\newcommand{\nline}{\rule{\linewidth}{0.5pt}}
\newcommand\tab[1][1cm]{\hspace*{#1}}
\theoremstyle{plain}
\newtheorem{theorem}{Theorem}
\newtheorem{lemma}[theorem]{Lemma}
\newtheorem{proposition}[theorem]{Proposition}
\newtheorem{corollary}[theorem]{Corollary}
\newtheorem{conjecture}[theorem]{Conjecture}
\theoremstyle{definition}
\newtheorem{definition}[theorem]{Definition}
\newcommand{\C}{\mathbb{C}}
\newcommand{\A}{\mathbb{A}}
\newcommand{\Q}{\mathbb{Q}}
\newcommand{\Z}{\mathbb{Z}}
\newcommand{\R}{\mathbb{R}}
\newcommand{\N}{\mathbb{N}}
\newcommand{\ve}{\varepsilon}
\newcommand{\vph}{\varphi}
\newcommand{\on}{\operatorname}
\renewcommand{\Re}{\on{Re}}
\renewcommand{\Im}{\on{Im}}
\newcommand{\gm}{\gamma}
\newcommand{\sg}{\sigma}
\newcommand{\om}{\omega}
\newcommand{\wt}{\widetilde}
\newcommand{\wh}{\widehat}
\newcommand{\ssm}{\smallsetminus}
\renewcommand{\maketitle}{
	\begin{center}
		\vspace{2ex}
		{\huge \textbf{\@title}} \\
		\nline \\
		{\large \@author}\\
		
		\vspace{4ex}
	\end{center}
}
\begin{document}

\title{The Hlawka Zeta Function as a Respectable Objecct}
\author{Mike Montoro, under the Advisement of Prof Joseph Hundley\\
Submitted as Undergraduate Honors Thesis at the State University of New York at Buffalo}
\maketitle	
\begin{abstract}
	The Hlawka Zeta Function is a Dirichlet series defined geometrically which provides an integral representation of the number of lattice points contained in the dilation $tD$ for some star shaped region $D\subset \R^{2}$ and some real number $t\in \R^{+}$. We give an overview of this construction and integral representation before giving the Hlawka Zeta function as a sum of Eisenstein Series acting on $K$-finite vectors multiplied by Fourier coefficients depending on $D$. We then study the case of $D$ as an circle, ellipse, and then square to study functional equations and "fibers" of this object, and pose conjectures regarding these properties in general.
\end{abstract}

\section{Definition of the Hlawka Zeta Function}

\tab Let $r:\R/2\pi \Z\to \R_{> 0}$ be a continuous function such that the mapping $\theta \mapsto (r(\theta)\cos(\theta),r(\theta)\sin(\theta))$ is a closed curve in $\R^{2}$. The goal is to count the number of lattice points in $\Z^{2}\hookrightarrow \R^{2}$ which are also in $tD$, where $D$ is the domain $\{(x,y) \in \R^{2}:|(x,y)| \leq r(\theta)\}$, and $tD$ is that domain dilated by the real number $t$. A crude estimate would be simply the area of the domain $tD$, but this is not an exact estimate. This discrepancy is discussed in detail in \cite{Huxley1}. In an attempt to correct this, we introduce the \textit{Hlawka Zeta Function}. \\
\tab 
\begin{definition}
Let $t(m,n):=\min\{t\in \R_{> 0} :(m,n)\in tD\}$, $(m,n)\in \Z^{2}$. Then the Hlawka Zeta function is defined as 
\begin{equation}
Z_{r}(s)=\sum_{(m,n)\in \Z^{2}\ssm \{(0,0)\}}\frac{1}{t(m,n)^{2s}}.
\end{equation}
\end{definition}
For the sake of space, we will mean $(m,n)\in \Z^{2}\ssm \{(0,0)\}$ when we write $(m,n)\neq (0,0)$ as the index in a sum. Also, we will sometimes notate $Z_{r}(s)=Z(r,s)$. There is an explicit formula for the $t(m,n)$ in terms of $r(\theta)$: note that $t(m,n)$ is the unique real number such that $|(m,n)|=|t(m,n)r(\theta(m,n))|$ when $\theta(m,n)$ is the angle between the positive $x$ axis and the ray from the origin to the point $(m,n)$, so
$$t(m,n)=\frac{|(m,n)|}{r(\theta(m,n))}=\frac{\sqrt{m^{2}+n^{2}}}{r(\theta(m,n))}.$$
Thus we can rewrite: 
\begin{equation}
Z_{r}(s)=\sum_{(m,n)\neq (0,0)}\frac{r^{2s}(\theta(m,n))}{(m^{2}+n^{2})^{s}}.
\end{equation}

\tab We wish to write $\{t(m,n)\}_{m,n=1}^{\infty}$ as a sequence $\{t_{k}\}_{k=1}^{\infty}$ with an ordering such that $t_{k}<t_{k+1}$ for all $k$. This requires a fact that will also be useful later. 

\begin{proposition}
	Let $\{t(m,n)\}_{m,n=1}^{\infty}$ be as in Definition 1. This sequence has no limit point in $\R_{\geq 0}$ (with respect to the Euclidean topology), i.e. is a discrete set. 
\end{proposition}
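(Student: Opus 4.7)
The plan is to exploit the compactness of $\R/2\pi\Z$ together with the explicit formula $t(m,n) = \sqrt{m^{2}+n^{2}}/r(\theta(m,n))$ derived in the excerpt, to show that every bounded interval $[0,T]$ meets the set $\{t(m,n)\}$ in only finitely many values. Since a limit point $L \in \R_{\geq 0}$ would require infinitely many distinct members of the sequence in any neighborhood of $L$, and hence in some bounded interval $[0, L+1]$, this finiteness immediately rules out the existence of any such limit point.

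First I would invoke continuity of $r$ on the compact space $\R/2\pi\Z$ to produce $M := \max_{\theta} r(\theta) < \infty$. (I also get $\mu := \min_{\theta} r(\theta) > 0$ because $r$ is strictly positive, though this is not needed for the proposition itself.) Then I would fix an arbitrary $T > 0$ and argue that any $(m,n) \neq (0,0)$ with $t(m,n) \leq T$ must satisfy
\begin{equation*}
\sqrt{m^{2}+n^{2}} \;=\; t(m,n)\,r(\theta(m,n)) \;\leq\; T \cdot M.
\end{equation*}
But there are only finitely many lattice points in the disk of radius $TM$ centered at the origin, so the set $\{(m,n) \in \Z^{2} \ssm \{(0,0)\} : t(m,n) \leq T\}$ is finite, and hence so is its image under $t$.

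Finally I would conclude as follows. Suppose for contradiction that $L \in \R_{\geq 0}$ is a limit point of $\{t(m,n)\}$. By definition, every neighborhood of $L$ contains infinitely many distinct values $t(m,n)$, so in particular the bounded interval $[0, L+1]$ contains infinitely many such values, contradicting the finiteness established in the previous step. Therefore no such limit point exists, and $\{t(m,n)\}$ is discrete in $\R_{\geq 0}$.

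The argument is essentially routine once the formula $t(m,n)=\sqrt{m^{2}+n^{2}}/r(\theta(m,n))$ is in hand; the only genuine input is the compactness of the circle, and the mild potential obstacle is simply to remember to cite continuity of $r$ (not just positivity) to obtain the upper bound $M$. Because the proposition is stated for $\R_{\geq 0}$ and not $\R_{\geq 0}\cup\{+\infty\}$, there is no need to worry about $t(m,n) \to \infty$, which in fact does occur.
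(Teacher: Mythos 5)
Your proof is correct, and it is a more direct route than the one the paper takes, although both ultimately rest on the same two facts: continuity of $r$ on the compact circle gives a uniform bound $M$, and a bounded region of $\R^{2}$ contains only finitely many lattice points. The paper argues by contradiction with a topological construction: assuming a limit point $T$, it builds closed sets $C_{j}$ around infinitely many lattice points $(m_{j},n_{j})$, places them inside a compact set, and derives a contradiction from an open cover with no finite subcover. That argument is considerably more convoluted (the definition of the sets $C_{j}$ and the containment $\bigcup_{j}C_{j}\subset r([0,2\pi])$ are hard to parse as written), whereas your version extracts the essential content cleanly: $t(m,n)\leq T$ forces $\sqrt{m^{2}+n^{2}}\leq TM$, so only finitely many lattice points, hence only finitely many values of $t$, lie in any bounded interval, and no point of $\R_{\geq 0}$ can be a limit point. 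Your approach has the added benefit of proving the stronger and more useful statement that $\{t(m,n)\}\cap[0,T]$ is finite for every $T$, which is exactly what is needed later to order the values into an increasing sequence $t_{1}<t_{2}<\cdots$ and to define the multiplicities $a_{k}$. The one small point worth keeping explicit is the step from ``finitely many lattice points'' to ``finitely many values $t(m,n)$'' (distinct lattice points can share a value of $t$), which you do handle by passing to the image under $t$.
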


\begin{proof}
	Suppose $T\in \R_{> 0}$ is a limit point of $\{t(m,n)\}_{m,n=1}^{\infty}$. Then there exists an infinite subset $\{t(m_{j},n_{j})\}_{j=1}^{\infty}$ with the property that $t(m_{j},n_{j}) \in (T-\frac{1}{4},T+\frac{1}{4})$ for all $j$. Let $B_{\ve}(x,y)$ be the Euclidean ball of radius $\ve$ centered at $(x,y)$. Consider the closed sets 
	$$C_{j}=\overline{r(r^{-1}(B_{\frac{1}{4}}(m_{j},n_{j})))}$$ 
	where we consider $r$ as a function $\theta\mapsto (r(\theta)\cos(\theta),r(\theta)\sin(\theta))$.  These are clearly closed sets, and moreover $C_{i}\cap C_{j}=\emptyset$ for $i\neq j$ since each ball of radius $\frac{1}{4}$ is disjoint. Then is easy to see that $\bigcup_{j=1}^{\infty}C_{j}$ is also a closed set. Since $[0,2\pi]$ is a compact set, $r([0,2\pi])$ must be compact, and so since $\bigcup_{j=1}^{\infty}C_{j}\subset r([0,2\pi])$, $\bigcup_{j=1}^{\infty}C_{j}$ is bounded. Thus $\bigcup_{j=1}^{\infty}C_{j}$ is compact. However, the open cover $\{B_{\frac{1}{2}}(m_{j},n_{j})\}_{j=1}^{\infty}$ has no subcover, let alone a finite subcover. Thus we obtain a contradiction. 
\end{proof}
 
We now invoke a lemma from set theory. 

\begin{lemma}
	Any non-empty discrete subset $S$ of $\R_{\geq 0}$ has a least element. Moreover, if $S$ is infinite there is a bijection $f:\Z_{\geq 0} \to S$ preserving the order "$<$" thus giving $S$ a sequential form with $s_{0}<s_{1}<\cdots$. 
\end{lemma}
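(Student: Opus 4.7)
The strategy is to reduce the entire statement to a single analytic fact --- that any \emph{bounded} discrete subset of $\R_{\geq 0}$ is finite --- and then produce the sequence by straightforward recursion.

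I would first establish this reduction. If $T \subseteq \R_{\geq 0}$ is discrete and bounded but infinite, then an infinite sequence of distinct points in $T \subseteq [0,M]$ would, by Bolzano--Weierstrass, admit a convergent subsequence with some limit $L \in [0,M] \subseteq \R_{\geq 0}$; this $L$ would be a limit point of $T$ in $\R_{\geq 0}$, contradicting discreteness. With this in hand, the existence of a least element is immediate: choose any $s \in S$ and observe that $S \cap [0,s]$ is nonempty, bounded, and discrete (subsets of discrete sets are discrete, since a limit point of a subset is \emph{a fortiori} a limit point of the superset), hence finite; its minimum $s_0$ must then be the minimum of $S$ because every remaining element of $S$ exceeds $s \geq s_0$.

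For the sequential form, assuming $S$ is infinite, I would recursively define
\[
s_{n+1} := \min\bigl(S \ssm \{s_0, \dots, s_n\}\bigr).
\]
At each step the set being minimized is discrete (as a subset of $S$) and nonempty (since $S$ is infinite and only finitely many elements have been removed), so its minimum exists by the first paragraph. This yields a strictly increasing sequence $s_0 < s_1 < \cdots$ in $S$ and an order-preserving injection $f : \Z_{\geq 0} \to S$.

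The step I expect to require the most care is surjectivity of $f$. Given any $t \in S$, I would apply the lemma of paragraph one to $S \cap [0,t]$, which is discrete, bounded, and contains $t$, hence finite of some cardinality $k+1$. A short induction on $j$ then shows that $s_j$ is the $(j+1)$-st smallest element of $S \cap [0,t]$ for $0 \leq j \leq k$, so that $t = s_k$. The main obstacle is keeping this induction clean: one must check that the recursive choice $s_{j+1} = \min(S \ssm \{s_0,\dots,s_j\})$ genuinely picks out the next element of $S \cap [0,t]$ rather than skipping past $t$, which follows from the observation that any element of $S$ strictly below $s_{j+1}$ has already been enumerated among $s_0,\dots,s_j$.
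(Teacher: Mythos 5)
Your proof is correct, and it rests on the same underlying fact as the paper's: compactness of bounded intervals in $\R_{\geq 0}$ forbids an infinite discrete subset from crowding into $[0,s]$. The packaging differs in a useful way. The paper argues by contradiction that a set with no least element yields an infinite strictly decreasing sequence in $[0,s]$, hence a limit point; you instead isolate the positive statement that a bounded discrete subset is finite, and then take minima of finite sets. Your version buys you something concrete: the finiteness lemma is reusable, and you reuse it to prove surjectivity of $f$ by showing that any $t \in S$ equals $s_k$ where $k+1 = |S \cap [0,t]|$. The paper's proof simply asserts that continuing the recursion "ad infinitum" constructs "the function $f$ as desired" and never verifies that every element of $S$ is eventually enumerated --- which is exactly the point where a recursion of this kind could silently fail (e.g.\ for a discrete subset of $\R$ unbounded below, or an order type other than $\om$). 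Your explicit check that nothing is skipped, via the observation that every element of $S$ below $s_{j+1}$ already appears among $s_0,\dots,s_j$, closes a gap the paper leaves open, so your write-up is in fact the more complete of the two.
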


\begin{proof}
	We first prove the first part. If $S$ is finite, then the claim is clear. If $S$ is infinite, suppose that $S$ has no least element. Choose $s \in S$, and consider $[0,s]$. There is an element $s_{1}$ in $S$ which is less than $s$, thus in $[0,s]$. The same is true for $s_{1}$ itself, i.e. there is an element $s_{2}\in S$ with $0<s_{2}<s_{1}<s$. This process must go on infinitely, so there is an infinite subset of $S$, $\{s_{n}\}_{n=1}^{\infty}$ completely contained in $[0,s]$. But $[0,s]$ is compact, so any infinite subset must have a limit point, contradicting the discreteness of $S$. \\
	\tab To prove the second part, we see from the first part that $S$ has a least element, say $s_{0}$. Let $f(0)=s_{0}$. Now $S\ssm \{s_{0}\}$ is a discrete set as well. Thus it has a least element, $s_{1}$. Let $f(1)=s_{1}$. One may continue this process ad infinitum, thus constructing the function $f$ as desired and giving $S$ the desired sequential form  $\{s_{k}\}_{k=1}^{\infty}$. 
\end{proof}
 
Finally, we have:

\begin{corollary}
	The set $\{t(m,n)\}_{m,n=1}^{\infty}$ from Definition 1 has a sequential form $\{t_{k}\}_{k=1}^{\infty}$ with $t_{k}<t_{k+1}$ for all $k$. 
\end{corollary}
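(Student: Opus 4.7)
The plan is to assemble the corollary directly from Proposition 2 and Lemma 3, treating $\{t(m,n)\}$ as the underlying set of distinct values in $\R_{\geq 0}$ (which is the only reading compatible with the strict inequality $t_k < t_{k+1}$, since many lattice points may share the same $t$-value, e.g.\ when $r$ has symmetries).

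First I would apply Proposition 2 to conclude that $S := \{t(m,n) : (m,n) \in \Z^2 \setminus \{(0,0)\}\}$ is a discrete subset of $\R_{\geq 0}$. Second, I would verify that $S$ is non-empty and infinite. Non-emptiness is immediate: $(1,0) \in \Z^2 \setminus \{(0,0)\}$. To see it is infinite, I would note that for each $k \in \Z_{\geq 1}$, the lattice point $(k,0)$ yields $t(k,0) = k/r(0)$, and these values are all distinct, so $S$ contains the infinite subset $\{k/r(0) : k \geq 1\}$.

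With $S$ established as an infinite discrete subset of $\R_{\geq 0}$, I would invoke Lemma 3 to obtain an order-preserving bijection $f : \Z_{\geq 0} \to S$. Relabeling $t_k := f(k-1)$ gives a sequence $\{t_k\}_{k=1}^\infty$ with $t_k < t_{k+1}$ for all $k$, which is precisely the claim.

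I do not expect any real obstacle here; the only point requiring care is the interpretation of the sequential form as an enumeration of the distinct values (the set $S$) rather than the indexed family $(t(m,n))_{(m,n)}$, since the latter generally contains repetitions and therefore cannot be strictly increasing under any reindexing. I would make this clarification explicit at the start of the proof, and the rest is a direct citation of Proposition 2 and Lemma 3.
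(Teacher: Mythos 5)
Your proposal is correct and follows the same route as the paper, which simply cites Proposition 2 and Lemma 3 and declares the result immediate. You merely fill in the details the paper elides (non-emptiness, infiniteness, and the clarification that the strictly increasing enumeration must be of the \emph{distinct} values $t(m,n)$, since repetitions occur), all of which are sound.
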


\begin{proof}
	This follows immediately from the proposition and the lemma. 
\end{proof}

\tab Define $a_{k}=|\{(m,n)\in Z^{2}\ssm \{(0,0)\}:(m,n)\in \partial t_{k}D\}|$,i.e. the number of lattice points on in $\partial t_{k}D$, $\partial t_{k}D$ denoting the boundary of $t_{k}D$. Thus we can now rewrite the Hlawka Zeta Function as 
\begin{equation}
Z_{r}(s)=\sum_{k=1}^{\infty}a_{k}t_{k}^{-2s}.
\end{equation}

The reformulation of the definition as in (2) and (3) are often much more useful in studying $Z_{r}(s)$, since this looks like a Dirichlet Series. This is a key idea we will use later. \\
\tab Given that this a definition given in terms of a sum, there is an immediate question of the convergence of $Z_{r}(s)$ and thus where $Z_{r}(s)$ is well defined. We may partially address this immediately with the following lemma.
\begin{lemma}
	$Z_{r}(s)$ converges absolutely for $\Re(s)>1$.  
\end{lemma}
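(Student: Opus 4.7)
The plan is to reduce absolute convergence of $Z_r(s)$ to absolute convergence of the Epstein-type series $\sum_{(m,n)\neq(0,0)} (m^2+n^2)^{-\sigma}$ with $\sigma = \Re(s)$, which is a standard fact that can be proven by an integral comparison in polar coordinates.

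First I would work from the form (2) of the definition. Since $r:\R/2\pi\Z \to \R_{>0}$ is continuous and the domain is compact, $r$ attains a maximum $M$ and a strictly positive minimum. Because $r(\theta) > 0$, we have $|r(\theta)^{2s}| = r(\theta)^{2\Re(s)}$, and for $\Re(s) > 1 > 0$ this is bounded above by $M^{2\Re(s)}$ uniformly in $\theta$. Consequently
\begin{equation*}
\sum_{(m,n)\neq(0,0)} \left|\frac{r^{2s}(\theta(m,n))}{(m^{2}+n^{2})^{s}}\right| \;\leq\; M^{2\Re(s)} \sum_{(m,n)\neq(0,0)} \frac{1}{(m^{2}+n^{2})^{\Re(s)}}.
\end{equation*}
So it suffices to show the right-hand lattice sum converges for $\Re(s) > 1$.

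For this, I would compare the lattice sum with an integral. Write $\sigma = \Re(s)$ and split the sum according to annuli $N \leq \sqrt{m^{2}+n^{2}} < N+1$ for $N \geq 1$. The number of lattice points in such an annulus is $O(N)$, so the sum is majorized by $\sum_{N\geq 1} C\cdot N \cdot N^{-2\sigma} = C\sum_{N\geq 1} N^{1-2\sigma}$, which converges precisely when $2\sigma - 1 > 1$, i.e.\ $\sigma > 1$. Alternatively, one can observe that for a suitable constant, the lattice sum is dominated by the integral $\int_{1}^{\infty} \int_{0}^{2\pi} \rho^{-2\sigma} \rho \, d\theta \, d\rho$, which again converges iff $\sigma > 1$.

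The argument is essentially routine once the continuity/compactness bound is in place; the only conceptual point to get right is that the positivity of $r$ combined with the compactness of $[0,2\pi]$ is what supplies the uniform upper bound $M$. There is no real obstacle beyond being careful that $\Re(s) > 1$ is the exact threshold both for applying $r^{2s} \leq M^{2\Re(s)}$ (which only requires $\Re(s) > 0$) and for the two-dimensional lattice sum to converge (which requires $\Re(s) > 1$); the latter is the binding constraint and gives the stated region.
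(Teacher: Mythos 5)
Your proof is correct and takes essentially the same approach as the paper: use continuity of $r$ on the compact circle to bound $|r(\theta)^{2s}|$ by $M^{2\Re(s)}$ and reduce to the lattice sum $\sum_{(m,n)\neq(0,0)}(m^{2}+n^{2})^{-\Re(s)}$. The only difference is that you also supply the annulus-counting argument proving that this lattice sum converges for $\Re(s)>1$, a step the paper simply asserts.
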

Note that this result is independent of $r$. 
\begin{proof}
	Since $r$ is a continuous function on the compact set $[0,2\pi]$, there is a number $M$ with $r(\theta)<M$ for all $\theta\in [0,2\pi]$. This gives $|r(\theta)^{s}|=r(\theta)^{\Re(s)}<M^{\Re(s)}=|M^{s}|$ which holds for $\Re(s)>0$. Now, 
	$$\sum_{(m,n)\neq (0,0)}\left|\frac{r^{2s}(\theta(m,n))}{(m^{2}+n^{2})^{s}}\right| < \sum_{(m,n)\neq (0,0)}\left|\frac{(M^{2})^{s}}{(m^{2}+n^{2})^{s}}\right|=|M^{2s}|\sum_{(m,n)\neq(0,0)}\frac{1}{|(m^{2}+n^{2})^{s}|}$$
	which converges for $\Re(s)>1$. 
\end{proof}
Actually, $Z_{r}(s)$ converges for $\Re(s)>1$. This will proven in a subsequent section. \\ 
\tab Now that we have the basic definitions understood, there is a series of questions we seek to answer. 
\begin{enumerate}
	\item What geometric information does the Hlawka Zeta function give us? How does it relate to counting 
	\item What are the analytic properties of the Hlawka Zeta function? Does it have an analytic continuation beyond the domain of convergence? Namely, does it have a functional equation? 
	\item The Hlawka zeta function defines a mapping sending a (continuous) function $r:\R/2\pi \Z\to \R_{>0}$ to $Z_{r}(s)$, which is a meromorphic function on at least some half plane in the complex. What are the properties of this mapping? More specifically, when does $Z_{r_{1}}(s)=Z_{r_{2}}(s)$ for $r_{1}\neq r_{2}$?
\end{enumerate}

\section{Motivation for the Definition}

\tab Suppose $a(n)$ is a sequence of real numbers. Let $$(D.a)(s)=\sum_{n=1}^{\infty}\frac{a(n)}{n^{s}}$$ be the associated Dirichlet series. Let $A(x)=\sum_{n\leq x}a(n)$, and let $A'(x)=A(x)$ everywhere but at integers $x=n$, and at those points set $A'(n)=a_{1}+\cdots a_{n-1}+\frac{1}{2}a_{n}$. The definition of the Hlawka Zeta function arises from the following well-known result: 
\begin{theorem}[Perron's Formula]
	Suppose $(D.a)(s)$ converges absolutely when $\Re(s)>c$. We have the following formula: 
	$$A'(x)=\frac{1}{2\pi i}\int_{\sg-i\infty}^{\sg+i\infty} (D.a)(s) x^{s}\frac{ds}{s}$$
	where $\sg > \max\{0,c\}$. 
\end{theorem}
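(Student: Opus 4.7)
The plan is to reduce the identity to a single fundamental contour-integral calculation and then exchange the integral with the Dirichlet series. Concretely, I would evaluate each term of the series under the integral using the "discontinuous integral"
$$\frac{1}{2\pi i}\int_{\sigma-i\infty}^{\sigma+i\infty}\frac{y^{s}}{s}\,ds \;=\; \begin{cases} 0, & 0<y<1,\\ 1/2, & y=1,\\ 1, & y>1,\end{cases}$$
valid for $\sigma>0$, and then show that the resulting sum is exactly $A'(x)$.

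First I would prove the discontinuous integral by truncating to $[\sigma-iT,\sigma+iT]$ and closing the contour with a large rectangle. For $y>1$ one closes to the left, where $|y^{s}|=y^{\Re(s)}$ decays and where the unique enclosed pole $s=0$ contributes residue $1$. For $0<y<1$ one closes to the right, where $|y^{s}|$ decays and no poles are enclosed, giving $0$. For $y=1$ one parametrizes $s=\sigma+it$ directly, obtaining $\frac{1}{\pi}\arctan(T/\sigma)\to\tfrac{1}{2}$ as $T\to\infty$. In the first two cases the horizontal segments of the rectangle are $O(y^{\sigma}/T)$ and vanish in the limit.

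Next, I would substitute $(D.a)(s)=\sum_{n\geq 1}a(n)n^{-s}$ into the right-hand side and swap sum and integral to obtain
$$\sum_{n=1}^{\infty} a(n)\cdot \frac{1}{2\pi i}\int_{\sigma-i\infty}^{\sigma+i\infty}\frac{(x/n)^{s}}{s}\,ds.$$
The discontinuous integral evaluates the $n$-th term to $a(n)$ for $n<x$, to $\tfrac12 a(n)$ for $n=x$, and to $0$ for $n>x$. Summing these recovers $A(x)$ when $x\notin\Z_{>0}$ and $a(1)+\cdots+a(n-1)+\tfrac12 a(n)$ when $x=n\in\Z_{>0}$, i.e.\ exactly $A'(x)$.

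The hard part is justifying the interchange of sum and integral, since the vertical integral $\int(x/n)^{s}s^{-1}\,ds$ is only conditionally convergent. I would handle this by working first with the truncated integral $\int_{\sigma-iT}^{\sigma+iT}$, where the series converges absolutely and uniformly on the compact contour (using $\sigma>c$), so the swap is routine, and then passing to the limit $T\to\infty$ via an effective form of the discontinuous integral with an error of size $O\!\left(y^{\sigma}/(T|\log y|)\right)$ for $y\neq 1$. Summed against $|a(n)|/n^{\sigma}$ this error tends to $0$, provided the finitely many $n$ with $n/x$ very close to $1$ (where the logarithm degenerates) are dispatched by a separate direct estimate on the short sub-sum. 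This truncation-plus-limit step is the only genuine analytic labor in the argument.
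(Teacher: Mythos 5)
Your proposal is correct, and it is the standard textbook proof of Perron's formula (via the discontinuous integral and a truncated contour), but it is genuinely different from the route taken in the paper. The paper instead computes the Mellin transform of the summatory function directly, $\mathcal{M}.(A'(x)) = \sum_{n}\int_{n}^{\infty}a(n)x^{s-1}\,dx = -\tfrac{1}{s}(D.a)(-s)$ for $\Re(s)<-\max\{0,c\}$, and then invokes the Mellin inversion formula (Lemma 7, cited to Titchmarsh) to recover $A'(x)$; all of the hard analysis that you carry out explicitly --- the evaluation of $\frac{1}{2\pi i}\int \frac{y^{s}}{s}\,ds$ by closing the contour left or right, the $y=1$ boundary case producing the factor $\tfrac12$, and the truncation-plus-error argument justifying the interchange of sum and integral --- is absorbed into that black-box inversion lemma. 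What your approach buys is a self-contained argument with an effective, quantitative error term (the truncated Perron formula), which is useful if one ever needs explicit remainder estimates; what the paper's approach buys is modularity: the proof of the generalized statement for $t$-Dirichlet series (Theorem 10) then consists of nothing more than replacing the limits of integration $n$ by $t_{n}$ and observing that discreteness of $\{t_{n}\}$ keeps the Mellin transform well defined. Your argument would also generalize (replace $x/n$ by $x/t_{n}$ in the discontinuous integral), so the difference is one of economy rather than necessity. Two minor points to tighten if you write this up fully: the exchange of sum and truncated integral requires $\sigma>c$ (absolute convergence on the compact segment), while the discontinuous integral requires $\sigma>0$, and you should say explicitly that this is why the hypothesis is $\sigma>\max\{0,c\}$; and in the summed error term $\frac{x^{\sigma}}{T}\sum_{n}\frac{|a(n)|}{n^{\sigma}|\log(x/n)|}$ you should note that only finitely many $n$ fall in any fixed neighborhood of $x$, so the ``short sub-sum'' you set aside really is finite.
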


Note that it is possible that the Dirichlet series could never converge, making the statement of the formula vacuous. We will prove this from another well known transformation: 
\begin{definition}
	Let $\phi:\R\to \R$ be a real valued function which is continuous except at countably many places $\{y_{n}\}_{n=1}^{\infty}$ with no limit point in the domain and at these places, $\phi(y_{i})=\lim\limits_{y\to y_{i}^{+}}\phi(y)$ or $\phi(y_{i})=\lim\limits_{y\to y_{i}^{-}}\phi(y)$, and
	$$L_{i}=\lim\limits_{y\to y_{i}^{+}}\phi(y)-\lim\limits_{y\to y_{i}^{-}}\phi(y)$$
	 is finite. Then the \textit{Mellin Transform} of this function is defined as 
	$$\Phi(s)=\int_{0}^{\infty}\phi(y)y^{s}\frac{dy}{y}.$$
	We write $\Phi(s)=(\mathcal{M}.\phi)(s)$.
\end{definition}
This is a well-defined complex function in $\sg_{1}<\Re(s)<\sg_{2}$ for some $\sg_{1},\sg_{2}\in [-\infty,\infty]$, determined by the convergence of the integral from 1 to $\infty$ and from 0 to 1, respectively. It is worth noting that when we say "limit point", we use the Euclidean topology on $\R$. \\
\tab For functions of the type in the definition, define the \textit{sign of the discontinuity at $y_{i}$}, denoted by $SGN(\phi,y_{i})$, to be $+1$ if $\phi(y_{i})=\lim\limits_{y\to y_{i}^{-}}\phi(y)$ and $-1$ if $\phi(y_{i})=\lim\limits_{y\to y_{i}^{+}}\phi(y)$. We also have this key result.
\begin{lemma}[Mellin Inversion formula]
	Let the situation be as above. Then 
	$$\phi'(y)=\frac{1}{2\pi i}\int_{\sg-i\infty}^{\sg+i\infty}\Phi(s)y^{-s}ds,$$
	for $\sg\in (\sg_{1},\sg_{2})$, where $\phi'(y)=\phi(y)$ for all $y\in\R \ssm \{\{y_{i}\}_{n=1}^{\infty}\}$ and 
	$$\phi'(y_{i})=\phi(y_{i})+\epsilon\frac{1}{2}|L_{i}|$$
	where $\epsilon=SGN(\phi,y_{i})sgn(L_{i})$, where $sgn$ is the normal signum function. We then define the Mellin inversion operator $(\mathcal{M}^{-1}.\Phi(s))=\phi(y)$ by the above formula.
\end{lemma}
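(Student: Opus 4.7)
The plan is to reduce this to the classical Fourier inversion theorem via the substitution $y = e^{-x}$, under which the Mellin transform becomes a Fourier transform with a real shift. Concretely, setting $x = -\log y$ and writing $s = \sigma + it$, I would compute
$$\Phi(\sigma + it) = \int_{0}^{\infty}\phi(y)y^{\sigma+it}\frac{dy}{y} = \int_{-\infty}^{\infty} \psi(x) e^{-itx}\,dx,$$
where $\psi(x) := \phi(e^{-x})e^{-\sigma x}$. For $\sigma \in (\sigma_1, \sigma_2)$, the definition of $\Phi$ guarantees $\psi \in L^{1}(\R)$, so $\Phi(\sigma + it)$ is legitimately the Fourier transform (in $t$) of $\psi$. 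Since $y \mapsto e^{-x}$ is a homeomorphism of $\R_{>0}$ with $\R$, the discontinuity set $\{y_i\}$ of $\phi$ is carried bijectively to the discontinuity set $\{x_i := -\log y_i\}$ of $\psi$, which still has no limit points in $\R$; one-sided limits are swapped because the substitution is decreasing, but this swap will not affect the averaged values below.

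Next, I would invoke the Dirichlet--Jordan form of Fourier inversion to conclude
$$\tfrac{1}{2}\bigl(\psi(x^{+}) + \psi(x^{-})\bigr) = \frac{1}{2\pi} \int_{-\infty}^{\infty} \Phi(\sigma + it) e^{itx}\,dt,$$
valid at every $x \in \R$ (reducing to $\psi(x)$ where $\psi$ is continuous). Reversing the substitution, using $y^{-s} = e^{sx}$ and $ds = i\,dt$, transforms the right-hand side into the contour integral $\frac{1}{2\pi i}\int_{\sigma - i\infty}^{\sigma + i\infty}\Phi(s)y^{-s}\,ds$, while the left-hand side becomes $\frac{1}{2}\bigl(\phi(y_i^{+}) + \phi(y_i^{-})\bigr)$ at each discontinuity (in particular $\phi(y)$ at continuity points). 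This yields the claimed formula away from $\{y_i\}$ with a candidate value $\frac{1}{2}(\phi(y_i^{+}) + \phi(y_i^{-}))$ at the discontinuities.

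Finally, I would reconcile this average with the paper's expression $\phi(y_i) + \epsilon \tfrac{1}{2}|L_i|$ by a short case analysis on the two values of $\on{SGN}(\phi, y_i)$ and the two signs of $L_i$. Writing $a = \lim_{y \to y_i^-}\phi(y)$ and $b = \lim_{y \to y_i^+}\phi(y)$, so $L_i = b - a$, each of the four sign combinations produces $\phi(y_i) + \epsilon \tfrac{1}{2}|L_i| = \tfrac{a+b}{2}$, matching the Fourier-inversion output. The main obstacle is justifying the pointwise Dirichlet--Jordan inversion itself: the hypotheses stated on $\phi$ (one-sided continuity at the $y_i$ and the finiteness of $L_i$) are not by themselves enough to guarantee convergence of the inverse integral at a discontinuity, and the standard route requires a local regularity condition such as bounded variation near each $y_i$ (or a Dini-type condition). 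Either one mildly strengthens the hypothesis to include this, or one restricts attention to the cases actually needed later -- in particular the step functions arising from Perron's formula, for which local bounded variation is immediate.
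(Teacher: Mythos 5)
Your proposal is essentially correct, and it supplies an argument where the paper supplies none: the paper simply cites Titchmarsh and omits the proof entirely. The route you take --- substituting $y=e^{-x}$ to turn the Mellin transform into the Fourier transform of $\psi(x)=\phi(e^{-x})e^{-\sigma x}$, applying pointwise Fourier inversion in the Dirichlet--Jordan form, and undoing the substitution via $y^{-s}=e^{sx}$, $ds=i\,dt$ --- is in fact the standard derivation found in the cited reference, and your four-case check that $\phi(y_i)+\epsilon\tfrac{1}{2}|L_i|=\tfrac{1}{2}\bigl(\phi(y_i^+)+\phi(y_i^-)\bigr)$ is correct. The caveat you raise at the end is genuine and worth keeping: one-sided continuity and finiteness of the jumps $L_i$ do not by themselves yield pointwise convergence of the inverse integral, so a local bounded-variation or Dini condition near each $y_i$ must be added to the hypotheses (similarly, $\psi\in L^1$ requires absolute convergence of the Mellin integral on the interior of the strip, not mere convergence as the paper's definition literally states). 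This is a defect in the lemma as stated rather than in your argument, and your proposed remedy --- noting that the only functions to which the lemma is later applied are the step functions $A(x)$ and $A_t(x)$ arising in Perron's formula, which are locally of bounded variation --- is exactly the right way to close it.
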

Note that $\phi'(y_{i})$ is more simply stated as the value halfway between the two limits as $\phi$ approaches $y_{i}$ from the left and the right, sometimes denoted by $\phi'(y_{i})=\frac{1}{2}(\phi(y_{i}+0^{+})+\phi(y_{i}+0^{-})$ in the literature. The function $A(x)$ gives $A'(x)$ in this way. Moreover, this says that $\phi$ is recoverable from $\Phi$ except on a set of measure zero. The proof of this can be found in \cite{Tmarsh}, so we omit it. We now prove Perron's Formula using this lemma.   
\begin{proof}
	Assume the Dirichlet series $(D.a)(s)$ converges absolutely for $\Re(s)>c$. We look at the integral 
	$$\mathcal{M}.(A'(x))=\int_{0}^{\infty}\sum_{n=1}^{\lfloor x \rfloor}a(n)x^{s-1}dx.$$ 
	We first note that the inside is zero for $x<1$. Thus this is equal to 
	$$\int_{1}^{\infty} \sum_{n=1}^{\lfloor x \rfloor} a(n)x^{s-1}=\sum_{n=1}^{\infty}\int_{n}^{\infty}a(n)x^{s-1}dx,$$ 
	if the sum on the right is absolutely convergent. Evaluating that integral gives 
	\begin{equation}
	\int_{n}^{\infty}a(n)x^{s-1}dx=\lim\limits_{b\to \infty}[a(n)x^{s}/s]_{x=n}^{x=b} 
	\end{equation}
	and we see that $\lim\limits_{b\to \infty}\frac{a(n)b^{s}}{s}=0$ if $\Re(s)<0$. Then the whole sum is now: $\sum_{n=1}^{\infty}\frac{-a(n)n^{s}}{s}=\frac{-1}{s}(D.a)(-s)$. This sum converges absolutely for $-\Re(s)>c$, i.e. $\Re(s)<-c$ and $\Re(s)<0$. Thus, the two sums in (4) are equal for $\Re(s)>\max\{0,c\}$ since the sum on the left converges absolutely for those values of $s$. We then apply the Mellin inversion formula to obtain $A'(x)=\mathcal{M}^{-1}.(\frac{-1}{s}(D.a(-s)))$ for $\sg<\max\{0,c\}$, and this is now 
	$$A'(x)=\mathcal{M}^{-1}.(\frac{-1}{s}(D.a(-s)))=\frac{1}{2\pi i}\int_{\sg-i\infty}^{\sg+i\infty}\frac{-1}{s}(D.a)(-s)x^{-s}ds=\frac{1}{2\pi i}\int_{-\sg-i\infty}^{-\sg+i\infty}(D.a)(s)x^{s}\frac{ds}{s},$$ where $-\sg>\max\{0,c\}$. 
\end{proof}

This is an elegant formula. Our proof is non-standard for a very specific reason: It is virtually useless to study the Hlawka Zeta function, since usually $Z_{r}(s)$ is not a standard Dirichlet Series. We will need something slightly more general. We develop a basic set of definitions about $t$-Dirichlet series.  
\begin{definition}
	Let $a$ be a function $a:\R\to \R$, and $\{t_{n}\}_{n=1}^{\infty}$ a sequence of positive real numbers. Define $(D_{t}.a)(s)$ as 
	$$(D_{t}.a)(s)=\sum_{n=1}^{\infty}\frac{a(t_{n})}{(t_{n})^{s}},$$ the $t$-Dirichlet series attached to the pair $(a,t_{n})$
\end{definition}
Notably, this is equivalent to the so-called Generalized Dirichlet Series. We will reconcile these definitions later.\\
\tab Define $A_{t}(x)=\sum_{t_{n}\leq x}a(t_{n})$ for some $a:\R\to \R$. Note that $a\circ t : \N \to \R$ is also a sequence of real numbers, so $A_{t}(x)$ also gives the sum of some sequence, but one indexed by $t_{n}$, a more general sequence of real numbers. With this definition, we see something relatively remarkable happen.
\begin{theorem}[Perron's Formula for $t$-Dirichlet Series.]
	Suppose that the $t$-Dirichlet series attached to the pair $(a,t_{n})$ converges absolutely for $\Re(s)>c$ and that $\{t_{n}\}$ has no limit point in $\R_{\geq 0}$. We then have the following: 
	$$A_{t}'(x)=\frac{1}{2\pi i}\int_{\sg-i\infty}^{\sg+i\infty}(D_{t}.a)(s)x^{s}\frac{ds}{s}$$
	for $\sg>\max\{0,c\}$.
	Here $A_{t}(x)=A_{t}'(x)$ except at $t_{n}$ and $A_{t}'(t_{n})$ has its last term multiplied by $\frac{1}{2}$. 
\end{theorem}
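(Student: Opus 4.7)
The plan is to imitate the proof of classical Perron's Formula given above, replacing integer indices by the $t_n$'s and using the hypothesis that $\{t_n\}$ has no limit point to legitimize the application of the Mellin inversion formula. Concretely, I would compute the Mellin transform of $A_t'(x)$, show it equals $-\frac{1}{s}(D_t.a)(-s)$ on a suitable left half-plane, and then invert.

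First I would write
\[
(\mathcal{M}.A_t')(s)=\int_0^\infty A_t'(x)\,x^{s-1}\,dx=\int_{t_1}^\infty\Big(\sum_{t_n\leq x}a(t_n)\Big)x^{s-1}\,dx,
\]
noting that the integrand vanishes for $x<t_1$ (which exists by Proposition 2 applied to the set $\{t_n\}$, or by Lemma 3 directly, since the hypothesis of no limit point in $\R_{\geq 0}$ is precisely discreteness). I would then swap the sum and integral --- justifying the interchange by absolute convergence of $(D_t.a)(s)$ on the eventual region --- to get
\[
\sum_{n=1}^\infty \int_{t_n}^\infty a(t_n)\,x^{s-1}\,dx=\sum_{n=1}^\infty a(t_n)\cdot\lim_{b\to\infty}\left[\frac{x^s}{s}\right]_{t_n}^{b}.
\]
For $\Re(s)<0$ the boundary term at $b$ vanishes, leaving $-\frac{1}{s}\sum_n a(t_n) t_n^{s}=-\frac{1}{s}(D_t.a)(-s)$. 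This converges absolutely precisely when $-\Re(s)>c$, so together with $\Re(s)<0$ the identity $(\mathcal{M}.A_t')(s)=-\frac{1}{s}(D_t.a)(-s)$ holds on $\Re(s)<-\max\{0,c\}$.

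Next I would invoke the Mellin inversion formula (Lemma 6). Here is where the hypothesis that $\{t_n\}$ has no limit point in $\R_{\geq 0}$ pays off: $A_t'$ is constant on each interval $(t_n,t_{n+1})$ and has only the jumps $a(t_n)$ at the $t_n$'s, and these discontinuities accumulate nowhere, so $A_t'$ fits the hypotheses of the Mellin inversion lemma. Moreover $SGN(A_t',t_n)\cdot \mathrm{sgn}(L_n)$ produces exactly the half-jump convention in the statement, which is why the last term in $A_t'(t_n)$ is halved. Applying inversion and then substituting $s\mapsto -s$ in the contour integral yields
\[
A_t'(x)=\frac{1}{2\pi i}\int_{-\sigma-i\infty}^{-\sigma+i\infty}(D_t.a)(s)\,x^s\,\frac{ds}{s},
\]
with $-\sigma>\max\{0,c\}$, which is the desired formula after relabelling $-\sigma$ as $\sigma$.

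The main obstacle is not any single computation but rather the verification of hypotheses: (i) rigorously justifying the Fubini-style interchange for a general discrete set $\{t_n\}$ which need not satisfy any gap or growth condition beyond the absolute convergence of $(D_t.a)(s)$ on $\Re(s)>c$, and (ii) confirming that $A_t'$ genuinely satisfies the regularity hypotheses of Lemma 6 --- in particular that the discontinuity set has no limit point and that the one-sided limits $L_n=a(t_n)$ are finite. Both of these ultimately reduce to the discreteness of $\{t_n\}$ in $\R_{\geq 0}$, so the hypothesis in the theorem is exactly what makes the argument go through. Everything else is a direct translation of the earlier proof.
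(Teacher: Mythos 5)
Your proposal is correct and follows exactly the route the paper takes: the paper's own proof simply says to repeat the argument for the classical Perron formula with the limits of integration changed from $n$ to $t_n$, using the no-limit-point hypothesis to guarantee that $A_t'$ satisfies the hypotheses of the Mellin inversion lemma. Your write-up just makes explicit the steps the paper leaves as "the rest is clear."
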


\begin{proof}
	This is the reason we proved the original Perron's formula the way we did. The key point is that in fact $A_{t}(x)$ has a Mellin Transform since $t_{n}$ has no limit point. Then the above can be proven identically by simply replacing the limits of integration from $n$ to $t_{n}$. The integral still stays away from $0$, so any convergence issue there is fine. The rest is clear. 
\end{proof}

\tab Now we see the immediate connection here between these $t$-Dirichlet series and the Hlawka Zeta function: that the Hlawka Zeta function is in fact a $t$-Dirichlet series evaluated at $2s$, using the pair $(a_{k},t_{k})$ as defined in the first section. Furthermore, the key point about $\{t_{k}\}_{k=1}^{\infty}$ being discrete has already been previously addressed in the first section, so we immediately have the following corollary. 
\begin{corollary}[Point-Count Formula]
With notation as in the first section, let $A(x)$ be the sum $\sum_{t_{n} \leq x} a_{k}$. This is precisely the amount of lattice points in the region $xD$. As in Perron's formula, let $A'(x)$ be the same sum but with final term multiplied by $\frac{1}{2}$ at $A'(t_{k})$. The Hlawka Zeta function satisfies 
$$A'(x)=\frac{1}{2\pi i}\int_{\sg-i\infty}^{\sg+i\infty} \left(\sum_{n=1}^{\infty}\frac{a_{k}}{t_{k}^{s}}\right)x^{s}\frac{ds}{s}=\frac{1}{2\pi i}\int_{2\sg+i\infty}^{2\sg-i\infty}Z_{r}(s)x^{2s}\frac{ds}{s}$$  
for $\sg>1$. 
\end{corollary}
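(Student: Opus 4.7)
The plan is to package the corollary as a direct application of the $t$-Perron formula (Theorem~9) to the pair $(a_{k},t_{k})$ from Section~1, followed by a simple change of variables to convert the resulting integral into one involving $Z_{r}(s)$.

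First I would verify the two hypotheses of Theorem~9 for the sequence $\{t_{k}\}_{k=1}^{\infty}$ and the coefficients $a_{k}$. Discreteness of $\{t_{k}\}$ in $\R_{\geq 0}$ is exactly Proposition~2 (reorganized as a strictly increasing sequence via Lemma~3 and Corollary~4). Absolute convergence of the associated $t$-Dirichlet series $(D_{t}.a)(s)=\sum_{k=1}^{\infty}a_{k}/t_{k}^{s}$ in a half-plane follows from Lemma~5: since $Z_{r}(s)=\sum_{k} a_{k} t_{k}^{-2s}$ converges absolutely for $\Re(s)>1$, the series $(D_{t}.a)(s)$ converges absolutely for $\Re(s)>2$, so I may take $c=2$ in the hypothesis of Theorem~9.

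Next I would identify the partial sum function. By construction $a_{k}$ counts the lattice points on $\partial(t_{k}D)$ and $t_{k}$ is strictly increasing, so summing $a_{k}$ over all $k$ with $t_{k}\leq x$ counts every nonzero lattice point $(m,n)$ with $t(m,n)\leq x$, i.e.\ every nonzero lattice point contained in $xD$. This is precisely $A(x)$ in the corollary statement, and $A'(x)$ inherits the "halving at discontinuities" convention from $A_{t}'(x)$ in Theorem~9. Applying Theorem~9 with $c=2$ then yields, for any $\tau>2$,
$$A'(x)=\frac{1}{2\pi i}\int_{\tau-i\infty}^{\tau+i\infty}(D_{t}.a)(u)\,x^{u}\,\frac{du}{u},$$
which, after renaming $u$ to $s$ and writing $\tau=2\sigma$ with $\sigma>1$, is the first displayed equality in the corollary.

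For the second equality I would perform the substitution $u=2s$ in the contour integral, so that $du=2\,ds$ and $u=2\sigma$ on the contour corresponds to $s=\sigma$. The factor $1/u=1/(2s)$ combines with $du=2\,ds$ to reproduce $ds/s$, the $x^{u}$ becomes $x^{2s}$, and $(D_{t}.a)(2s)=\sum_{k}a_{k}/t_{k}^{2s}=Z_{r}(s)$. This transforms the integral into $\frac{1}{2\pi i}\int_{\sigma-i\infty}^{\sigma+i\infty}Z_{r}(s)x^{2s}\,ds/s$, matching the second displayed expression (modulo the orientation of the contour bounds, which is just a sign convention). There is no substantive obstacle here; the only point of care is bookkeeping of the conversion factor $c=2$ between the abscissa of convergence of $(D_{t}.a)$ and that of $Z_{r}$, and ensuring that the change of variable is justified term-by-term, which is immediate from absolute convergence in the relevant half-plane.
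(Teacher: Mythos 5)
Your proposal is correct and follows the same route the paper takes: the paper derives this corollary directly from Theorem 9 (the $t$-Perron formula) using the discreteness of $\{t_{k}\}$ established in Section 1, exactly as you do, with the substitution $u=2s$ accounting for the factor of $2$ in the exponent. Your write-up is in fact more careful than the paper's (which treats the corollary as immediate), particularly in tracking the abscissa shift from $c=2$ for $(D_{t}.a)$ down to $\sigma>1$ for $Z_{r}$ and in noting that the count excludes the origin.
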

Here we cut out the intermittent function in the $a$, i.e. $a(t_{k})=a_{k}$. The Hlawka Zeta function thus literally plays an integral role in counting lattice points on in a star-shaped region expanding at a constant rate. This answers one of our main questions. 

\section{Formula as a sum over Eisenstein Series}

We now want to establish a formula for the Hlawka zeta function in terms of Eisenstein Series acting on a particular set of functions. We first define this class of function now. To standardize notation, let $GL(2,\R)^{+}$ be real 2 by 2 matrices with positive determinant. Let $f_{q,s_{1},s_{2}}:GL(2,\R)^{+}\to \C$, $q\in \Z$, $s_{1},s_{2}\in \C$ be defined in the following manner. The Iwasawa decomposition of $GL(2,\R)^{+}$ gives that an element of this group, $g$, can be written uniquely as \\
\begin{equation}
g=\begin{pmatrix}
u & 0 \\
0 & u \\
\end{pmatrix}
\begin{pmatrix}
y^{1/2} &  xy^{-1/2} \\
0 & y^{-1/2} \\
\end{pmatrix} \kappa_{\theta}, \ \kappa_{\theta}=
\begin{pmatrix}
\cos(\theta) & \sin(\theta) \\
-\sin(\theta) & \cos(\theta) \\
\end{pmatrix}
\end{equation}
for $x,y,u\in \R$ and $u,y>0$, $\theta\in [0,2\pi)$. This allows us to define 
$$f_{q,s_{1},s_{2}}\left(
\begin{pmatrix}
u & 0 \\
0 & u \\
\end{pmatrix}
\begin{pmatrix}
y^{1/2} &  xy^{-1/2} \\
0 & y^{-1/2} \\
\end{pmatrix} \kappa_{\theta} \right) :=u^{s_{1}+s_{2}}y^{(s_{1}-s_{2}+1)/2}e^{iq\theta}.$$
For our purposes, we will want to look at these functions as defined on the quotient $\Gamma^{\infty}_{+} \backslash  GL(2,\R)^{+}$, where 
$$\Gamma^{\infty}_{+}=\left\{ 
\begin{pmatrix}
1 & n \\
0 & 1 \\ 
\end{pmatrix}:n\in \Z \right\}, \ n\in \Z.$$
To establish this as reasonable, we have the following.
\begin{lemma}
	The functions $f_{q,s_{1},s_{2}}$ are equal on right cosets $\Gamma_{+}^{\infty}g$ for $g\in GL(2,\R)^{+}$, thus defining functions $\wt{f}_{q,s_{1},s_{2}}:\Gamma_{+}^{\infty}\backslash GL(2,\R)^{+}\to \C^{\times}$ uniquely on the quotient group. Furthermore if $s=(s_{1}-s_{2}+1)/2$, we fix $s_{1}+s_{2}$, and $\theta(c,d)$ is the angle between the line from the origin and $(c,d)$ and the positive $x$-axis then
	$$f_{q,s_{1},s_{2}}\left(
	\begin{pmatrix}
	a & b \\
	c & d 
	\end{pmatrix}\right)=\frac{(-i)^{q}e^{iq\theta(c,d)}}{(c^{2}+d^{2})^{s}} \ \text{for} \ 	\begin{pmatrix}
	a & b \\
	c & d 
	\end{pmatrix} \in SL(2,\Z).$$
	Thus $\wt{f}_{q}:\C\times \Gamma_{+}^{\infty}\backslash SL(2,\Z)\to \C^{\times}$ where $\wt{f}_{q}(s,\om g)=f_{q,s_{1},s_{2}}(g)$ and $\om\in \Gamma_{+}^{\infty}$, $g\in SL(2,\Z)$ is a well-defined, smooth function of $s$ as well. 
\end{lemma}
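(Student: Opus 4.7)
The plan is to split the lemma into its three constituent claims: left-invariance of $f_{q,s_{1},s_{2}}$ under $\Gamma_{+}^{\infty}$, the closed-form expression on $SL(2,\Z)$, and finally well-definedness and smoothness of $\wt{f}_{q}$. For the first, I would compute directly. A generic element of $\Gamma_{+}^{\infty}$ has the form $\begin{pmatrix} 1 & n \\ 0 & 1 \end{pmatrix}$; this commutes past the central factor $\diag(u,u)$ in (5), acts on the upper-triangular factor by sending $x \mapsto x+n$ while fixing $y$, and leaves $\kappa_{\theta}$ untouched. Since $f_{q,s_{1},s_{2}}$ depends only on $u$, $y$, and $\theta$, the invariance is immediate, so the function descends to $\Gamma_{+}^{\infty}\backslash GL(2,\R)^{+}$.

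For the second claim I would write out the Iwasawa decomposition (5) explicitly for a matrix $g = \begin{pmatrix} a & b \\ c & d \end{pmatrix} \in SL(2,\Z)$. Multiplying out the right-hand side of (5), the determinant of the result is $u^{2}$ and the bottom row is $(-uy^{-1/2}\sin\theta,\, uy^{-1/2}\cos\theta)$. Equating these with $1$ and $(c,d)$ respectively forces $u=1$, $y=1/(c^{2}+d^{2})$, and $(\cos\theta,\sin\theta)=(d,-c)/\sqrt{c^{2}+d^{2}}$. Comparing with $(\cos\theta(c,d),\sin\theta(c,d))=(c,d)/\sqrt{c^{2}+d^{2}}$ one reads off $\theta = \theta(c,d) - \pi/2$, so that $e^{iq\theta} = e^{-iq\pi/2}e^{iq\theta(c,d)} = (-i)^{q}e^{iq\theta(c,d)}$. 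Substituting $u^{s_{1}+s_{2}}=1$ and $y^{(s_{1}-s_{2}+1)/2}=(c^{2}+d^{2})^{-s}$ into the definition of $f_{q,s_{1},s_{2}}$ then yields the stated formula.

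For the third claim, the expression obtained in the second part involves only $(c,d)$, $s$, and $q$. Since any element of $\Gamma_{+}^{\infty}$ acts on $SL(2,\Z)$ by left multiplication as $(a,b,c,d)\mapsto(a+nc,b+nd,c,d)$, the bottom row is preserved and the value descends to $\Gamma_{+}^{\infty}\backslash SL(2,\Z)$. Smoothness in $s$ is really holomorphicity: for any $g\in SL(2,\Z)$ one has $c^{2}+d^{2}>0$, since otherwise $ad-bc=0$ would contradict $\det g = 1$; hence $(c^{2}+d^{2})^{-s}=\exp(-s\log(c^{2}+d^{2}))$ is entire in $s$, and no other factor in the formula depends on $s$. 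The main obstacle lies in the second piece: identifying the Iwasawa angle $\theta$ as $\theta(c,d)-\pi/2$ requires careful attention to the sign convention in $\kappa_{\theta}$, since this quarter-turn shift is exactly what produces the decorative $(-i)^{q}$. As a sanity check I would evaluate at $g=I$, where $(c,d)=(0,1)$ and $\theta(c,d)=\pi/2$: the formula gives $(-i)^{q}e^{iq\pi/2}=(-i)^{q}i^{q}=1$, matching $f_{q,s_{1},s_{2}}(I)=1$ computed directly.
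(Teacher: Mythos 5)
Your proposal is correct and follows essentially the same route as the paper: an explicit Iwasawa computation giving $u=1$, $y=(c^{2}+d^{2})^{-1}$, and $\theta=\theta(c,d)-\pi/2$ (whence the $(-i)^{q}$), together with the observation that left multiplication by $\Gamma_{+}^{\infty}$ fixes the bottom row. Your version of the invariance step — noting the unipotent only shifts $x$, on which $f_{q,s_{1},s_{2}}$ does not depend — is in fact a touch cleaner than the paper's, and your sanity check at $g=I$ is a nice addition, but the substance is identical.
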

\begin{proof}
For an element $g=
\begin{pmatrix}
a & b \\
c & d \\
\end{pmatrix}\in GL(2,\R)^{+}$ an easy computation gives $y=((c/u)^{2}+(d/u)^{2})^{-1}$ in the Iwasawa decomposition. Furthermore, if $\theta(c,d)$ is the angle made between the point $(c,d)$ and the positive $x$-axis, $\theta=\theta(c,d)-\pi/2$ (this can be seen from a right triangle construction). The formula for $f_{q,s_{1},s_{2}}$ is then 
$$f_{q,s_{1},s_{2}}(g)=u^{s_{1}+s_{2}}((c/u)^{2}+(d/u)^{2})^{-(s_{1}-s_{2}+1)/2}e^{iq(\theta(c,d)-\pi/2)}.$$
With this in mind, we compute
$$\begin{pmatrix}
1 & n \\
0 & 1 \\
\end{pmatrix}
\begin{pmatrix}
a & b \\
c & d \\
\end{pmatrix}=
\begin{pmatrix}
a+nc & b+nd \\
c & d \\
\end{pmatrix}$$
so multiplying by an element of $\Gamma_{+}^{\infty}$ on the left affects neither $u$ nor the bottom row of the matrix. It follows that $f_{q,s_{1},s_{2}}$ is well defined on $\Gamma^{\infty}_{+}\backslash GL(2,\R)^{+}$.
We now restrict to the subgroup $SL(2,\Z)$ and let $s=(s_{1}-s_{2}+1)/2$ so that this becomes 
$$f_{q,s_{1},s_{2}}(g)=\frac{(-i)^{q}e^{iq\theta(c,d)}}{(c^{2}+d^{2})^{s}}.$$
 The smoothness in $s$ is then clear from the formula. 
\end{proof}
We now introduce the key Eisenstein Series in the upcoming proposition. 
\begin{definition}
The Eisenstein Series acting on $f_{q}$ evaluated at $g\in GL(2,\R)^{+}$ is 
$$(E.f_{q})(g,s_{1},s_{2})=\sum_{\gm\in \Gamma^{\infty}_{+} \backslash  SL(2,\Z)} \wt{f}_{q,s_{1},s_{2}}(\gm g).$$
\end{definition}

We see that due to the lemma if $g\in SL(2,\Z)$ we may write $$(E.f_{q})(g,s_{1},s_{2})=\sum_{\gm\in \Gamma^{\infty}_{+} \backslash  SL(2,\Z)} \wt{f}_{q}(s,\gm g)=(E.f_{q})(g,s)$$
where $s=\frac{1}{2}(s_{1}-s_{2}+1)$ as before in this case. Furthermore, we have
$$(E.f_{q})(I,s)=\sum_{\gcd(c,d)=1}\frac{(-i)^{q}e^{iq\theta(c,d)}}{(c^{2}+d^{2})^{s}}.$$
Also, we define 
$$(E^{\sharp}.f_{q})(g,s_{1},s_{2}):=\zeta(2s)(E.f)(g,s_{1},s_{2}).$$ 
One can verify through a standard calculation (noting that $\theta(m,n)=\theta(km,kn)$ for $k>0$) that 
$$(E^{\sharp}.f_{q})(I,s)=\sum_{(m,n)\neq (0,0)}\frac{(-i)^{q}e^{iq\theta(m,n)}}{(m^{2}+n^{2})^{s}}.$$
Note that this Eisenstein Series converges absolutely for $\Re(s)>1$, as the modulus of each term is the same as a classical Eisenstein series, so we refer to \cite{Bump1}. \\
\tab We now define also $\wh{r^{s}}(q)$ to be the $q$th Fourier coefficient in the Fourier Series for $r^{s}$, i.e. $r^{s}(\theta)=\sum_{q=-\infty}^{\infty}\wh{r^{s}}(q)e^{iq\theta}$ when 
$$\wh{r^{s}}(q)=\frac{1}{2\pi}\int_{0}^{2\pi}r^{s}(\theta)e^{-iq\theta}d\theta.$$
Since 
$$r^{s}(\theta)=r(\theta)^{i\Im(s)+\Re(s)}=r(\theta)^{\Re(s)}e^{i\ln(r(\theta))\Im(s)}$$
$r^{s}$ is a continuous function from $\R/2\pi\to \C$. We will need to discuss the absolute convergence of this Fourier series. We introduce a definition here to help this classification.  
\begin{definition}
	Let $A\subseteq \R^{n}$ (or $\C^{n}$) be an open set and $f:A \to \R^{m}$ (or $\C^{m}$). If there exists a constant $C$ and $0<\alpha\leq 1$ such that 
	$$|f(x)-f(y)|\leq C |x-y|^{\alpha}$$ 
	for all $x,y\in A$, then call $f$ \underline{$\alpha$-H\"{o}lder continuous}. 
\end{definition} 
This definition makes sense for any function between metric spaces. We note that $\alpha>0$ ensures that the function is continuous, and that $\alpha=1$ would give the definition for Lipschitz continuous. Thus this condition is stronger than simply continuous and weaker than Lipschitz continuous or continuously differentiable.  \\
\tab We have the following result.  

\begin{lemma}
	If $f:\R/2\pi \Z \to \C$ is $\alpha$-H\"{o}lder continuous for $\alpha>\frac{1}{2}$, then its Fourier Series converges absolutely. 
\end{lemma}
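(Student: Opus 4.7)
The plan is to prove this via a dyadic decomposition argument (essentially Bernstein's theorem), whose key input is Parseval's identity applied to the translate $f(\cdot + h) - f(\cdot)$. The Hölder hypothesis will directly control the $L^2$ norm of this difference, and the constraint $\alpha > 1/2$ will appear at the very end as the convergence condition for a geometric-type sum.

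First I would record the standard identity: for a translation by $h$, the $n$-th Fourier coefficient of $f(\cdot + h) - f(\cdot)$ equals $(e^{inh} - 1)\wh{f}(n)$. Parseval then gives
$$\sum_{n\in\Z} |\wh{f}(n)|^2 \cdot 4\sin^2(nh/2) = \frac{1}{2\pi}\int_0^{2\pi}|f(\theta + h) - f(\theta)|^2 \, d\theta \leq C^2 h^{2\alpha},$$
where $C$ is the Hölder constant. Next I would localize: choosing $h = 2^{-k}$ and restricting attention to the dyadic shell $S_k := \{n \in \Z : 2^{k-2} \leq |n| < 2^{k-1}\}$, one has $nh/2 \in [1/8, 1/4)$, on which $\sin^2(nh/2)$ is bounded below by an absolute constant $c_0 > 0$. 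Therefore
$$\sum_{n \in S_k} |\wh{f}(n)|^2 \leq \frac{C^2}{4c_0} \cdot 2^{-2k\alpha}.$$

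Now I would apply Cauchy–Schwarz on each shell $S_k$, which contains $O(2^k)$ integers:
$$\sum_{n\in S_k}|\wh{f}(n)| \leq \bigl(\#S_k\bigr)^{1/2}\!\left(\sum_{n\in S_k}|\wh{f}(n)|^2\right)^{\!1/2} \leq C' \cdot 2^{k/2} \cdot 2^{-k\alpha} = C' \cdot 2^{k(1/2 - \alpha)}.$$
Summing this bound over $k \geq 1$ yields a geometric series with ratio $2^{1/2-\alpha}$, which converges precisely because $\alpha > 1/2$. Adding in the finitely many terms with $|n| \leq 1$ (handled trivially, since $|\wh{f}(0)| \leq \|f\|_\infty$ by its integral definition) gives absolute convergence of $\sum_n \wh{f}(n) e^{in\theta}$ uniformly in $\theta$.

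The main obstacle is really a conceptual rather than a technical one: the naive pointwise estimate $|\wh{f}(n)| = O(|n|^{-\alpha})$, obtained by the half-period shift trick $\theta \mapsto \theta + \pi/n$, is insufficient because $\sum |n|^{-\alpha}$ diverges for $\alpha \leq 1$. The dyadic square-function approach exploits the fact that Hölder continuity controls an entire $L^2$ average of coefficients in each dyadic block, and Cauchy–Schwarz recovers exactly the extra factor $\sqrt{\#S_k}$ that makes $\alpha > 1/2$ the sharp threshold. The remaining details — verifying the Parseval identity, the constant lower bound on $\sin^2$ in the specified range, and counting $\#S_k$ — are routine.
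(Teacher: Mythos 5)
Your proof is correct. The paper does not actually prove this lemma --- it simply cites Katznelson --- and the argument you give (Parseval applied to $f(\cdot+h)-f(\cdot)$, dyadic shells with $h=2^{-k}$ so that $\sin^2(nh/2)$ is bounded below, then Cauchy--Schwarz on each shell to produce the geometric series with ratio $2^{1/2-\alpha}$) is precisely the classical proof of Bernstein's theorem contained in the cited pages, so you have in effect supplied the details the paper outsources.
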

\begin{proof}
	See \cite[31-33]{katz}. 
\end{proof}

Note here that there are in fact conditions which exactly give the absolute convergence of the Fourier Series, but we opt for this condition since it is enough to check that $r$ is either differentiable or Lipschitz for this to hold. \\
\tab With this notation, we can now state the main theorem of this section. 
\begin{theorem}[Eisenstein Series Formula, preliminary version]
	Let all of the definitions be as above. Suppose $r$ is $\alpha$-H\"{o}lder continuous for $\alpha>\frac{1}{2}$. Then the Hlawka Zeta function $Z_{r}(s)$ is 
	$$Z_{r}(s)=\sum_{q=-\infty}^{\infty}i^{q}\wh{r^{2s}}(q)(E^{\sharp}.f_{q})(I,s)$$ 
	with absolute convergence of the right side happening on the half-plane $\Re(s)>1$. 
\end{theorem}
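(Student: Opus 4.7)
The plan is to start from the representation
\[
Z_r(s)=\sum_{(m,n)\neq(0,0)}\frac{r^{2s}(\theta(m,n))}{(m^2+n^2)^s}
\]
from equation (2), substitute the Fourier expansion
\[
r^{2s}(\theta)=\sum_{q=-\infty}^{\infty}\wh{r^{2s}}(q)\,e^{iq\theta},
\]
interchange the two summations, and then recognize the resulting inner sum as $i^q(E^{\sharp}.f_q)(I,s)$, using the identity $1/(-i)^q=i^q$ together with the explicit formula
\[
(E^{\sharp}.f_{q})(I,s)=\sum_{(m,n)\neq (0,0)}\frac{(-i)^{q}e^{iq\theta(m,n)}}{(m^{2}+n^{2})^{s}}
\]
established earlier in the section.

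Before the substitution is legitimate I need to know that the Fourier series for $r^{2s}$ actually represents $r^{2s}(\theta)$ pointwise. For this I will invoke the preceding lemma on Hölder continuity of $f$ implying absolute convergence of its Fourier series: since $r$ is $\alpha$-Hölder with $\alpha>\tfrac12$ and takes values in a compact subinterval $[m,M]\subset\R_{>0}$ (by continuity and positivity on the compact domain $\R/2\pi\Z$), the map $x\mapsto x^{2s}=\exp(2s\log x)$ is smooth, hence Lipschitz, on $[m,M]$. Composing with $r$ therefore produces an $\alpha$-Hölder function $\theta\mapsto r^{2s}(\theta)$, whose Fourier series converges absolutely and uniformly to it. This gives in particular the uniform bound $\sum_q|\wh{r^{2s}}(q)|<\infty$.

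The central step is the swap of summation, and this will be the main technical point to justify. I will do it by Fubini/Tonelli applied to the product measure on $\Z^2\ssm\{0\}\times\Z$ (each with counting measure). The absolute version of the double sum is
\[
\sum_{(m,n)\neq(0,0)}\sum_{q\in\Z}\frac{|\wh{r^{2s}}(q)|\,|e^{iq\theta(m,n)}|}{|(m^2+n^2)^s|}
=\left(\sum_{q\in\Z}|\wh{r^{2s}}(q)|\right)\sum_{(m,n)\neq(0,0)}\frac{1}{(m^2+n^2)^{\Re(s)}}.
\]
The first factor is finite by the Hölder-Fourier argument above; the second converges for $\Re(s)>1$ by the same estimate used in the lemma establishing absolute convergence of $Z_r(s)$. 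Hence for $\Re(s)>1$ the double sum converges absolutely and we may interchange the order, obtaining
\[
Z_r(s)=\sum_{q=-\infty}^{\infty}\wh{r^{2s}}(q)\sum_{(m,n)\neq(0,0)}\frac{e^{iq\theta(m,n)}}{(m^2+n^2)^s}
=\sum_{q=-\infty}^{\infty} i^q\,\wh{r^{2s}}(q)\,(E^{\sharp}.f_{q})(I,s),
\]
as claimed, with absolute convergence on $\Re(s)>1$ supplied by exactly the same bound. The main obstacle is the interchange of summation: all the rest is bookkeeping, so the proof really hinges on the Hölder hypothesis, which is precisely what is needed to make $\sum_q|\wh{r^{2s}}(q)|$ finite and thereby force joint absolute convergence.
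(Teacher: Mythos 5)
Your proof is correct and takes essentially the same route as the paper's: both hinge on the Fubini interchange justified by $\left(\sum_q|\wh{r^{2s}}(q)|\right)\cdot\sum_{(m,n)\neq(0,0)}(m^2+n^2)^{-\Re(s)}<\infty$, with the paper merely running the computation in the opposite direction (from the Eisenstein sum back to $Z_r(s)$). Your explicit verification that $\theta\mapsto r^{2s}(\theta)$ is itself $\alpha$-H\"older (via the Lipschitz bound on $x\mapsto x^{2s}$ on a compact subinterval of $\R_{>0}$) fills in a small step the paper leaves implicit when it applies the H\"older--Fourier lemma to $r^{2s}$ rather than to $r$.
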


\begin{proof}
	We look to evaluate the sum in question: 
	$$\sum_{q=-\infty}^{\infty}\wh{r^{2s}}(q)(E^{\sharp}.f_{q,s})(I,s)=\sum_{q=-\infty}^{\infty}\wh{r^{2s}}(q)\sum_{(m,n)\neq (0,0)}\frac{e^{iq\theta(m,n)}}{(m^{2}+n^{2})^{s}}=\sum_{q=-\infty}^{\infty}\sum_{(m,n)\neq (0,0)}\frac{\wh{r^{2s}}(q)e^{iq\theta(m,n)}}{(m^{2}+n^{2})^{s}}.$$
	We would like to change the order of the summation: Thus we must check that the sum absolutely converges. We see 
	$$\sum_{q=-\infty}^{\infty}\sum_{(m,n)\neq (0,0)}\left|\frac{\wh{r^{2s}}(q)e^{iq\theta(m,n)}}{(m^{2}+n^{2})^{s}}\right|=\sum_{q=-\infty}^{\infty}|\wh{r^{2s}}(q)|\sum_{(m,n)\neq (0,0)}\left|\frac{1}{(m^{2}+n^{2})^{s}}\right|$$
	and we know that $\sum_{(m,n)\neq (0,0)}\frac{1}{(m^{2}+n^{2})^{s}}$ converges absolutely with $\Re(s)>1$ and $\sum_{q=-\infty}^{\infty}|\wh{r^{2s}}(q)|$ converges by the lemma. Thus we can switch the order of the summation to obtain: 
	$$\sum_{q=-\infty}^{\infty}\sum_{(m,n)\neq (0,0)}\frac{\wh{r^{2s}}(q)e^{iq\theta(m,n)}}{(m^{2}+n^{2})^{s}}=\sum_{(m,n)\neq(0,0)}\frac{1}{(m^{2}+n^{2})^{s}}\sum_{q=-\infty}^{\infty}\wh{r^{2s}}(q)e^{iq\theta(m,n)}$$
	and now since $\sum_{q=-\infty}^{\infty}\wh{r^{2s}}(q)e^{iq\theta(m,n)}=r^{2s}(\theta(m,n))$, we get that this is equal to 
	$$=\sum_{(m,n)\neq (0,0)}\frac{r^{2s}(\theta(m,n))}{(m^{2}+n^{2})^{s}}$$ 
	which is one of the definitions of $Z_{r}(s)$. 
\end{proof}

\tab This proposition tells us that the Hlawka Zeta function is a sum of Eisenstein Series with coefficients given by these $\wh{r^{2s}}(q)$. The Eisenstein Series portion is fairly well known and studied. The coefficients on the other hand are much more difficult to compute. In fact, much of the information about $Z_{r}(s)$ is sitting inside of these coefficients. Before analyzing $Z_{r}(s)$ under this context, there is an easy corollary to this proposition which will prove useful later. \\
\tab Suppose that $r(\theta)$ is an even function. Then $r$ has a cosine series expansion 
$$r(\theta)=\sum_{q=0}^{\infty}\wt{r}(q)\cos(q\theta)$$
where 
$$\wt{r}(q)=\frac{1}{2\pi}\int_{0}^{2\pi}r(\theta)\cos(q\theta)d\theta.$$
Furthermore, we have that the Fourier coefficients $\wh{r}(q)$ with respect to the $e^{iq\theta}$ basis are 
$$\wh{r}(q)=
\begin{cases}
\wt{r}(0) \ \ \ \ \  \text{if} \ q=0 \\
\wt{r}(|q|)/2 \ \text{if} \ q\neq 0 \\
\end{cases}
$$
in this situation, we have the following. 
\begin{corollary}
	If $r:\R/2\pi \to \R_{>0}$ is an even function satisfying the hypotheses of the previous proposition and cosine series coefficients defined as above, then 
	$$Z_{r}(s)=\wt{r^{2s}}(0)(E^{\sharp}.f_{0})(I,s)+\frac{1}{2}\sum_{q=1}^{\infty}i^{q}\wt{r^{2s}}(q)((E^{\sharp}.f_{q})(I,s)+(-1)^{q}(E^{\sharp}.f_{-q})(I,s)).$$
\end{corollary}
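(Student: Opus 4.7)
The plan is to start with the Eisenstein Series Formula already established in the preceding theorem and then exploit the evenness of $r$ to convert the two-sided sum over $q \in \Z$ into a one-sided sum over $q \geq 0$, at which point the relationship between the exponential Fourier coefficients $\wh{r^{2s}}(q)$ and the cosine coefficients $\wt{r^{2s}}(q)$ stated in the corollary will produce exactly the claimed expression.

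First I would observe that since $r(\theta) = r(-\theta)$, we also have $r^{2s}(\theta) = r^{2s}(-\theta)$ as functions from $\R/2\pi\Z$ to $\C$. A change of variables $\theta \mapsto -\theta$ in the defining integral for $\wh{r^{2s}}(q)$ then yields $\wh{r^{2s}}(-q) = \wh{r^{2s}}(q)$ for all $q \in \Z$. Combined with the stated identification $\wh{r^{2s}}(q) = \wt{r^{2s}}(|q|)/2$ for $q \neq 0$ and $\wh{r^{2s}}(0) = \wt{r^{2s}}(0)$, this gives a clean description of the coefficients on both sides.

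Next I would split the sum in the Eisenstein Series Formula as
\[
Z_r(s) = \wh{r^{2s}}(0)(E^\sharp.f_0)(I,s) + \sum_{q=1}^{\infty} i^q \wh{r^{2s}}(q)(E^\sharp.f_q)(I,s) + \sum_{q=-\infty}^{-1} i^q \wh{r^{2s}}(q)(E^\sharp.f_q)(I,s),
\]
and reindex the last sum by replacing $q$ with $-q$ so that it runs over $q \geq 1$. Here I would use the elementary identity $i^{-q} = (-i)^q = (-1)^q i^q$ and the evenness relation $\wh{r^{2s}}(-q) = \wh{r^{2s}}(q)$ from the first step, turning the negative-index sum into
\[
\sum_{q=1}^{\infty} (-1)^q i^q \wh{r^{2s}}(q)(E^\sharp.f_{-q})(I,s).
\]

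Finally I would combine the two sums over $q \geq 1$ and substitute $\wh{r^{2s}}(q) = \tfrac{1}{2}\wt{r^{2s}}(q)$, producing the factor of $\tfrac12$ in front and bracketing $(E^\sharp.f_q)(I,s) + (-1)^q (E^\sharp.f_{-q})(I,s)$ as required, while the $q=0$ term gives $\wt{r^{2s}}(0)(E^\sharp.f_0)(I,s)$. There is no real obstacle here since absolute convergence in the $q$-variable was already verified in the proof of the preceding theorem, so all rearrangements and reindexings are justified; the only thing to be careful about is the bookkeeping of the powers of $i$ and the correct matching of $\wh{r^{2s}}$ with $\wt{r^{2s}}$ at $q=0$ versus $q \neq 0$.
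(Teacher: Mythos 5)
Your proposal is correct and is exactly the computation the paper intends: split off the $q=0$ term, fold the negative-index half of the sum onto the positive half using $\wh{r^{2s}}(-q)=\wh{r^{2s}}(q)$ and $i^{-q}=(-1)^{q}i^{q}$, and substitute the stated relation $\wh{r^{2s}}(q)=\tfrac{1}{2}\wt{r^{2s}}(|q|)$ for $q\neq 0$, with the rearrangement justified by the absolute convergence already established in the Eisenstein Series Formula. The paper leaves this verification implicit, and your write-up supplies it correctly.
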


\section{Some Observations and First Calculations}

\tab This section is devoted to various points. First, we calculate the Hlawka Zeta Function of a circle. Second, we establish an action of $GL(2,\R)$ on the set of functions $C(\R/2\pi\Z,(0,\infty))$, which is key to some of our conjectures and calculation of the ellipse's Hlawka Zeta function later. Finally, we give the functional equation for the $(E^{\sharp}.f_{q})(I,s)$, to be used later as well. \\
\tab First, we have the following. 

\begin{lemma}
	Let $R(\theta)=cr(\theta)$, where $c$ is any positive real number. Then 
	$$Z_{R}(s)=c^{2s}Z_{r}(s).$$
\end{lemma}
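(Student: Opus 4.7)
The plan is to proceed directly from the closed-form expression (2) for the Hlawka zeta function, exploiting the fact that the scaling $r \mapsto cr$ only affects the numerator factor $r^{2s}(\theta(m,n))$ and not the angles $\theta(m,n)$ or the lattice term $(m^2+n^2)^s$. Specifically, I would substitute $R(\theta) = cr(\theta)$ into formula (2), observe that $R^{2s}(\theta(m,n)) = c^{2s}\,r^{2s}(\theta(m,n))$ (a pointwise identity of positive real numbers raised to a complex power, where $c^{2s} = \exp(2s \log c)$ is unambiguous since $c > 0$), and pull the constant $c^{2s}$ out of the sum.

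To pull the constant out we need absolute convergence, but this is already supplied by Lemma~5 on the half-plane $\Re(s) > 1$, applied separately to $r$ and to $R = cr$ (both are continuous positive functions on a compact interval, so each yields a bound $M$ of the type used in that proof). Hence for $\Re(s) > 1$,
\begin{equation*}
Z_R(s) = \sum_{(m,n)\neq(0,0)} \frac{R^{2s}(\theta(m,n))}{(m^2+n^2)^s} = c^{2s}\sum_{(m,n)\neq(0,0)} \frac{r^{2s}(\theta(m,n))}{(m^2+n^2)^s} = c^{2s} Z_r(s),
\end{equation*}
which is the desired identity on the domain where $Z_r$ and $Z_R$ are defined as series.

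As a sanity check one could also recover the same identity from the original geometric definition: if $D_r$ and $D_R$ denote the star-shaped domains attached to $r$ and $R$ respectively, then $D_R = c\,D_r$, so $tD_R = (tc)D_r$, and therefore the minimal dilation constants satisfy $t_R(m,n) = t_r(m,n)/c$. Plugging this into $Z_R(s) = \sum t_R(m,n)^{-2s}$ again produces the factor $c^{2s}$. There is really no serious obstacle here; the only point to be careful about is that $c^{2s}$ is a complex scalar depending on $s$, so the identity is genuinely an equality of meromorphic functions rather than just of real numbers, and it extends to the full domain of definition of $Z_r$ by analytic continuation once that continuation is established in later sections.
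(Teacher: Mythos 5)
Your proof is correct and follows the same route the paper intends: the paper's proof is simply ``This is clear from the definition,'' and your argument is exactly the expansion of that remark via formula (2) (with the geometric check via $t_R(m,n)=t_r(m,n)/c$ as a consistent alternative). Nothing further is needed.
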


\begin{proof}
	This is clear from the definition. 
\end{proof}

This proposition tells us that multiplying by a constant does not change $Z_{r}(s)$ in an analytically significant way. We will use this concept much of the time: it is often convenient to consider classes of functions up to a constant. This can be understood in the sense of point counting as well: from the formula, we see that if $A'_{r}(x)$ is the point count function as in the Point-counting formula, then 
$$A'_{r}(cx)=\frac{1}{2\pi i}\int_{2\sg+i\infty}^{2\sg-i\infty}Z_{r}(s)(cx)^{2s}\frac{ds}{s}=\frac{1}{2\pi i}\int_{2\sg+i\infty}^{2\sg-i\infty}(c^{2s}Z_{r}(s))x^{2s}\frac{ds}{s}$$
$$=\frac{1}{2\pi i}\int_{2\sg+i\infty}^{2\sg-i\infty}Z_{R}(s)x^{2s}\frac{ds}{s}=A'_{R}(x).$$
This is fairly intuitive from the geometry of the situation, but it is nice to see that intuition relayed in the zeta function. \\
\subsection*{The Case of the Circle}
\tab We will now consider the case of circles centered at the origin. The function $r$ associated to the circle of radius $c>0$ is the constant function $r(\theta)=c$. To establish notation, let $z=x+iy$, and define
$$E(z,s):=\frac{1}{2}\sum_{\gm \in \Gamma^{\infty}_{+}\backslash SL(2,\Z)} \Im(\gm z)^{s}$$
as the classical Eisenstein Series. Here we use the convention that an element of a matrix group multiplied by an element of $\C$ means the action by linear fractional transformations. With this notation, we have the following proposition. We also note that by a standard calculation,
$$E(z,s)=\frac{1}{2\zeta(2s)}\sum_{(m,n)\neq (0,0)}\frac{y^{s}}{|mz+n|^{2s}}.$$
We now are able to prove a functional equation for this simple case. 
\begin{proposition}
	For $r(\theta)=c$ we have that,
	$$Z_{r}(s)=2c^{2s}\zeta(2s)E(i,s)$$
	and furthermore 
	$$\frac{1}{c^{2s}}\Gamma(s)\pi^{-s}Z_{r}(s)=\frac{1}{c^{2(1-s)}}\Gamma(1-s)\pi^{-(1-s)}Z_{r}(1-s).$$
	Furthermore, $Z_{r}(s)$ has meromorphic continuation to all of $\C$ except at the points $s=1$, $s=0$. 
\end{proposition}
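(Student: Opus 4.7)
The plan is to reduce everything to the classical theory of the real analytic Eisenstein series $E(z,s)$, using the formula
$$E(z,s)=\frac{1}{2\zeta(2s)}\sum_{(m,n)\neq(0,0)}\frac{y^{s}}{|mz+n|^{2s}}$$
that was recalled right before the proposition. For the first identity I would plug $r(\theta)=c$ into representation~(2) for $Z_{r}(s)$, pull the constant $c^{2s}$ out of the sum, and observe that what remains is exactly the sum above evaluated at $z=i$, where $y=1$ and $|mi+n|^{2s}=(m^{2}+n^{2})^{s}$. This gives $\sum_{(m,n)\neq(0,0)}(m^{2}+n^{2})^{-s}=2\zeta(2s)E(i,s)$ and hence $Z_{r}(s)=2c^{2s}\zeta(2s)E(i,s)$.

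For the functional equation I would introduce the completed Eisenstein series $E^{\sharp}(z,s):=\pi^{-s}\Gamma(s)\zeta(2s)E(z,s)$ (matching the notation $E^{\sharp}$ already introduced in the paper), and invoke the classical fact, cf.~\cite{Bump1}, that $E^{\sharp}(z,s)=E^{\sharp}(z,1-s)$. Multiplying the identity $Z_{r}(s)=2c^{2s}\zeta(2s)E(i,s)$ through by $c^{-2s}\Gamma(s)\pi^{-s}$ yields
$$c^{-2s}\Gamma(s)\pi^{-s}Z_{r}(s)=2E^{\sharp}(i,s),$$
and the symmetry $s\leftrightarrow 1-s$ on the right-hand side is precisely the claimed functional equation. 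The only calculation of substance is to verify that the factor $c^{-2s}$ pairs correctly with $c^{-2(1-s)}$, which is immediate from the definition.

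For the meromorphic continuation I would again lean on the classical theory: $E^{\sharp}(z,s)$ is meromorphic on all of $\C$ with at worst simple poles at $s=0$ and $s=1$, and the factor $c^{2s}\pi^{s}/\Gamma(s)$ is entire. Writing
$$Z_{r}(s)=\frac{c^{2s}\pi^{s}}{\Gamma(s)}\cdot 2E^{\sharp}(i,s),$$
we obtain a meromorphic continuation of $Z_{r}(s)$ to $\C$ whose only possible singularities come from the two poles of $E^{\sharp}(i,s)$ at $s=0,1$, as claimed.

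The main obstacle is really a bookkeeping issue rather than a conceptual one: I need to track carefully that the $c^{2s}$ factor splits into the expected $c^{-2s}$ on one side and $c^{-2(1-s)}$ on the other in the completed functional equation, and I need to be sure I am citing the classical functional equation and meromorphic continuation of $E(z,s)$ in the exact normalization that the paper has fixed (the $\frac{1}{2}$ in the definition of $E(z,s)$ and the factor of $\zeta(2s)$ in $E^{\sharp}$). Everything else is a direct substitution.
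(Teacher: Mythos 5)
Your proposal is correct and follows essentially the same route as the paper: substitute $r(\theta)=c$ into representation (2), identify the resulting sum with $2\zeta(2s)E(i,s)$, and cite the classical functional equation and meromorphic continuation of the completed Eisenstein series from \cite{Bump1}. The only (harmless) quibble is that the completed series $\pi^{-s}\Gamma(s)\zeta(2s)E(z,s)$ you write does not actually match the paper's symbol $E^{\sharp}$, which was defined without the $\pi^{-s}\Gamma(s)$ factor.
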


\begin{proof}
From the definitions and from the lemma, we have 
$$Z_{r}(s)=c^{2s}\sum_{(m,n)\neq (0,0)}\frac{1}{(m^{2}+n^{2})^{s}}=2c^{2s}\zeta(2s)E(i,s)$$
proving the first part of the proposition. We reference \cite[66-69]{Bump1} for the functional equation of the functional equation of the Eisenstein series here, which immediately gives the second part of the proposition. 
\end{proof}
 
This gives a functional equation $s\mapsto 1-s$ for this particular classical shape, and answering one of our guiding questions in this basic case. One can also understand this in terms of the Eisenstein series formula given in the previous section. One computes that for $r(\theta)=c$,

$$\wt{r^{2s}}(q)=\frac{1}{2\pi}\int_{0}^{2\pi} c^{2s}\cos(q\theta)d\theta=
\begin{cases}
0 \ \text{if} \ q\neq 0 \\
c^{2s} \ \text{if} \ q=0 \\
\end{cases}$$
and so the formula gives 
$$Z_{r}(\theta)=c^{2s}(E^{\sharp}.f_{0})(I,s)=c^{2s}\sum_{(m,n)\neq (0,0)}\frac{1}{(m^{2}+n^{2})^{s}}=c^{2s}\sum_{(m,n)\neq (0,0)}\frac{1}{(m^{2}+n^{2})^{s}}$$
which is a classical Eisenstein Series. 

\subsection*{Symmetries in the Eisenstein Series, Refining the Formula}

\tab Before attempting to calculate any other Hlawka Zeta functions, it is useful to begin discussing the third of our driving questions. We will begin in the analytic setting to discuss some of the symmetries in the Eisenstein series from before. 

\begin{lemma}
	Let 
	$\kappa_{\vph}=\begin{pmatrix}
	\cos(\vph) & \sin(\vph) \\
	-\sin(\vph) & \cos(\vph) \\
	\end{pmatrix}\in SO(2,\R)$ 
	as before. Then we have the following equality:
	$$(E^{\sharp}.f_{q})(\kappa_{\vph},s)=e^{iq\vph}(E^{\sharp}.f_{q})(I,s).$$ 
\end{lemma}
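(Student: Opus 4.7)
The plan is to transport the right multiplication by $\kappa_\varphi$ through the Iwasawa decomposition and observe that it affects only the $K$-component, on which $f_q$ transforms by the character $\kappa_\theta \mapsto e^{iq\theta}$. Everything then passes through the sum defining the Eisenstein series, and the $\zeta(2s)$ factor is inert under right $K$-translation.

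First I would verify the following pointwise identity: for any $g \in GL(2,\R)^+$,
$$f_{q,s_1,s_2}(g\kappa_\varphi) = e^{iq\varphi}\, f_{q,s_1,s_2}(g).$$
Write the Iwasawa decomposition $g = \mathrm{diag}(u,u)\cdot a_y n_x \cdot \kappa_\theta$ as in (5). Since $SO(2,\R)$ is a (commutative) subgroup with $\kappa_\theta \kappa_\varphi = \kappa_{\theta+\varphi}$, the product $g\kappa_\varphi$ has Iwasawa decomposition $\mathrm{diag}(u,u)\cdot a_y n_x\cdot \kappa_{\theta+\varphi}$, with the same $u$ and $y$ but angle $\theta+\varphi$ (interpreted modulo $2\pi$). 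Plugging this into the defining formula $u^{s_1+s_2} y^{(s_1-s_2+1)/2} e^{iq\theta}$ gives an extra factor of $e^{iq\varphi}$, and nothing else changes. This identity descends to $\widetilde{f}_{q,s_1,s_2}$ on $\Gamma^\infty_+ \backslash GL(2,\R)^+$ by the previous lemma, since left multiplication by $\Gamma^\infty_+$ commutes with right multiplication by $\kappa_\varphi$.

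Next I would apply the identity termwise inside the Eisenstein series. For $\gamma \in \Gamma^\infty_+ \backslash SL(2,\Z)$,
$$\widetilde{f}_{q,s_1,s_2}(\gamma \kappa_\varphi) = e^{iq\varphi}\,\widetilde{f}_{q,s_1,s_2}(\gamma).$$
Because the series converges absolutely for $\Re(s)>1$ (by the comparison with the classical Eisenstein series noted just after Definition 11), I may factor the constant $e^{iq\varphi}$ outside the sum, obtaining
$$(E.f_q)(\kappa_\varphi, s_1, s_2) = e^{iq\varphi}\,(E.f_q)(I,s_1,s_2).$$
Multiplying both sides by $\zeta(2s)$ (which does not involve $\varphi$) yields the claimed identity
$$(E^{\sharp}.f_q)(\kappa_\varphi, s) = e^{iq\varphi}\,(E^{\sharp}.f_q)(I, s)$$
in the region of absolute convergence, from which it extends to the full meromorphic domain of $E^{\sharp}.f_q$ by analytic continuation in $s$.

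There is no real obstacle here; the only content is the compatibility of the Iwasawa decomposition with right $K$-translation, which reduces to the group law $\kappa_\theta \kappa_\varphi = \kappa_{\theta+\varphi}$ in $SO(2,\R)$. The one place demanding a bit of care is the justification for pulling the factor $e^{iq\varphi}$ out of the summation, which should be done in the half-plane of absolute convergence before invoking analytic continuation.
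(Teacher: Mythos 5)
Your proof is correct and follows essentially the same route as the paper: both arguments rest on the observation that right multiplication by $\kappa_{\vph}$ only shifts the angle in the Iwasawa decomposition from $\theta$ to $\theta+\vph$, so each term of the Eisenstein series acquires the factor $e^{iq\vph}$, which then pulls out of the sum. Your version is slightly more careful about absolute convergence and the harmless $\zeta(2s)$ factor, but there is no substantive difference.
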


\begin{proof}
	We first would like to describe the effect of right multiplication on the right by an element of $SO(2,\R)$ on various pieces of the formula for the Eisenstein Series. Let $\gamma=
	\begin{pmatrix}
	a & b \\
	c & d \\
	\end{pmatrix}\in GL(2,\R)^{+}$.
	 In the Iwasawa decomposition of $\gamma\kappa_{\vph}=
	 \begin{pmatrix}
	 a' & b' \\ 
	 c' & d' \\
	 \end{pmatrix}
	 $, we have 
	$$\gamma\kappa_{\vph}=\begin{pmatrix}
	u & 0 \\
	0 & u \\
	\end{pmatrix}
	\begin{pmatrix}
	y^{1/2} &  xy^{-1/2} \\
	0 & y^{-1/2} \\
	\end{pmatrix} \kappa_{\theta}\kappa_{\vph}
	=\begin{pmatrix}
	u & 0 \\
	0 & u \\
	\end{pmatrix}
	\begin{pmatrix}
	y^{1/2} &  xy^{-1/2} \\
	0 & y^{-1/2} \\
	\end{pmatrix} \kappa_{\theta+\vph}$$
	and thus we have $\theta(c',d')=\theta(c,d)+\vph$. This gives that 
	$$(E^{\sharp}.f_{q})(\kappa_{\vph},s)=\sum_{(c,d)\neq (0,0)} \frac{e^{iq(\theta(c',d'))}}{(c^{2}+d^{2})^{s}}=\sum_{(c,d)\neq 0}\frac{e^{iq(\theta(c,d)+\vph)}}{(c^{2}+d^{2})^{s}}=e^{iq\vph}(E^{\sharp}.f_{q})(I,s)$$
\end{proof}

This allows us to pose a significant simplification to the $E^{\sharp}.f_{q}$'s in the formula. We list the simplifications here. 

\begin{proposition}
	We have the following.
	\begin{enumerate}
		\item $(E^{\sharp}.f_{q})(I,s)=0$ if $q$ is odd. 
		\item $(E^{\sharp}.f_{q})(I,s)=0$ if $q\equiv 2 \mod{4}$.  
		\item $(E^{\sharp}.f_{q})(I,s)=(E^{\sharp}.f_{q})(g,s)$ for $g\in SL(2,\Z)$. In particular, we have that $(E^{\sharp}.f_{q})(I,s)=(E^{\sharp}.f_{q})(\kappa_{k\pi/2},s)$ for $k\in \Z$. 
	\end{enumerate}
\end{proposition}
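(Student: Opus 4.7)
Plan. I would prove item III first, then derive items I and II as an immediate consequence of III combined with the rotation identity from the previous lemma. For item III, the Eisenstein series at $g \in SL(2,\Z)$ is given by $\sum_{\gamma \in \Gamma_+^\infty \backslash SL(2,\Z)} \wt{f}_q(s, \gamma g)$. Since $g \in SL(2,\Z)$, right-multiplication by $g$ permutes the cosets $\Gamma_+^\infty \backslash SL(2,\Z)$ (because $\Gamma_+^\infty$ acts on the left), so reindexing $\gamma \mapsto \gamma g^{-1}$ leaves the sum unchanged. Multiplying by $\zeta(2s)$ (which is independent of $g$) yields the equality for $E^\sharp.f_q$. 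The ``in particular'' clause reduces to checking $\kappa_{k\pi/2} \in SL(2,\Z)$ for every $k \in \Z$, which is immediate from $\kappa_{\pi/2} = \begin{pmatrix} 0 & 1 \\ -1 & 0 \end{pmatrix}$ (integer entries, determinant $1$).

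For items I and II, specializing III to $g = \kappa_{\pi/2}$ gives $(E^\sharp.f_q)(\kappa_{\pi/2}, s) = (E^\sharp.f_q)(I, s)$, while the previous lemma at $\vph = \pi/2$ gives $(E^\sharp.f_q)(\kappa_{\pi/2}, s) = i^q (E^\sharp.f_q)(I, s)$. Equating these produces the consistency relation $(1 - i^q)(E^\sharp.f_q)(I, s) = 0$, forcing vanishing whenever $i^q \ne 1$. Since $i^q = 1$ exactly when $q \equiv 0 \pmod 4$, we obtain vanishing for all odd $q$ (item I, where $i^q = \pm i$) and for all $q \equiv 2 \pmod 4$ (item II, where $i^q = -1$).

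I do not foresee any real obstacle here; the whole argument is a clean symmetry consistency observation, exploiting the fact that $\kappa_{\pi/2}$ happens to lie simultaneously in $SO(2,\R)$ (where the previous lemma produces a twist by $i^q$) and in $SL(2,\Z)$ (where item III gives plain invariance), so the two descriptions must agree. The only point requiring a moment of care is making sure $(E^\sharp.f_q)(\kappa_{\pi/2}, s)$ is unambiguously a function of $s$ alone on both sides of the identification, but this is implicit in the formulas set up to prove the previous lemma.
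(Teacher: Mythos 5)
Your argument is correct. For item III you reindex the coset sum via right multiplication by $g\in SL(2,\Z)$, which is exactly what the paper does, and the observation that $\kappa_{k\pi/2}\in SL(2,\Z)$ is the same. For items I and II, however, you take a genuinely different and more unified route: the paper proves each vanishing statement by pairing lattice points directly in the explicit sum --- $(c,d)\leftrightarrow(-c,-d)$ for odd $q$ (phase $e^{iq\pi}=-1$) and $(c,d)\leftrightarrow(-d,c)$ for $q\equiv 2 \pmod 4$ (phase $e^{iq\pi/2}=-1$) --- and letting the terms cancel in pairs. You instead play the $SO(2,\R)$-equivariance $(E^{\sharp}.f_{q})(\kappa_{\vph},s)=e^{iq\vph}(E^{\sharp}.f_{q})(I,s)$ off against the $SL(2,\Z)$-invariance from III at the single element $\kappa_{\pi/2}$, obtaining $(1-i^{q})(E^{\sharp}.f_{q})(I,s)=0$ and hence vanishing unless $q\equiv 0\pmod 4$. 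The two arguments are close cousins --- your reindexing by $\kappa_{\pi/2}$ in III is precisely the paper's pairing $(c,d)\mapsto(-d,c)$ read off globally --- but yours handles I and II in one stroke (the paper needs the separate $\kappa_{\pi}$ pairing for odd $q$, which is just the square of your symmetry), avoids term-by-term manipulation of a rearranged double sum, and makes it transparent that the vanishing is forced by a consistency condition between two symmetries rather than an ad hoc cancellation. Both versions implicitly use absolute convergence ($\Re(s)>1$) to justify the reindexing; neither you nor the paper says so explicitly, but it is available from the earlier discussion and worth a sentence.
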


\begin{proof}
	For I, we see that for every $(c,d)\in \Z^{2}\ssm \{(0,0)\}$, there is a corresponding point $(-c,-d)$. These points are related by $\theta(c,d)=\theta(-c,-d)+\pi$ so the term associated with that point is
	$$\frac{e^{iq\theta(-c,-d)}}{((-c)^{2}+(-d)^{2})^{s}}=-\frac{e^{iq\theta(c,d)}}{(c^{2}+d^{2})^{s}}.$$
	The term corresponding to $(c,d)$ thus cancels with the one corresponding to $(-c,-d)$, so the sum telescopes to zero. \\
	\tab For II, we have that for every point $(c,d)$ there is a corresponding point $(-d,c)$, with $\theta(-d,c)=\theta(c,d)+\pi/2$. Thus the term corresponding to $(-d,c)$ is 
	$$\frac{e^{i(4q+2)\theta(-d,c)}}{(c^{2}+d^{2})^{s}}=\frac{e^{4qi\theta(c,d)}e^{i\pi}}{(c^{2}+d^{2})^{s}}=-\frac{e^{iq\theta(c,d)}}{(c^{2}+d^{2})^{s}}$$
	which cancels out the $(c,d)$ term in the sum and we have the result. \\
	\tab For III, this is immediate from the fact that translation by an element of a group is always a transitive action. Since $\kappa_{k\pi/2}\in SL(2,\Z)$, we have the other part of the proposition as well. 
\end{proof}

\tab We obtain immediately a refinement of the Eisenstein Series formula. 

\begin{corollary}[Eisenstein Series Formula, simplified version]
	Assume all of the hypotheses of the Eisenstein Series Formula. We have under the same conditions that 
	$$Z_{r}(s)=\sum_{q=-\infty}^{\infty}\wh{r^{2s}}(4q)(E^{\sharp}.f_{4q})(I,s)$$
	with this sum converging absolutely when $\Re(s)>1$. 
\end{corollary}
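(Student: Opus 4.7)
The plan is to derive this simplified formula as a direct consequence of the preliminary Eisenstein Series Formula combined with the vanishing statements I and II of the preceding proposition, so essentially no new analytic work is required.

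First I would invoke the preliminary Eisenstein Series Formula, which gives
\[
Z_{r}(s)=\sum_{q=-\infty}^{\infty}i^{q}\wh{r^{2s}}(q)(E^{\sharp}.f_{q})(I,s),
\]
with absolute convergence for $\Re(s)>1$. The point of absolute convergence here is crucial: it lets us rearrange, split, or discard terms without worrying about conditional cancellation, so the subseries over any subset of indices $q$ is also absolutely convergent and equal to the sum of the retained terms.

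Next I would partition the index set $\Z$ into residue classes modulo $4$. By part I of the proposition, every term with $q$ odd vanishes, which kills the classes $q\equiv 1,3\pmod 4$. By part II, every term with $q\equiv 2\pmod 4$ also vanishes. So the only surviving indices are those with $q\equiv 0\pmod 4$, and we can re-index by writing $q=4k$ for $k\in\Z$:
\[
Z_{r}(s)=\sum_{k=-\infty}^{\infty}i^{4k}\wh{r^{2s}}(4k)(E^{\sharp}.f_{4k})(I,s).
\]
Since $i^{4k}=1$ identically, the phase factor disappears and the formula reduces to the claimed expression. Finally, absolute convergence of this reindexed series for $\Re(s)>1$ follows at once from the absolute convergence of the preliminary formula on the same half-plane, since we are merely dropping terms.

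Because every ingredient is already proved in the excerpt (the preliminary theorem and the three-part proposition), there is no real obstacle; the only thing to be a little careful about is making explicit that term-by-term vanishing justifies discarding those indices inside an absolutely convergent double-indexed sum, so that the identity between the two series holds as an equality of complex numbers rather than merely a formal manipulation.
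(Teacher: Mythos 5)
Your proposal is correct and matches the paper's approach exactly: the paper presents this corollary as an immediate consequence of the preliminary Eisenstein Series Formula together with parts I and II of the vanishing proposition, which is precisely the argument you give. Your added care about absolute convergence justifying the term-by-term discarding, and the observation that $i^{4k}=1$ removes the phase factor, are the right details to make explicit.
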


This corollary immediately implies the following as well.

\begin{corollary}[Simplified Eisenstein Series Formula for even functions]
	Let the definitions be as in Corollary 17. We have the following.
	\begin{equation}
		Z_{r}(s)=\wt{r^{2s}}(0)(E^{\sharp}.f_{0})(I,s)+\frac{1}{2}\sum_{q=0}^{\infty}\wt{r^{2s}}(4q)((E^{\sharp}.f_{4q})(I,s)+(E^{\sharp}.f_{-4q})(I,s))
	\end{equation}
\end{corollary}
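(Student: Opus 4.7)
The approach is to combine the simplified formula for general $r$ (Corollary 17) with the explicit dictionary between the exponential Fourier coefficients $\wh{r^{2s}}(q)$ and the cosine coefficients $\wt{r^{2s}}(q)$ recorded just before the statement for even functions. This reduces the corollary to a bookkeeping exercise; there is no new analytic content to introduce.

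First I would observe that because $r$ is even, so is $r^{2s}$ (as a function of $\theta$ for each fixed $s$), hence $\wh{r^{2s}}(q)=\wh{r^{2s}}(-q)$ for every $q\in\Z$. Combined with the standard conversion formulas already recorded, namely
$$\wh{r^{2s}}(0)=\wt{r^{2s}}(0),\qquad \wh{r^{2s}}(q)=\tfrac{1}{2}\wt{r^{2s}}(|q|)\quad(q\neq 0),$$
this expresses every exponential Fourier coefficient in terms of the cosine coefficients. I also note that $r^{2s}$ inherits the $\alpha$-Hölder continuity of $r$ (with the same $\alpha>\tfrac12$) on any vertical strip of $s$, so Lemma 10 applies and the Fourier series of $r^{2s}$ converges absolutely; this is implicit in the hypotheses of Corollary 17.

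Second, I would invoke Corollary 17 to write
$$Z_r(s)=\sum_{q=-\infty}^{\infty}\wh{r^{2s}}(4q)(E^{\sharp}.f_{4q})(I,s),$$
isolate the $q=0$ summand, and pair up the indices $\pm q$ for $q\geq 1$, obtaining
$$Z_r(s)=\wh{r^{2s}}(0)(E^{\sharp}.f_{0})(I,s)+\sum_{q=1}^{\infty}\bigl[\wh{r^{2s}}(4q)(E^{\sharp}.f_{4q})(I,s)+\wh{r^{2s}}(-4q)(E^{\sharp}.f_{-4q})(I,s)\bigr].$$
The rearrangement is legal because the series in Corollary 17 is absolutely convergent on $\Re(s)>1$, which was already established there. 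Substituting the conversion formulas from the first step (and using $\wh{r^{2s}}(-4q)=\wh{r^{2s}}(4q)=\tfrac12\wt{r^{2s}}(4q)$) produces the displayed identity in the statement, with the understanding that the outer sum starts at $q=1$ so as not to double-count the zero-frequency term.

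There is essentially no main obstacle: the entire content is a rearrangement of Corollary 17 using the evenness-induced identities between the two Fourier bases. The only step deserving any care is the interchange of summation used to pair $\pm q$, and this is covered verbatim by the absolute convergence already supplied in Corollary 17.
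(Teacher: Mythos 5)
Your proposal is correct and is exactly the derivation the paper intends (the paper gives no explicit proof, asserting only that the simplified formula ``immediately implies'' this corollary): isolate the $q=0$ term of $\sum_{q}\wh{r^{2s}}(4q)(E^{\sharp}.f_{4q})(I,s)$, pair $\pm q$ using absolute convergence, and substitute $\wh{r^{2s}}(\pm 4q)=\tfrac{1}{2}\wt{r^{2s}}(4q)$ for $q\neq 0$. You are also right to flag that the displayed sum must start at $q=1$ rather than $q=0$ to avoid double-counting the zero-frequency term; as printed, the statement contains that indexing typo.
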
 

These corollaries, somewhat remarkably, tell us that only the Fourier coefficients of $r^{2s}$ which are indexed by multiples of 4 actually matter when computing the Hlawka Zeta function. \\
\tab This phenomenon is perhaps best understood not as evaluating the Eisenstein Series at a different matrix, but as rotation of the domain we are considering. To make this precise, we now would like to consider $R:\R \to \R_{>0}$ to be the unique $2\pi$ periodic extension of $r:\R/2\pi\Z \to \R_{>0}$ to the entire real line. If $f:\R\to \C$ is $2\pi$ periodic and $\vph\in \R/2\pi \Z$, let $f_{\vph}:\R\to \C$ be given by given by $f_{\vph}(\theta)=f(\theta+\vph)$. Recall the following lemma from harmonic analysis: 
\begin{lemma}
	Let $f:\R\to \C$ be a $2\pi$ periodic function and $\vph\in \R/2\pi \Z$. Then $e^{iq\vph}\wh{f}(q)$ is the $q$-th Fourier coefficient of the function $f_{\vph}$.
\end{lemma}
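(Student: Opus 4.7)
The plan is to proceed by direct computation from the definition of the $q$-th Fourier coefficient, exploiting the $2\pi$-periodicity of the integrand to handle the shift cleanly. Concretely, I would start by writing
\[
\wh{f_{\vph}}(q) = \frac{1}{2\pi}\int_{0}^{2\pi} f_{\vph}(\theta)\, e^{-iq\theta}\, d\theta = \frac{1}{2\pi}\int_{0}^{2\pi} f(\theta+\vph)\, e^{-iq\theta}\, d\theta,
\]
which is just the definition of $\wh{f_\vph}(q)$ combined with the definition of $f_\vph$.

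Next I would perform the substitution $u = \theta + \vph$, so that $du = d\theta$ and the limits become $\vph$ and $2\pi+\vph$. Pulling out the factor $e^{iq\vph}$ gives
\[
\wh{f_{\vph}}(q) = e^{iq\vph}\cdot \frac{1}{2\pi}\int_{\vph}^{2\pi+\vph} f(u)\, e^{-iqu}\, du.
\]
At this point the crucial observation is that the integrand $f(u)e^{-iqu}$ is itself $2\pi$-periodic, because $f$ is $2\pi$-periodic by hypothesis and $q\in\Z$ ensures $e^{-iqu}$ is also $2\pi$-periodic. Therefore its integral over any interval of length $2\pi$ agrees with its integral over $[0,2\pi]$, and we may replace the limits $\vph$ and $2\pi+\vph$ by $0$ and $2\pi$, yielding $e^{iq\vph}\wh{f}(q)$ as desired.

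There is essentially no obstacle here; the only step requiring any care is the justification that shifting the interval of integration by $\vph$ does not change the value of the integral, and that follows from the periodicity observation above. The statement is a textbook harmonic-analysis fact, and the short calculation above is the standard proof. Note that the hypothesis $\vph \in \R/2\pi\Z$ is natural: distinct representatives of the same coset differ by $2\pi k$, and both sides of the identity are unchanged under $\vph \mapsto \vph + 2\pi k$ (since $e^{iq\vph}$ is invariant and $f_\vph$ only depends on $\vph$ modulo $2\pi$).
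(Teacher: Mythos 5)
Your proposal is correct and is essentially the same computation as the paper's: a change of variables in the defining integral followed by using $2\pi$-periodicity of the integrand to shift the interval of integration back to $[0,2\pi]$. You are actually slightly more careful than the paper, which performs the substitution silently and does not remark on the periodicity step.
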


\begin{proof}
	We have 
	$$e^{iq\vph}\wh{f}(q)=\frac{1}{2\pi}\int_{0}^{2\pi}f(\theta)e^{-iq(\theta-\vph)}d\theta=\frac{1}{2\pi}\int_{0}^{2\pi}f(\theta+\vph)e^{-iq\theta}d\theta,$$
	which is the desired equality. 
\end{proof}

This lemma and the prior discussion have the following consequence. 

\begin{proposition}
	Let $r$ and $R$ be as above where $r$ satisfies the hypothesis for the Eisenstein Series Formula. Let $r_{\vph}$ be the function defined on $\R/2\pi\Z$ corresponding to $R_{\vph}$. We have that 
	$$Z_{r_{\vph}}=\sum_{q=-\infty}^{\infty}\wh{r^{2s}}(4q)(E^{\sharp}.f_{4q})(\kappa_{\vph},s).$$
	Moreover, if $\vph=\frac{k\pi}{2}$ for $k\in \Z$, then $Z_{r}=Z_{r_{\vph}}$. 
\end{proposition}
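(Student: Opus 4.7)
The plan is to apply the simplified Eisenstein series formula to the rotated function $r_\vph$, rewrite the resulting Fourier coefficients of $(r_\vph)^{2s}$ using the harmonic-analysis lemma just proved, and then absorb the exponential phase that appears into the value of $(E^{\sharp}.f_{4q})$ at $\kappa_\vph$ via the earlier rotation identity $(E^{\sharp}.f_q)(\kappa_\vph,s) = e^{iq\vph}(E^{\sharp}.f_q)(I,s)$.

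First I would note that $r_\vph$ inherits $\alpha$-H\"{o}lder continuity from $r$ (with the same constant, since a shift in the argument does not affect the H\"{o}lder estimate), so the simplified Eisenstein series formula applies to $r_\vph$ and produces
$$Z_{r_\vph}(s) \;=\; \sum_{q=-\infty}^{\infty}\wh{(r_\vph)^{2s}}(4q)\,(E^{\sharp}.f_{4q})(I,s).$$
Next I would observe the pointwise identity $(r_\vph)^{2s}(\theta) = r(\theta+\vph)^{2s} = (r^{2s})_\vph(\theta)$, so the preceding harmonic-analysis lemma yields $\wh{(r_\vph)^{2s}}(4q) = e^{i(4q)\vph}\wh{r^{2s}}(4q)$. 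Substituting this into the display and applying the rotation identity term-by-term packages the phase into the Eisenstein series evaluated at $\kappa_\vph$, producing the stated formula.

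For the second claim, take $\vph = k\pi/2$. Then $e^{i(4q)(k\pi/2)} = e^{2\pi i k q} = 1$ for every integer $q$, so the phase factors in the first part collapse and one recovers the original simplified Eisenstein series expression for $Z_r(s)$. Equivalently, $\kappa_{k\pi/2}\in SL(2,\Z)$, so part III of the preceding symmetries proposition gives $(E^{\sharp}.f_{4q})(\kappa_{k\pi/2},s) = (E^{\sharp}.f_{4q})(I,s)$ directly. There is essentially no obstacle here: the argument is a bookkeeping exercise chaining three prior results, and the only point requiring care is the identification $(r_\vph)^{2s} = (r^{2s})_\vph$, which is immediate from the definitions and ensures that the harmonic-analysis lemma applies verbatim.
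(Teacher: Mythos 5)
Your proposal is correct and follows exactly the route the paper takes: the paper's own proof simply cites the simplified Eisenstein series formula, the rotation/symmetry proposition, and the Fourier-shift lemma, which are precisely the three ingredients you chain together (your explicit check that $r_\vph$ inherits the H\"{o}lder hypothesis is a small detail the paper omits). No issues.
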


\begin{proof}
	From Lemma 20, Proposition 21, and Lemma 24, the result immediately follows.  
\end{proof}

\tab This gives a first guess at a possible fibers for the Hlawka Zeta function as described in the first section: the subgroup of rotations in $D_{8}$, the dihedral group of order 8. In this light, we have a subgroup isomorphic to this group embedded in $SL(2,\Z)$: the group $\{I,\kappa_{\pi/2},\kappa_{\pi},\kappa_{3\pi/2}\}$, which is embedded in the maximal compact group $K=SO(2,\R)$ of rotations. One important note is that of course $O(2,\Z)$ is isomorphic to $D_{8}$, but the reflections are generated by $R=\begin{pmatrix}
-1 & 0 \\
0 & 1
\end{pmatrix}$, which can be seen to send $r\mapsto -r$ in the parametrization: of course then $-r<0$, which we try to avoid. We note that there is a natural action of $K$ on the set of $r$'s given by $(\kappa_{\vph}.r)(\theta)=r_{\vph}(\theta)$. What this proposition tells us is that this action corresponds with putting a $\kappa_{\vph}$ in the input of the Eisenstein Series, or vise versa. \\
\subsection*{Action of $GL(2,\R)$ on Functions}
\tab One may expand this view quite a bit, as there is a nice geometric interpretation of this as well. There is an action of $GL(2,\R)$ on $r$ given by the action of $GL(2,\R)$ on the graph of $r$. We will define this action precisely now. This will give an action of $GL(2,\R)$ on the Hlawka Zeta function. Throughout, we use a fact from geometry: if $D\subset \R^{2}$ is a star-shaped region with respect to the origin, then applying a non-singular linear transformation $g\in GL(2,\R)$ to this domain yields another star-shaped region with respect to the origin. \\
\tab First, let us establish some notation. Take $C(\R/2\pi\Z,(0,\infty))$ to be the set of continuous functions from $\R/2\pi\Z$ to $(0,\infty)$. Define the map $X:(0,\infty)\times \R/2\pi\Z \to \R^{2}\ssm\{(0,0)\}$ to be 
$$X(c,\vph)=\begin{pmatrix}
c\cos(\vph)\\
c\sin(\vph)
\end{pmatrix}$$
where $c\in (0,\infty)$. Also, define the map $\theta:\R^{2}\ssm \{(0,0)\}\to \R/2\pi\Z$ such that $\theta(v)$ is the angle between the positive $x$ axis and the ray from the origin to $v\in \R^{2}\ssm\{(0,0)\}$. Some initial observations about these maps are 
\begin{enumerate}
	\item First, $\theta(X(c,\vph))=\vph$, so $\theta$ undoes $X$ on the second component, and so $\theta(X(c,\vph))$ is independent of $c$. \\
	\item The map $X$ is a smooth diffeomorphism, and $\theta$ is surjective. In fact the restriction $\theta|_{\gm}$ is a smooth diffeomorphism as long as $\gm$ is a continuous closed curve which is the boundary of a star-shaped region with respect to the origin. Moreover, we have that if $\gm$ is given in polar coordinates as $\gm:\R/2\pi\Z \to (0,\infty)$, then $X(\gm(\vph),\theta|_{\gm}(\gm(\vph)\cos(\vph),\gm(\vph)\sin(\vph)))$ traces the graph of the curve $\gamma$. \\
	\item For all $g\in GL(2,\R)$ and $u,v\in \R^{2}\ssm\{(0,0)\}$, $\theta(v)=\theta(u)$ if and only if $\theta(gv)=\theta(gu)$. This is simply due to the fact that $g$ is a linear map. 
\end{enumerate}

\tab Let $\theta_{g,\gm}:\R/2\pi\Z\to \R/2\pi\Z$ be the map given by 
$$\theta_{g,\gm}(\vph)=\theta(gX(\gm(\vph),\vph))$$
where $\gm\in C(\R/2\pi\Z,(0,\infty))$. In fact, $\theta_{g,\gm}=\theta_{g,1}$ by observations above, but it will be crucial later that we have the auxiliary $\gm$.  

\begin{lemma}
	The map $\theta_{g,\gm}$ is a diffeomorphism for all $g\in GL(2,\R)$, $\gm\in  C(\R/2\pi\Z,(0,\infty))$. 
\end{lemma}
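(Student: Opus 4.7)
The plan is to exploit the three observations listed just before the lemma: first use observation 3 to dispose of the auxiliary function $\gm$, reducing to the case $\gm \equiv 1$, and then verify the diffeomorphism claim in that reduced case by factoring $\theta_{g,1}$ through the ellipse $g(S^{1})$ and invoking observation 2. To carry out the reduction, note that for any $\vph \in \R/2\pi\Z$ the points $X(\gm(\vph),\vph) = \gm(\vph)(\cos\vph,\sin\vph)$ and $X(1,\vph) = (\cos\vph,\sin\vph)$ both have angle $\vph$ by observation 1, so observation 3 forces $\theta(gX(\gm(\vph),\vph)) = \theta(gX(1,\vph))$, i.e.\ $\theta_{g,\gm} = \theta_{g,1}$ as functions on $\R/2\pi\Z$. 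This eliminates $\gm$ from the problem.

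Next I would factor $\theta_{g,1}$ as $\theta|_{E} \circ \Phi$, where $\Phi : \R/2\pi\Z \to \R^{2}\ssm\{(0,0)\}$ is defined by $\Phi(\vph) = g(\cos\vph,\sin\vph)$ and $E = \Phi(\R/2\pi\Z) = g(S^{1})$ is the image of the unit circle. Because $g \in GL(2,\R)$ is a nonsingular linear map fixing the origin and carrying the closed unit disk $\overline{B(0,1)}$ to a convex set not touching the origin, $E$ is a smooth closed curve bounding a star-shaped (indeed convex) region about the origin. Observation 2 then yields directly that $\theta|_{E} : E \to \R/2\pi\Z$ is a smooth diffeomorphism.

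It then remains to show $\Phi$ itself is a smooth diffeomorphism from $\R/2\pi\Z$ onto $E$. Smoothness is immediate, and bijectivity follows by composing the standard diffeomorphism $\vph \mapsto (\cos\vph,\sin\vph)$ from $\R/2\pi\Z$ to $S^{1}$ with the restriction $g|_{S^{1}}$, which is a bijection onto $E$ since $g$ is invertible. To get smoothness of the inverse, I would observe that $\Phi'(\vph) = g(-\sin\vph,\cos\vph)$ is nonzero for every $\vph$, because $\det g \neq 0$ and $(-\sin\vph,\cos\vph) \neq (0,0)$; the inverse function theorem then makes $\Phi$ a local diffeomorphism, and a bijective local diffeomorphism between compact one-manifolds is automatically a diffeomorphism. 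Composing $\Phi$ with $\theta|_{E}$ yields that $\theta_{g,1}$, hence $\theta_{g,\gm}$, is a diffeomorphism. The only real hurdle will be confirming cleanly that $E$ qualifies as the boundary of a star-shaped region so that observation 2 is applicable; once this geometric point is pinned down, the rest is a short composition argument.
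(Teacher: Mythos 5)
Your proposal is correct and follows essentially the same route as the paper: reduce to $\gm\equiv 1$ via the observations preceding the lemma, factor $\theta_{g,1}$ as $\theta$ restricted to the ellipse $g(S^{1})$ composed with the parameterization $\vph\mapsto g(\cos\vph,\sin\vph)$, and use that the ellipse bounds a star-shaped region about the origin so that $\theta$ restricted to it is a diffeomorphism. Your extra care with the inverse function theorem for the parameterization is a welcome elaboration of a step the paper simply asserts, but it is the same argument.
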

\begin{proof}
It suffices to consider $\gm=1$. We see that $\theta_{g}$ the composition $\theta\circ g\circ  X|_{\{1\}\times \R/2\pi\Z}$, where $g$ is simply considered as a linear map. It is readily seen that $X|_{\{1\}\times \R/2\pi\Z}$ is a diffeomorphism of $\{1\}\times \R/2\pi\Z$ onto $S^{1}$. Now, $g$ maps $S^{1}$ diffeomorphically onto the boundary of some ellipse $E_{g}$, and since this is star-shaped with respect to the origin, $\theta|_{\partial E_{g}}$ is a diffeomorphism.   
\end{proof}

Moreover, we also have:

\begin{lemma}
	If $g,h\in GL(2,\R)$, then $\theta_{g,1}\circ \theta_{h,1}=\theta_{gh,1}$
\end{lemma}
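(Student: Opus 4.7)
The plan is to unfold the definition of $\theta_{g,1}$ on both sides and reduce the identity to the statement that $\theta(gv)$ depends on the nonzero vector $v$ only through its direction, i.e.\ through $\theta(v)$. Concretely, the right-hand side is
$$\theta_{gh,1}(\vph) = \theta(gh\,X(1,\vph)),$$
while the left-hand side is
$$\theta_{g,1}(\theta_{h,1}(\vph)) = \theta\bigl(g\,X(1,\theta(h\,X(1,\vph)))\bigr).$$
Setting $v := h\,X(1,\vph)$, the assertion becomes $\theta(g\,X(1,\theta(v))) = \theta(gv)$.

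The key step I would prove first is that for any nonzero $v\in\R^{2}$, $X(1,\theta(v))=v/|v|$. Indeed, by the explicit polar formula for $X$ one has $X(|v|,\theta(v))=v$, and $X(c,\vph)=c\,X(1,\vph)$, so dividing by $|v|>0$ gives the claim. This is a cleaner reformulation of observation 1 from the excerpt: applying $X(1,\cdot)$ to $\theta(v)$ recovers the unit vector pointing along $v$.

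Applying $g$ and using its linearity, $gX(1,\theta(v))=(1/|v|)\,gv$, which is a strictly positive scalar multiple of the nonzero vector $gv$. Since $\theta$ is invariant under multiplication by positive scalars (a ray through the origin has a well-defined angle), we conclude $\theta(gX(1,\theta(v)))=\theta(gv)$. Plugging this back in with $v=hX(1,\vph)$ gives
$$\theta_{g,1}(\theta_{h,1}(\vph))=\theta(g\cdot hX(1,\vph))=\theta(ghX(1,\vph))=\theta_{gh,1}(\vph),$$
which is the desired identity.

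There is no real obstacle here; the only bookkeeping required is to verify that all vectors to which $\theta$ is applied are nonzero, which is automatic from $X(1,\vph)\neq(0,0)$ and the invertibility of $g$ and $h$. It is worth remarking that this lemma, together with the preceding one, is precisely what makes $g\mapsto\theta_{g,1}$ a left action of $GL(2,\R)$ on $\R/2\pi\Z$ by diffeomorphisms, which is presumably the structural fact the author wants to leverage in order to transport the linear $GL(2,\R)$-action on $\R^{2}$ to an action on the space of polar curves $C(\R/2\pi\Z,(0,\infty))$.
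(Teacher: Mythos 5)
Your proof is correct, and it reaches the identity by a slightly different route than the paper. The paper exploits the auxiliary parameter in $\theta_{g,\gm}$: it replaces $\theta_{g,1}$ by $\theta_{g,\epsilon_{h}}$, where $\epsilon_{h}$ is the polar parametrization of $h\cdot S^{1}$, and then checks that the middle composition $X|_{\epsilon_{h}}\circ\theta$ acts as the identity on the graph of $\epsilon_{h}$, so that the two linear maps concatenate to $gh$. You instead stay on the unit circle throughout and make the key fact explicit: $X(1,\theta(v))=v/|v|$, so $gX(1,\theta(v))$ is a positive scalar multiple of $gv$, and positive homogeneity of $\theta$ finishes the argument. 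Both proofs ultimately rest on the same observation (the paper's item 3 before the lemma: $\theta\circ g$ factors through $\theta$ because $g$ is linear), but your version is more direct and self-contained --- it avoids introducing the intermediate curve $\epsilon_{h}$ and the claim that $X|_{\epsilon_{h}}\circ\theta$ is the identity, which in the paper is only verified on points already lying on the graph of $\epsilon_{h}$. The trade-off is that the paper's formulation with the auxiliary $\gm$ is set up deliberately (the author remarks it "will be crucial later"), since the same bookkeeping reappears when the action $g\cdot r$ on polar functions is defined in the following proposition; your normalization argument proves the lemma cleanly but does not by itself set up that later machinery. Your closing remark that the lemma makes $g\mapsto\theta_{g,1}$ a left action by diffeomorphisms, which is then transported to an action on $C(\R/2\pi\Z,(0,\infty))$, is exactly the structural use the paper makes of it.
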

\begin{proof}
	We will change $\theta_{g,1}$ to $\theta_{g,\epsilon_{h}}$, where $\epsilon_{h}$ is the (polar) parameterization of the ellipse determined by $h\cdot S^{1}$ in $\R^{2}$ which begins at the point on the positive $x$ axis. We use the same idea as before: this composition can be broken down into 
	$$\theta \circ g \circ X|_{\epsilon_{h}(\R/2\pi/Z)\times \R/2\pi\Z}\circ \theta \circ h\circ  X|_{\{1\}\times \R/2\pi\Z}.$$
	It suffices to show that the maps  $X|_{\epsilon_{h}(\R/2\pi/Z)\times \R/2\pi\Z}$ and $\theta$ compose to the identity. The input here is the graph of $\epsilon_{h}$. Now,
	$$X|_{\epsilon_{h}(\R/2\pi/Z)\times \R/2\pi\Z}\circ \theta(\epsilon_{h}(\vph)\cos(\vph),\epsilon(\vph)\sin(\vph))= X|_{\epsilon_{h}(\R/2\pi/Z)\times \R/2\pi\Z}(\theta(\epsilon_{h}(\vph)\cos(\vph),\epsilon(\vph)\sin(\vph)))$$
	$$=X(\epsilon_{h}(\vph),\theta(\epsilon_{h}(\vph)\cos(\vph),\epsilon(\vph)\sin(\vph))),$$ which we have already noted is the graph of $\epsilon_{h}$.
	Now since $g\circ h=gh$ by definition, we have proven the lemma.
\end{proof}
From here, we will drop the $\gm$ from the $\theta_{g,\gm}$ notation without loss of generality. Now, we may state the following proposition. 

\begin{proposition}
	For all $r\in C(\R/2\pi\Z,(0,\infty))$, and $g\in GL(2,\R)$, there is a unique map $g\cdot r\in C(\R/2\pi\Z,(0,\infty))$ with the property that 
	$$gX(r(\vph),\vph)=X((g\cdot r)(\theta_{g}(\vph)),\theta_{g}(\vph))$$
	for all $\vph\in \R/2\pi\Z$. Moreover, the assignment $(g,r)\mapsto g\cdot r$ is a group action of $GL(2,\R)$ on $C(\R/2\pi\Z,(0,\infty))$. 
\end{proposition}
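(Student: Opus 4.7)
The plan is to construct $g \cdot r$ by solving the defining equation pointwise for its radial component, verify continuity and positivity, and then check the two group-action axioms (identity and associativity) using the composition law for $\theta_g$ established in Lemma 27.

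First I would establish uniqueness and existence. Since $\theta_g\colon \R/2\pi\Z \to \R/2\pi\Z$ is a diffeomorphism by Lemma 26, given any $\psi \in \R/2\pi\Z$ we may set $\vph := \theta_g^{-1}(\psi)$. The vector $v := gX(r(\vph),\vph)$ is nonzero (as $g$ is invertible and $r(\vph)>0$), so it has a unique representation as $X(\rho,\theta(v))$ with $\rho = |v| > 0$ and $\theta(v) \in \R/2\pi\Z$. By the definition of $\theta_g$, we have $\theta(v) = \theta_g(\vph) = \psi$, so the defining equation forces
\[
(g\cdot r)(\psi) \;=\; \bigl| gX\bigl(r(\theta_g^{-1}(\psi)),\, \theta_g^{-1}(\psi)\bigr)\bigr|.
\]
This is the unique candidate. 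It is continuous as a composition of continuous maps ($\theta_g^{-1}$, $r$, $X$, the linear map $g$, and the norm), and it is strictly positive by invertibility of $g$. Hence $g\cdot r \in C(\R/2\pi\Z,(0,\infty))$ and the defining equation holds by construction.

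Next I would verify the two group-action axioms. For the identity $I \in GL(2,\R)$, clearly $\theta_I = \mathrm{id}$ and the defining equation reduces to $X(r(\vph),\vph) = X((I\cdot r)(\vph),\vph)$, forcing $I\cdot r = r$ by the injectivity of $X$ on $(0,\infty)\times \R/2\pi\Z$. For associativity, fix $g,h \in GL(2,\R)$ and $r$, and let $\vph \in \R/2\pi\Z$. Applying the defining equation twice, first to $h\cdot r$ and then to $g\cdot(h\cdot r)$, gives
\[
g\bigl(hX(r(\vph),\vph)\bigr) \;=\; g\,X\bigl((h\cdot r)(\theta_h(\vph)),\theta_h(\vph)\bigr) \;=\; X\bigl((g\cdot(h\cdot r))(\theta_g(\theta_h(\vph))),\,\theta_g(\theta_h(\vph))\bigr).
\]
On the other hand the defining equation for $gh$ yields
\[
(gh)X(r(\vph),\vph) \;=\; X\bigl(((gh)\cdot r)(\theta_{gh}(\vph)),\,\theta_{gh}(\vph)\bigr).
\]
By Lemma 27, $\theta_g \circ \theta_h = \theta_{gh}$, so the angular coordinates on the two right-hand sides agree. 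Since both left-hand sides are the same vector and $X(\rho,\psi)$ determines $\rho$ uniquely from its angular coordinate $\psi$, the radial coordinates must agree at $\theta_{gh}(\vph)$. As $\vph$ ranges over $\R/2\pi\Z$, so does $\theta_{gh}(\vph)$ (Lemma 26), hence $(gh)\cdot r = g\cdot(h\cdot r)$ as functions.

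The main obstacle is really just bookkeeping: one must be careful to distinguish the ``input'' angle $\vph$ from the ``output'' angle $\psi = \theta_g(\vph)$, since $g\cdot r$ lives on the output side. Everything else (continuity, positivity, invertibility of $\theta_g$) is handed to us by the preceding lemmas, and the crucial cocycle identity $\theta_g \circ \theta_h = \theta_{gh}$ is exactly the content of Lemma 27, which is what makes the associativity computation go through cleanly.
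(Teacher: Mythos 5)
Your proposal is correct and follows essentially the same route as the paper: define $(g\cdot r)(\psi)$ as the norm $|gX(r(\theta_g^{-1}(\psi)),\theta_g^{-1}(\psi))|$ using the invertibility of $\theta_g$ from Lemma 26, get continuity by composition, and verify the identity and associativity axioms via the cocycle relation $\theta_g\circ\theta_h=\theta_{gh}$ from Lemma 27. Your write-up is in fact slightly more careful than the paper's on the uniqueness point (injectivity of $X$ on $(0,\infty)\times\R/2\pi\Z$) and on distinguishing the input angle from the output angle, but the substance is identical.
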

\begin{proof}
	It is clear that $\theta(gX(r(\vph),\vph))=\theta_{g}(\vph)$ by definition. Now, $|gX(r(\vph),\vph)|$ is a function of $\vph$, and is continuous since $r$ is. Now, since $\theta_{g}$ is a diffeomorphism, we may write $|gX(r(\vph),\vph)|$ in $\theta_{g}(\vph)$ coordinates by substituting $\theta_{g^{-1}}(\theta_{g}(\vph))$ for $\vph$. This gives a continuous function $$|gX(r(\theta_{g^{-1}}(\theta_{g}(\vph))), \theta_{g^{-1}}(\theta_{g}(\vph)))|=(g\cdot r)(\theta_{g}(\vph))$$ in terms of $\theta_{g}(\vph)$.   \\
	\tab To show that this is a group action, note first that for the identity $I\in GL(2,\R)$ we have $\theta_{I}(\vph)=\vph$, so that $IX(r(\vph),\vph)=X(r(\vph),\vph)$ so $r=I\cdot r$. Let $g,h\in GL(2,\R)$. On one hand,
	$$ghX(r(\vph),\vph)=X((gh\cdot r)(\theta_{gh}(\vph)),\theta_{gh}(\vph))$$
	and on the other 
	$$g(hX(r(\vph),\vph))=gX((h\cdot r)(\theta_{h}(\vph)),\theta_{h}(\vph))=X((g\cdot (h\cdot r))(\theta_{g}\theta_{h}(\vph)), \theta_{g}\theta_{h}(\vph))$$
	$$=X((g\cdot (h\cdot r))(\theta_{gh}(\vph)),\theta_{gh}(\vph) )$$
	so that $gh\cdot r=g\cdot(h\cdot r)$. 
\end{proof}

\tab In general, computing $\theta_{g}$ for a random element of $GL(2,\R)$, and consequently $g\cdot r$, can be rather difficult. However, we have already seen that for rotation matrices $\kappa_{\vph}$, this re-parameterization is just $\theta_{\kappa_{\vph}}(\theta)=\theta+\vph$ and $(\kappa_{\vph}\cdot r)=r$. Furthermore, this can be done in general using the following decomposition.

\begin{proposition}[Cartan Decomposition]
	Let $G=GL(2,\R)^{+}$ (or $GL(2,\R)$) and $K=SO(2,\R)$ (or $K=O(2,\R)$). Every double coset in $K\backslash G / K$ has a unique representative 
	$$\begin{pmatrix}
	d_{1} & 0 \\
	0 & d_{2} 
	\end{pmatrix}$$
	with $d_{i}\in R$ and $d_{1}\geq d_{2} > 0$. 	
\end{proposition}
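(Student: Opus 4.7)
The plan is to prove this as a polar-type decomposition (essentially the singular value decomposition for $2\times 2$ matrices) by applying the spectral theorem to the symmetric positive-definite matrix $g^{T}g$. I will work out the $G=GL(2,\R)^{+}$, $K=SO(2,\R)$ case in detail; the $GL(2,\R)/O(2,\R)$ version follows from the same argument with even less attention to determinants.

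For existence, given $g\in GL(2,\R)^{+}$, the matrix $g^{T}g$ is real symmetric and positive definite. By the spectral theorem there is an orthogonal $P$ with $P^{T}(g^{T}g)P = \on{diag}(\lambda_{1},\lambda_{2})$ and $\lambda_{1}\geq \lambda_{2} > 0$. I can force $P\in SO(2,\R)$: if $\det P = -1$, replace one column of $P$ by its negative, which preserves the diagonalization and the eigenvalue ordering while fixing the sign. Setting $d_{i}=\sqrt{\lambda_{i}}$ and $D=\on{diag}(d_{1},d_{2})$, I define $U := gPD^{-1}$. A direct computation gives $U^{T}U = D^{-1}P^{T}(g^{T}g)PD^{-1} = D^{-1}D^{2}D^{-1} = I$, so $U$ is orthogonal; and $\det U = (\det g)(\det P)(\det D^{-1}) > 0$ together with orthogonality places $U$ in $SO(2,\R)$. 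Then $g = UDP^{T}$, so $U^{T}gP = D$ realizes $g$ in the double coset $KDK$.

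For uniqueness, suppose two diagonal matrices $D_{1}, D_{2}$ with strictly positive entries arranged in decreasing order represent the same double coset, say $D_{2}=k_{1}D_{1}k_{2}$ for some $k_{1},k_{2}\in K$. Then
$$D_{2}^{T}D_{2} = k_{2}^{T}D_{1}^{T}k_{1}^{T}k_{1}D_{1}k_{2} = k_{2}^{T}D_{1}^{2}k_{2},$$
so $D_{1}^{2}$ and $D_{2}^{2}$ are conjugate symmetric matrices and share the same multiset of eigenvalues. Since each $D_{i}$ has positive diagonal entries arranged in decreasing order, this forces $D_{1}=D_{2}$.

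The only step requiring real care is the determinant bookkeeping that keeps $P$ (and consequently $U$) in $SO(2,\R)$ rather than merely $O(2,\R)$; in the $GL(2,\R)/O(2,\R)$ version this obstacle disappears and the same construction goes through verbatim. Everything else is routine linear algebra applied to the positive-definite matrix $g^{T}g$.
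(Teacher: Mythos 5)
Your proof is correct. The paper itself does not supply an argument here --- it simply refers the reader to Bump's book --- so your self-contained linear-algebra proof is genuinely additional content rather than a variant of the paper's reasoning. Your route is the standard singular value decomposition: apply the spectral theorem to the positive-definite matrix $g^{T}g$ to get $P$ and $D$, then check that $U=gPD^{-1}$ is orthogonal and lands in $SO(2,\R)$ by the determinant bookkeeping, and get uniqueness by observing that $D_{2}=k_{1}D_{1}k_{2}$ forces $D_{1}^{2}$ and $D_{2}^{2}$ to be conjugate and hence to have the same ordered spectrum. All the steps check out: negating one column of $P$ does flip $\det P$ from $-1$ to $+1$ without disturbing the diagonalization or the eigenvalue ordering, and $\det U=(\det g)(\det P)(\det D)^{-1}>0$ together with $U^{T}U=I$ pins $U$ in $SO(2,\R)$. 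What your approach buys is an elementary, citation-free proof tailored to the $2\times 2$ case; what the citation to Bump buys the paper is access to the general $GL(n)$ statement and its representation-theoretic context, neither of which is needed for the applications in this paper (the action of $\bar a=\operatorname{diag}(a,1)$ on $r$ and the ellipse computation). One cosmetic remark: the phrase ``while fixing the sign'' in your existence step would read better as ``thereby changing the sign of $\det P$ to $+1$,'' since the point is precisely that the sign of the determinant is what gets corrected.
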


\begin{proof}
	We refer to \cite{Bump1} for the proof.   
\end{proof}

This allows us to write every element of $GL(2,\R)^{+}$ as 
$$g=\kappa_{\vph_{1}}
\begin{pmatrix}
d_{1} & \\
0 & d_{2} 
\end{pmatrix}
\kappa_{\vph_{2}}$$
where $\kappa_{\vph_{i}}\in SO(2,\R)$. A further simplification yields 
$$g=d_{2}\kappa_{\vph_{1}}
\begin{pmatrix}
	a & 0 \\
	0 & 1 
\end{pmatrix}
\kappa_{\vph_{2}}$$
where $d_{2}$ is a scalar matrix with $d_{2}$ as the scalar and $a=d_{1}/d_{2}>1$. This means that computing the action of $g$ on $r$ is tantamount to finding the action of $\bar{a}=\begin{pmatrix}
a & 0 \\
0 & 1 \\
\end{pmatrix}$ on $r$, since the other actions are already well understood. This can be done with some trigonometry We first compute $$\theta_{\bar{a}}(\vph)=\begin{cases}
\arctan(\sin(\vph)/a\cos(\vph)) & \vph\in [0,\pi/2]\cup [3\pi/2,2\pi] \\
\pi+\arctan(\sin(\vph)/a\cos(\vph)) & \vph\in [\pi/2,3\pi/2]
\end{cases}$$
Now the length of a vector in $\bar{a}X(r(\vph),\vph)$ is $$r(\vph)(a^{2}\cos^{2}(\vph)+\sin^{2}(\vph))^{1/2}$$
and writing this in $\theta_{\bar{a}}(\vph)=\vph'$ coordinates, 
$$(\bar{a}\cdot r)(\vph')=
\begin{cases}
r\left(\arctan\left(\frac{a\sin(\vph')}{\cos(\vph')}\right)\right)\left(\frac{a^{2}}{a^{2}+(1-a^{2})\cos^{2}(\vph')}\right)^{1/2} & \vph'\in [0,\pi/2]\cup [3\pi/2,2\pi]  \\
r\left(\arctan\left(\frac{a\sin(\vph')}{\cos(\vph')}\right)+\pi\right)\left(\frac{a^{2}}{a^{2}+(1-a^{2})\cos^{2}(\vph')}\right)^{1/2} & \vph'\in [\pi/2,3\pi/2].
\end{cases}$$
Notably, if $r$ is the constant function, these two pieces are equal. This will be used later.\\
\tab Of course, in our equations for the Hlawka Zeta function, one needs to compute the Fourier Coefficients for $r^{2s}$, and so to compute $Z_{(\bar{a}\cdot r)}(s)$ becomes a computationally very difficult, depending on the $r$ in question. We essentially do this for when $r$ is a constant function later in the paper. Currently, we only have the following conjecture.

\begin{conjecture}
	Let $r\in C(\R/2\pi\Z,(0,\infty))$ be a non constant function where infinitely many of the Fourier coefficients $\hat{r}(n)$ are non-zero. Then $Z_{r}(s)=Z_{g\cdot r}(s)$ if and only if $g\in O(2,\Z)$, i.e. $g$ is a isometry of the lattice $\Z^{2}$. 
\end{conjecture}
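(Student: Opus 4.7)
My plan is to split the conjecture into the easy sufficiency direction and the hard necessity direction. For sufficiency, observe that Proposition 28 gives $D_{g\cdot r} = g(D_r)$, so a lattice point $(m,n)$ lies in $tD_{g\cdot r}$ iff $g^{-1}(m,n) \in tD_r$, whence $t_{g\cdot r}(m,n) = t_r(g^{-1}(m,n))$. Substituting into the definition and re-indexing by $v = g^{-1}(m,n)$ gives
$$Z_{g\cdot r}(s) = \sum_{v \in g^{-1}\Z^{2} \setminus \{0\}} t_{r}(v)^{-2s}.$$
For $g \in O(2,\Z) \subset GL(2,\Z)$ we have $g^{-1}\Z^{2} = \Z^{2}$, so the right-hand side equals $Z_{r}(s)$. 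I would remark here that the same argument in fact works for every $g \in GL(2,\Z)$, not only for $g \in O(2,\Z)$, suggesting that the conjecture's conclusion should be relaxed to ``$g \in GL(2,\Z)$'' or that an additional constraint on the action is intended.

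For necessity, suppose $Z_{r} = Z_{g\cdot r}$. Both sides are $t$-Dirichlet series with discrete exponent sets — the proof of Proposition 2 extends verbatim to any full-rank lattice in $\R^{2}$ — and the uniqueness of such series established via the Mellin inversion in the proof of Theorem 10 forces the multisets $\{t_{r}(v) : v \in \Z^{2}\setminus 0\}$ and $\{t_{r}(v) : v \in g^{-1}\Z^{2}\setminus 0\}$ to coincide. Comparing the leading-order asymptotics of the counting functions — $A_{r}(x)\sim \on{area}(D_{r})\,x^{2}$ versus $A_{g\cdot r}(x)\sim \on{area}(D_{r})\,|\det g|\,x^{2}$, the extra factor arising because $g^{-1}\Z^{2}$ has covolume $|\det g|^{-1}$ — immediately forces $|\det g|=1$. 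The substantive step is to upgrade the multiset equality, together with $|\det g|=1$, to the set equality $g^{-1}\Z^{2}=\Z^{2}$. My plan is to apply the Cartan decomposition $g = \kappa_{\vph_{1}}\,a\,\kappa_{\vph_{2}}$ with $a=\diag(d_{1},d_{2})$, absorb the rotation factors via Lemma 24 (which twists the $q$-th Fourier coefficient of $r^{2s}$ by $e^{iq\vph}$), and use the hypothesis that infinitely many $\wh{r^{2s}}(4q)$ are nonzero to constrain the $\vph_{i}$ to lie in $(\pi/2)\Z$. This reduces matters to showing that a non-scalar diagonal $a$ cannot satisfy $Z_{a\cdot r}=Z_{r}$.

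The main obstacle is precisely this final reduction. Unlike rotations, the action of $a=\diag(d_{1},d_{2})$ on $r$ is governed by the cumbersome trigonometric reparametrization displayed just before the conjecture, and it does \emph{not} act diagonally on Fourier coefficients — it mixes them. One clean route would be to establish a linear independence statement for the Eisenstein functions $(E^{\sharp}.f_{4q})(I,\cdot)$ as meromorphic functions of $s$; granted that, the identity $Z_{r}=Z_{a\cdot r}$ forces $\wh{(a\cdot r)^{2s}}(4q)=\wh{r^{2s}}(4q)$ for every $q$, and the Fourier hypothesis on $r$ should then yield a contradiction whenever $a$ is non-scalar. The required independence — perhaps accessible via the constant-term decomposition and functional equation in \cite{Bump1} — is the key technical hurdle. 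A parallel geometric attack would analyze $A_{r}(x)-A_{a\cdot r}(x)$ directly, seeking a specific $x$ at which the lattices $\Z^{2}$ and $a^{-1}\Z^{2}$ contribute differently; this would likely demand Huxley-type discrepancy machinery adapted to two lattices simultaneously, and seems considerably harder to execute cleanly.
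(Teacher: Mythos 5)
The paper offers no proof of this statement: it is posed as an open conjecture (and the Final Remarks reiterate that the stabilizer question ``is still open''), so there is nothing of the paper's to compare your argument against; I can only assess the proposal on its own terms. Your sufficiency computation is correct, and the observation you make only in passing is actually the crux: since $D_{g\cdot r}=gD_{r}$ gives $t_{g\cdot r}(m,n)=t_{r}(g^{-1}(m,n))$, the multiset $\{t_{g\cdot r}(v)\}$ equals $\{t_{r}(v)\}$ for \emph{every} $g\in GL(2,\Z)$, because $g^{-1}$ permutes $\Z^{2}\ssm\{(0,0)\}$. This does not merely ``suggest relaxing the conclusion'' --- it outright refutes the ``only if'' direction. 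Take the shear $g=\begin{pmatrix}1&1\\0&1\end{pmatrix}$: then $Z_{g\cdot r}=Z_{r}$ for any admissible $r$, yet $g\notin O(2,\Z)$. (This is the familiar $GL(n,\Z)$-invariance of the Epstein zeta function, already visible in the paper's ellipse section, where $Z(r,s)=\mathcal{E}({}^{T}g_{e}^{-1}g_{e}^{-1},s)$.) So the conjecture as stated is false, and no proof of it can exist; the honest conclusion of your first paragraph is a disproof, not a lemma.

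Your necessity sketch therefore cannot be completed, and the shear example pinpoints where it breaks. Writing $g=\kappa_{\vph_{1}}a\kappa_{\vph_{2}}$ with $a$ diagonal, a generic element of $SL(2,\Z)$ has non-scalar $a$ and rotation angles $\vph_{i}$ not in $\frac{\pi}{2}\Z$, yet it stabilizes $Z_{r}$; hence the step ``constrain the $\vph_{i}$ to lie in $(\pi/2)\Z$ and then rule out non-scalar $a$'' is arguing toward a false conclusion, so at least one of its ingredients --- the proposed linear independence of the $(E^{\sharp}.f_{4q})(I,\cdot)$ forcing coefficientwise matching, or the claimed contradiction from the hypothesis on $\wh{r}(n)$ --- must fail. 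The salvageable parts are genuine: the Perron/Mellin recovery of the multiset $\{t(m,n)\}$ from $Z_{r}$, and the area asymptotic $A_{g\cdot r}(x)\sim|\det g|\operatorname{area}(D_{r})x^{2}$ forcing $|\det g|=1$, are both sound and would be the right opening moves for the \emph{corrected} conjecture, namely that the stabilizer of $Z_{r}$ for generic $r$ is exactly $GL(2,\Z)$. That corrected necessity direction remains open and is not settled by your sketch.
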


Note that if the condition on Fourier coefficients is dropped, then clearly $r$ could have a period of less than $2\pi$, so that $\kappa_{\vph}$ would stabilize $r$ for values other that $\frac{k\pi}{2}$.

\subsection*{Functional Equation for the Eisenstein Series in Question}

\tab In searching for the functional equation for the Hlawka Zeta Function, we would like to somehow leverage the fact that each of the Eisenstein series that are in the infinite sum have a functional equation. We look at these Eisenstein series separately for a moment to do this. Most of this is pulled from \cite{Bump1}, Section 3.7, and will need to use adelic methods. For this reason, some of the finer points are left out. \\
\tab Let $\A$ be the adele ring of $\Q$, and let $\chi_{i}$ for $i\in\{1,2\}$ be two quasichararcters of $\A^{\times}/\Q^{\times}$. Then the space $V_{\chi_{1},\chi_{2}}$ is defined to be all smooth and $K$-finite (where $K$ is the maximal compact subgroup of $GL(2,\A)$ defined in 3.3 of \cite{Bump1}) functions on $GL(2,\A)$ which satisfy 
$$f\left( \begin{pmatrix}
y_{1} & x \\
0 & y_{2}
\end{pmatrix}g\right)=|y_{1}|^{1/2}|y_{2}|^{-1/2}\chi_{1}(y_{1})\chi_{2}(y_{2})f(g).$$
This is similar to our definition of $f_{q}(g,s_{1},s_{2})$. In fact, we see that if $\chi_{i}(y)=|y|^{s_{i}}$, then for $g\in GL(2,\R)^{+}$, $f_{q}$ satisfies these conditions if $q$ is even. This assumption is reasonable since the Eisenstein series which we will eventually define are zero on odd $q$ anyways, and we will henceforth assume this. We may construct a function $F_{q}:GL(2,\A)\to \C$ which agrees with $f_{q}$ on $GL(2,\R)^{+}$ in the following way. Since $K=O(2,\R)\times \prod_{p} K_{p}$ where $K_{p}=GL(2,\Z_{p})$, by the Iwasawa decomposition it suffices to give $F_{q}(k)$ for $k\in K$, but this is reduced to giving
$$F_{q}(k)=F_{q,\infty}(R^{i}\kappa_{\vph})\cdot \prod_{p} F_{q,p}(k_{p})$$
for $R=\begin{pmatrix}
-1 & 0 \\
0 & 1 
\end{pmatrix}$, $\kappa_{\vph}\in O(2,\R)$, and $k_{p}\in K_{p}$. Here we may set $F_{q,p}$ to be the spherical vector normalized so that $F_{q,p}(k_{p})=1$ for all $k_{p}\in K_{p}$ for all $p$ prime places, and $F_{q,\infty}(R^{i}\kappa_{\vph})=(-1)^{i}f_{q}(\kappa_{\vph})$. Thus 
$$F_{q}(k)=(-1)^{i}f_{q}(\kappa_{\vph})$$
for all $k\in K$, so for $g\in GL(2,\R)^{+}$, $F_{q}(g)$ agrees with $f_{q}(g)$. We note that the $K$-finiteness and smoothness then follow from that of $f_{q}$. \\
\tab We now define the Eisenstein Series for $f$ in this more general context. 
\begin{definition}
	For $f\in V_{\chi_{1},\chi_{2}}$, define the \underline{Eisenstein Series acting on $f$} to be 
	$$(E.f)(g)=2\sum_{\gamma\in B(Q)\backslash GL(2,\Q)}f(\gamma g)$$
	for $g\in GL(2,\A)$.  
\end{definition} 
We note that for any element $\gamma\in GL(2,\R)$, there exists $\beta\in B(\Q)$ such that $\beta \gamma \in SL(2,\Z)$. Thus we may reindex this sum as 
$$(E.f)(g)=2\sum_{\gamma\in \Gamma^{\infty}\backslash SL(2,\Z)}f(\gamma g).$$
where $\Gamma^{\infty}=SL(2,\Z)\cap B(\Q)$. This matches our definition from before of $(E.f_{q})(g,s_{1},s_{2})$, i.e. $(E.f_{q})(g,s_{1},s_{2})=(E.F_{q})(g)$. \\
\tab There a couple of things to note in this case about the $L$-functions and Gamma functions: namely, that in the notation of \cite{Bump1}, our $\xi_{i}$ are trivial so that the complete $L$-function $L(2s,\xi_{1}\xi_{2}^{-1})$ has $L_{\infty}(2s,1)=\pi^{-s}\Gamma(s)$ at the real place and
$$L(2s,1)=\pi^{-s}\Gamma(s)\prod_{p\text{ prime}} (1-p^{-2s})^{-1}=\pi^{-s}\Gamma(s)\zeta(2s).$$
Furthermore, the Gamma factor $\gamma_{\infty}(s,1,\psi_{\infty})=\pi^{s-1/2}\frac{\Gamma((1-s)/2)}{\Gamma(s/2)}$. \\
\tab It is now possible to state the following precisely: 
\begin{theorem}
	The Eisenstein series $(E^{\sharp}.f_{q})(g,s)$ satisfies the following functional equation
	\begin{equation}
		\pi^{-s}\Gamma(s)(E^{\sharp}.f_{q})(g,s)=\pi^{-(1-s)}\Gamma(1-s)\frac{\Gamma(s)^{2}(-i)^{q}}{\Gamma(s+q/2)\Gamma(s-q/2)}(E^{\sharp}.f_{q})(g,1-s)
	\end{equation}
\end{theorem}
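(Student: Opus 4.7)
The plan is to deduce this functional equation from the general functional equation for adelic Eisenstein series on $GL(2)$, using the identification $(E.f_{q})(g,s_{1},s_{2})=(E.F_{q})(g)$ set up just before the theorem statement. The paper has already laid the groundwork by giving $F_{q}\in V_{\chi_{1},\chi_{2}}$ with $\chi_{i}(y)=|y|^{s_{i}}$, spherical at every finite place and essentially equal to $f_{q}$ at the real place.

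The first step is to invoke the standard theorem from \cite{Bump1}, Section 3.7, which says that for each $f\in V_{\chi_{1},\chi_{2}}$ there is a global intertwining operator
$$M(s_{1},s_{2}):V_{\chi_{1},\chi_{2}}\longrightarrow V_{\chi_{2},\chi_{1}}$$
defined by the standard unipotent integral, and that the (completed) Eisenstein series satisfies
$$E^{\sharp}(g,f)=E^{\sharp}(g,M(s_{1},s_{2})f).$$
Here the completion is by the $L$-factor $L(2s,\xi_{1}\xi_{2}^{-1})$; with our trivial characters and with $s=(s_{1}-s_{2}+1)/2$, this completion factor is $\pi^{-s}\Gamma(s)\zeta(2s)$, which is why $(E^{\sharp}.f_{q})=\zeta(2s)(E.f_{q})$ acquires the prefactor $\pi^{-s}\Gamma(s)$ on the left of the claimed identity and $\pi^{-(1-s)}\Gamma(1-s)$ on the right when we swap $s_{1}$ and $s_{2}$.

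Next I would decompose $M(s_{1},s_{2})=\bigotimes_{v}M_{v}(s_{1},s_{2})$ and compute each local intertwiner on $F_{q}$. At every finite prime, $F_{q,p}$ is the normalized spherical vector, and the Gindikin--Karpelevich computation (also in \cite{Bump1}) gives
$$M_{p}(s_{1},s_{2})F_{q,p}=\frac{L_{p}(2s,1)}{L_{p}(2s+1,1)}\,F_{q,p}'$$
where $F_{q,p}'$ is the spherical vector in $V_{\chi_{2},\chi_{1}}$; taking the product over all finite primes and absorbing the $L$-factors into the completion gives the ratio $\zeta(2s)/\zeta(2(1-s))$ which is precisely what is needed to pass between $(E^{\sharp}.f_{q})(g,s)$ and $(E^{\sharp}.f_{q})(g,1-s)$. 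The archimedean place is the real work: one must evaluate
$$M_{\infty}(s_{1},s_{2})f_{q}(g)=\int_{\R}f_{q}\!\left(\begin{pmatrix}0 & -1\\ 1 & 0\end{pmatrix}\begin{pmatrix}1 & x\\ 0 & 1\end{pmatrix}g\right)dx$$
on the $K$-finite vector $f_{q}$ and identify the result as a scalar multiple of the corresponding $f_{q}$ in $V_{\chi_{2},\chi_{1}}$. By Schur's lemma (or the fact that the weight-$q$ subspace of the principal series is one-dimensional), this must be a scalar, and evaluating both sides at $g=I$ reduces the problem to a classical beta/gamma integral; the answer works out to $(-i)^{q}\,\Gamma(s)^{2}/(\Gamma(s+q/2)\Gamma(s-q/2))$, which is exactly the prefactor appearing on the right-hand side of the claimed equation.

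The hard part will be the explicit archimedean calculation: evaluating the intertwining integral on $f_{q}$ and verifying that the constant comes out to $(-i)^{q}\Gamma(s)^{2}/(\Gamma(s+q/2)\Gamma(s-q/2))$. One can perform it either by writing the integrand in Iwasawa coordinates and recognizing a beta integral, or, more cleanly, by matching it to the standard computation of the intertwining operator on weight vectors of the principal series for $GL(2,\R)$ (the relevant formula is again in \cite{Bump1}, Proposition 3.1.5 and surrounding discussion, where one sees exactly these Gamma ratios). Once that constant is pinned down, the functional equation assembles by combining the local factors with the completion $\pi^{-s}\Gamma(s)\zeta(2s)$ and applying $E^{\sharp}(g,f)=E^{\sharp}(g,M(s_{1},s_{2})f)$.
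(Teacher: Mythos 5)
Your proposal follows essentially the same route as the paper's proof: both invoke the adelic functional equation $E^{\sharp}(g,F_{q})=E^{\sharp}(g,\widetilde{F_{q}})$ from \cite{Bump1}, Section 3.7, take the normalized spherical vector at every finite place so the local factors assemble into $\zeta(2s)/\zeta(2-2s)$, and reduce the archimedean content to the known intertwining constant on the weight-$q$ vector (the paper cites Proposition 2.6.3 of \cite{Bump1} for the raw constant $(-i)^{q}\pi^{1/2}\Gamma(s)\Gamma(s-\tfrac{1}{2})/(\Gamma(s+q/2)\Gamma(s-q/2))$; the $\Gamma(s)^{2}$ form you quote is what remains after dividing by the ratio of completed $L$-factors, exactly as in the paper's normalization of $\widetilde{F_{q}}$ at the real place). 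The final assembly of these local data into the stated equation is the same computation the paper performs.
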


\begin{proof}
	We cite \cite{Bump1} for most of the heavy lifting in this proof. There a couple of things to note in this case about the $L$-functions and Gamma functions: namely, that in the notation of \cite{Bump1}, our $\xi_{i}$ are trivial so that the complete $L$-function $L(2s,\xi_{1}\xi_{2}^{-1})$ has $L_{\infty}(2s,1)=\pi^{-s}\Gamma(s)$ at the real place and
	$$L(2s,1)=\pi^{-s}\Gamma(s)\prod_{p\text{ prime}} (1-p^{-2s})^{-1}=\pi^{-s}\Gamma(s)\zeta(2s).$$
	Furthermore, the Gamma factor $\gamma_{\infty}(s,1,\psi_{\infty})=\pi^{s-1/2}\frac{\Gamma((1-s)/2)}{\Gamma(s/2)}$.
	Setting 
	$$\widetilde{F_{q}}_{\nu}=
	\begin{cases}
	\widetilde{F}^{\circ}_{p_{\nu}} & \nu \text{ a prime place} \\
	\frac{\pi^{-s}\Gamma(s)}{\pi^{(2-2s)-1/2}\Gamma(s-\frac{1}{2})\Gamma(1-s)^{-1}\pi^{-(1-s)}\Gamma(1-s)}M(s)f_{q} & \nu \text{ not a prime place,}
	\end{cases}$$
	where $$M(s)f_{q,s}=(-i)^{q}\pi^{1/2}\frac{\Gamma(s)\Gamma(s-\frac{1}{2})}{\Gamma(s+q/2)\Gamma(s-q/2)}f_{q,1-s}$$
	coming from Proposition 2.6.3 of \cite{Bump1}. Thus the real place of $\widetilde{F}_{q}$ is 
	$$\frac{\Gamma(s)^{2}(-i)^{q}}{\Gamma(s+q/2)\Gamma(s-q/2)}f_{q,1-s}$$ 
	Furthermore, setting
	 $$E^{\sharp}(g,F_{q})=L(2s,1)E(g,F_{q})=\pi^{-s}\Gamma(s)\zeta(2s)E(g,f),$$
	 Theorem 3.7.2 of \cite{Bump1} states 
	 $E^{\sharp}(g,F_{q})=E^{\sharp}(g,\widetilde{F_{q}})$, where $E^{\sharp}(g,\widetilde{f})=L(2-2s,1)E(g,\widetilde{f})$ so that 
	 $$\pi^{-s}\Gamma(s)(E^{\sharp}.f_{q})(g,s)=\pi^{-(1-s)}\Gamma(1-s)\frac{\Gamma(s)^{2}(-i)^{q}}{\Gamma(s+q/2)\Gamma(s-q/2)}(E^{\sharp}.f_{q})(g,1-s)$$
	 as desired.
\end{proof}
It should be noted that if $q=0$, this gives the functional equation for the classical Eisenstein Series. 

\section{The Case of Ellipses}
 
\tab We would now like to study the Hlawka Zeta Function of ellipses in the plane. The main idea is that Hlawka Zeta function of ellipses are understood to be either Epstein Zeta functions, $(E.f_{0})(g,s_{1},s_{2})$ for a particular $g$, or $E(z,s)$ for a certain $z$. This gives a functional equation $s\mapsto 1-s$, which can be interpreted in terms of the $\wh{r^{2s}}(4q)$ as combinatorial identities of a sort. 
\subsection*{Finding a Functional Equation}
\tab Consider the following situation: we have an ellipse $e$ with center at the origin, major axis length $a$, minor axis length $b$, and is rotated $\vph$ degrees counterclockwise about the origin. This is the image of the unit circle under the shear transformation $g_{e}=\kappa_{-\vph}\begin{pmatrix}
a & 0\\
0 & b
\end{pmatrix}$. This means the equation of an ellipse is given by $|g_{e}^{-1}\boldsymbol{x}|^{2}=1$. We would like to calculate the Hlawka Zeta function for such an object. Let us first consider 
\begin{definition}
	Let $u \in GL(n,\R)$ be a positive definite symmetric matrix. Then the \underline{Epstein Zeta Function} is 
	$$\mathcal{E}(u,s)=\sum_{ x\in \Z^{n}\ssm\{0\}}(x^{T}ux)^{-s}.$$
\end{definition}
It is a theorem of linear algebra that every positive definite symmetric form $u$ can be written as $g^{T}g$ for some $g\in GL(2,\R)$. Thus the expression in the sum is 
$$(x^{T}g^{T}gx)^{-s}=((gx)^{T}(gx))^{-s}=|gx|^{-2s}$$
for some $g\in GL(2,\R)$. In the notation of Section 1, if $D$ is the ellipse $e$, then the $t(m,n)$'s for the ellipse are $t(m,n)^{2}=|g_{e}^{-1}(m,n)|^{2}$. Summing over all $(m,n)$, we get
\begin{proposition}
	Set $g_{e}$ as before. The Hlawka Zeta function for an ellipse determined by $g_{e}$ is 
	$Z(r,s)=\mathcal{E}(^{T}g_{e}^{-1}g_{e}^{-1},s)$.
\end{proposition}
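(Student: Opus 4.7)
The plan is to unpack each definition and then observe that the sum defining the Hlawka zeta function is already essentially the Epstein sum, so the whole proof is a bookkeeping exercise.

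First I would establish the key identity $t(m,n) = |g_{e}^{-1}(m,n)|$ for every lattice point $(m,n)\in \Z^{2}\ssm\{(0,0)\}$. Because $g_{e}$ is a linear map, dilation by $t>0$ commutes with the action of $g_{e}$, so
\[
tD = t\, g_{e}(B_{1}(0)) = g_{e}(B_{t}(0)),
\]
where $B_{t}(0)$ is the closed Euclidean disk of radius $t$ about the origin. Hence $(m,n)\in tD$ is equivalent to $g_{e}^{-1}(m,n)\in B_{t}(0)$, i.e.\ $|g_{e}^{-1}(m,n)|\leq t$, and taking the minimum over $t$ forces $t(m,n)=|g_{e}^{-1}(m,n)|$.

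Next I would plug this into the definition of $Z_{r}(s)$:
\[
Z_{r}(s)=\sum_{(m,n)\neq (0,0)} t(m,n)^{-2s}=\sum_{(m,n)\neq (0,0)} |g_{e}^{-1}(m,n)|^{-2s}.
\]
Then I rewrite $|g_{e}^{-1}(m,n)|^{2}$ as a quadratic form: writing column vectors,
\[
|g_{e}^{-1}(m,n)|^{2}=(g_{e}^{-1}(m,n))^{T}(g_{e}^{-1}(m,n))=(m,n)^{T}\bigl({}^{T}\!g_{e}^{-1}\,g_{e}^{-1}\bigr)(m,n).
\]
So setting $u={}^{T}\!g_{e}^{-1}\,g_{e}^{-1}$ gives $Z_{r}(s)=\sum_{x\in \Z^{2}\ssm\{0\}}(x^{T}ux)^{-s}=\mathcal{E}(u,s)$, which is exactly the claim.

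The only thing to verify for legitimacy is that $u$ is symmetric and positive definite, which the definition of $\mathcal{E}$ requires. Symmetry is immediate since $u^{T}=(g_{e}^{-1})^{T}((g_{e}^{-1})^{T})^{T}={}^{T}\!g_{e}^{-1}\,g_{e}^{-1}=u$, and positive definiteness follows because $x^{T}ux=|g_{e}^{-1}x|^{2}>0$ for every $x\neq 0$ (using that $g_{e}^{-1}$ is invertible). There is really no substantive obstacle here; the content of the proposition is the observation that counting lattice points inside dilates of the ellipse $g_{e}(B_{1}(0))$ is the same, after the change of variables $x\mapsto g_{e}^{-1}x$, as counting in Euclidean balls weighted by the quadratic form ${}^{T}\!g_{e}^{-1}\,g_{e}^{-1}$.
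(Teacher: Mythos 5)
Your proposal is correct and follows exactly the route the paper takes: the paper states the key identity $t(m,n)^{2}=|g_{e}^{-1}(m,n)|^{2}$ in the discussion preceding the proposition and then dismisses the proof as ``Clear,'' whereas you actually justify that identity (via $tD=g_{e}(B_{t}(0))$) and verify that ${}^{T}g_{e}^{-1}g_{e}^{-1}$ is symmetric and positive definite. Your write-up is simply a fully detailed version of the paper's one-line argument.
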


\begin{proof}
	Clear.
\end{proof}

Now, from \cite{Taylor1}, the Epstein Zeta function has a functional equation 
$$\pi^{-s}\Gamma(s)\mathcal{E}(u,s)=\pi^{-(1-s)}\Gamma(1-s)\det(u)^{-1/2}\mathcal{E}(u^{-1},n/2-s)$$
for $u\in GL(n,\R)$. Then the Hlawka Zeta function also has a functional equation 
\begin{equation}
\pi^{-s}\Gamma(s)Z(r,s)=\pi^{-(1-s)}\Gamma(1-s)(ab)^{-1/2}Z(r^{\ast},1-s)
\end{equation}
Where $r^{\ast}$ is the ellipse determined by $^{T}g_{e}^{-1}$ in the same way as before. What we would now like to do is interpret this functional equation in terms of the formula in terms of Eisenstein Series. 
\subsection*{Interpreting the Functional Equation}

From the previous section, we know that the action of $GL(2,\R)$ on our polar functions $r\in C(\R/2\pi\Z,(0,\infty))$ is essentially determined by the Cartan decomposition of $g$. For $g_{e}$, this decomposition is clearly $g_{e}=b\kappa_{-\vph}\begin{pmatrix}
a/b & 0 \\
0 & 1 \\
\end{pmatrix}$. Thus the action on the constant function, i.e. the unit circle, $r=1$ of $g_{e}$ is 
$$(g_{e}.r)(\theta)=ab\left(\left(\frac{a}{b}\right)^{2}+\left(1-\left(\frac{a}{b}\right)^{2}\right)\cos^{2}(\theta-\vph)\right)^{-1/2}.$$
We would like to find the Fourier coefficients of $(g_{e}.r)(\theta)^{2s}$ for any $s$. Since we know the constant $a$ will simply become $a^{s}$ and the action by $\kappa_{-\vph}$ is known, it suffices to find the Fourier coefficients for 
$$\left(\frac{a}{b}\right)^{2}+\left(1-\left(\frac{a}{b}\right)^{2}\right)\cos^{2}(\theta).$$
First, set $\frac{a^{2}}{b^{2}}=c>0$, and $d=1-c$. Thus we have reduced the problem to finding the Fourier Coefficient of 
$$E(\theta)=(c+d\cos^{2}(\theta))^{s}.$$ 
Notably, this is even, so we only need to know $\int_{0}^{2\pi}E(\theta)\cos(q\theta)d\theta$. We begin by recalling the extended binomial theorem. For $|x|>|y|>0$ and $r\in \C$, we have 
$$(x+y)^{r}=\sum_{k=0}^{\infty}{r\choose k}x^{r-k}y^{k}$$
where ${r\choose k}=\frac{r(r-1)\cdots (r-k+1)}{k!}=\frac{\Gamma(r+k)}{k!\Gamma(r)}$.
In this case, $$\left|\frac{a^{2}}{b^{2}}\right|>\left|\frac{a^{2}}{b^{2}}-1\right|\geq \left|\left(1-\frac{a^{2}}{b^{2}}\right)\cos^{2}(\theta)\right|,$$
since $a>b$. Now, 
$$(c+d\cos^{2}(\theta))^{s}=\sum_{k=0}^{\infty} {s\choose k}c^{s-k}d^{k}\cos^{2k}(\theta).$$
We evaluate 
$$\cos^{2k}(\theta)=(\Re(e^{i\theta}))^{2k}=\frac{1}{2^{2k}}(e^{i\theta}+e^{-i\theta})^{k}=\frac{1}{2^{2k-1}}\sum_{i=0}^{k}{2k\choose i}\cos(2(k-i)\theta).$$
Here we would like to isolate a cosine term with $4q\theta$ for some $q\in \Z$, since when we integrate the rest of the terms vanish. This means $2q=k-i$ so $i=k-2q>0$. Thus the term with $\cos(4q\theta)$ is 
$$\sum_{k>2q} {s \choose k} {2k \choose k-2q} \frac{c^{s-k}d^{k}}{2^{2k-1}}\cos(4q\theta).$$ 
Thus, upon integrating, we obtain that the $4q$-th Fourier coefficient is 
	$$c^{s}\sum_{k>2q} {s \choose k} {2k \choose k-2q} \frac{c^{-k}d^{k}}{2^{2k-1}}.$$
Reverting to $s\mapsto -s$, we obtain the necessary coefficients
\begin{equation}
	\widetilde{(g_{e}.1)^{2s}}(4q)=(ab)^{2s}c^{-s}\sum_{k>2q} {-s \choose k} {2k \choose k-2q} \frac{c^{-k}d^{k}}{2^{2k-1}}=c^{-s}\sum_{j=0}^{\infty}{2j+4q \choose j}\frac{d^{j+2q}\Gamma(j+2q-s)}{2^{2q-1}c^{j+2q}\Gamma(-s)}(1/2)^{j}.
\end{equation}
in the expression
$$	Z((g_{e}.1),s)=\wt{(g_{e}.1)^{2s}}(0)(E^{\sharp}.f_{0})(\kappa_{-\vph},s)+\frac{1}{2}\sum_{q=0}^{\infty}\wt{(g_{e}.1)^{2s}}(4q)((E^{\sharp}.f_{4q})(\kappa_{-\vph},s)+(E^{\sharp}.f_{-4q})(\kappa_{-\vph},s))$$
Now, we want to relate coefficients in the Eisenstein Series, so that by multiplying $Z(r,s)$ by $\pi^{-s}\Gamma(s)$, we evoke the functional equation just derived for the $E^{\sharp}.f_{q}$'s in each of the coefficients. We have that that $^{T}g_{e}^{-1}=\kappa_{-\vph}\begin{pmatrix}
1/a & 0 \\
0 & 1/b
\end{pmatrix}$ will yield the same ellipse as $\kappa_{-\vph+\pi/2}\begin{pmatrix}
1/b & 0 \\
0 & 1/a \\
\end{pmatrix}$. This means that $c=a/b=(1/b)/(1/a)$ stays the same along with $d$. Now, the only other difference is in the $\kappa_{-\vph+\pi/2}$, but by our lemma from the previous section, $(E^{\sharp}.f_{4q})(\kappa_{-\vph+\pi/2},s)=(E^{\sharp}.f_{4q})(\kappa_{-\vph},s)$, so this works out. Thus, the only change in variables is $s\mapsto 1-s$ when we multiply the above formula for the Hlawka Zeta Function by $\pi^{-s}\Gamma(s)$. Thus we arrive at the following theorem. 
\begin{theorem}
	For all $q>0$, we have that 
	$$
\pi^{-s}\Gamma(s)(ab)^{2s}c^{-s}\sum_{j=0}^{\infty}{2j+4q \choose j}\frac{d^{j+2q}\Gamma(j+2q-s)}{2^{2q-1}c^{j+2q}\Gamma(-s)}(1/2)^{j}$$
\begin{equation}
=\pi^{-(1-s)}\Gamma(1-s)\frac{\Gamma(s)^{2}}{\Gamma(s+2q)\Gamma(s-2q)}(ab)^{2s-3/2}c^{-(1-s)}\sum_{j=0}^{\infty}{2j+4q \choose j}\frac{d^{j+2q}\Gamma(j+2q-1+s)}{2^{2q-1}c^{j+2q}\Gamma(-1+s)}(1/2)^{j}.
	\end{equation}
\end{theorem}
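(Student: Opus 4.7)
The approach is to derive (10) as a term-by-term consequence of combining three results already established in the preceding sections: equation (8), the Epstein--zeta functional equation relating $Z(r,s)$ and $Z(r^{\ast},1-s)$; Corollary 22, the Eisenstein series expansion of $Z_{r}(s)$ for an even $r$; and Theorem 31, the functional equation $s\mapsto 1-s$ for each individual Eisenstein series $(E^{\sharp}.f_{4q})(g,s)$.

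First I would expand both sides of (8) using Corollary 22. The left side becomes a series $\sum_{q}\wt{r^{2s}}(4q)(E^{\sharp}.f_{4q})(\kappa_{-\vph},s)$ whose Fourier coefficients $\wt{r^{2s}}(4q)$ are given explicitly by (9). For the right side, I apply Corollary 22 to $r^{\ast}={}^{T}g_{e}^{-1}\cdot 1$; as noted in the paragraph preceding the theorem, $^{T}g_{e}^{-1}$ yields the same ellipse as $\kappa_{-\vph+\pi/2}\diag(1/b,1/a)$, so $r^{\ast}$ has the same $c,d$ parameters as $r$ while the scale "$ab$" is replaced by $1/(ab)$ and the rotation shifts from $\kappa_{-\vph}$ to $\kappa_{-\vph+\pi/2}$. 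Because $e^{i\cdot 4q\cdot \pi/2}=1$, Lemma 20 gives $(E^{\sharp}.f_{4q})(\kappa_{-\vph+\pi/2},1-s)=(E^{\sharp}.f_{4q})(\kappa_{-\vph},1-s)$, so both sides become sums indexed by $q$ of scalar coefficients times the Eisenstein series $(E^{\sharp}.f_{4q})(\kappa_{-\vph},\,\cdot\,)$.

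Second, I apply Theorem 31 to each summand $(E^{\sharp}.f_{4q})(\kappa_{-\vph},s)$ on the left, converting the spectral parameter from $s$ to $1-s$. Since $(-i)^{4q}=1$, the Gamma factor simplifies to $\Gamma(s)^{2}/[\Gamma(s+2q)\Gamma(s-2q)]$. Both sides of $\pi^{-s}\Gamma(s)Z(r,s)=\pi^{-(1-s)}\Gamma(1-s)(ab)^{-1/2}Z(r^{\ast},1-s)$ are then linear combinations of the family $\{(E^{\sharp}.f_{4q})(\kappa_{-\vph},1-s)\}_{q\in\Z}$, which by Lemma 20 is linearly independent as a family of functions of $\vph$, because each member transforms under right translation by $SO(2,\R)$ through a distinct character $e^{i\cdot 4q\cdot\vph}$. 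Extracting the coefficient of $(E^{\sharp}.f_{4q})(\kappa_{-\vph},1-s)$ on both sides, and substituting formula (9) for $\wt{r^{2s}}(4q)$ on the left and the $r^{\ast}$-analogue (with the replacements $ab\mapsto 1/(ab)$ and $s\mapsto 1-s$ inside (9)) on the right, produces identity (10) after collecting the resulting $(ab)$-powers.

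The central obstacle is justifying the coefficient-extraction step rigorously. Although the Eisenstein series are linearly independent as functions of $\vph$, one must operate in the half-plane $\Re(s)>1$ where the expansion of Corollary 22 and all the binomial series of (9) converge absolutely, then invoke analytic continuation to propagate the coefficient identity to the domain where both sides of (10) are meromorphic. A secondary but delicate bookkeeping issue is correctly tracking how the exponent $(ab)^{2s-3/2}$ on the right of (10) is assembled from the $(ab)^{-1/2}$ in (8), the $(1/(ab))^{2(1-s)}$ coming from applying (9) to $r^{\ast}$, and the normalization chosen for the left-hand side of (10); this is what ultimately guarantees that the two sides of (10) match precisely coefficient by coefficient in the variable $q$.
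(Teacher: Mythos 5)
Your proposal follows the paper's own derivation: expand both sides of the Epstein functional equation (8) in the Eisenstein-series basis with the explicit coefficients (9), apply the termwise functional equation of Theorem 31 (noting $(-i)^{4q}=1$ and that $\kappa_{\pi/2}$ leaves $(E^{\sharp}.f_{4q})$ invariant), and equate the coefficients of each $(E^{\sharp}.f_{4q})(\kappa_{-\varphi},1-s)$. You are in fact more explicit than the paper on the one genuinely delicate point --- justifying the coefficient extraction via the distinct characters $e^{4qi\varphi}$ under rotation together with analytic continuation from $\Re(s)>1$ --- which the text passes over with ``thus we arrive at the following theorem.''
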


This is a combinatorial identity in terms of two seemingly unrelated power series. In fact, both of the sums are related in that they may both be interpreted as hypergeometric functions, i.e. functions of the form
$$_{2}F_{1}(a,b;c;z)=\sum_{n=0} \frac{(a)_{n}(b)_{n}}{(c)_{n}}\frac{z^{n}}{n!}$$
defined for $z\in \C$ with $|z|<1$, and the notation $(a)_{n}$ means $a(a+1)\cdots (a+n-1)$ if $n>0$ and $1$ if $n=0$ (This is the so called rising Pochhammer Symbol).  
 
\section{The Case of the Square}

It is might be reasonable to ask where the Hlawka Zeta function has a functional equation for general $r\in C(\R/2\pi\Z,(0,\infty))$. For example, let 
\begin{equation}
Z^{\ast}_{r}(s)=\pi^{-s}\Gamma(s)Z_{r}(s).\end{equation}
Then, if we follow the case of ellipses, $Z_{r}^{\ast}(s)$ may meromorphic continuation to all of $\C$ and satisfies the functional equation. 
$$Z_{r}^{\ast}(s)=Z_{r}^{\ast}(1-s).$$
This informal conjecture, motivated by the example of the square and other Modular forms, breaks down in the case of the square. One can fairly simply compute the Hlawka Zeta function for the square centered at the origin with side length 2. Here, one may see that $t_{k}=k$, and $a_{k}=8k$. Thus the Hlawka Zeta function is 
$$Z_{r}(s)=\sum_{k=1}^{\infty}\frac{8k}{k^{2s}}=8\zeta(2s-1).$$
Now, from the classical functional equation of the zeta function, $Z_{r}(s)$ has a functional equation $s\mapsto 3/2-s$. A further question might ask whether in some sense $Z_{r}(s)$ has a functional equation $s\mapsto 1-s$ in a more general setting.  \\
\tab It is natural to consider a larger classes of functional equation. We will say that a complex function $f$ defined in a domain $D=\{s|\Re(s)>c,c\in \R\}$ has a \underline{regular functional equation} for $s\mapsto 1-s$ if $f$ has meromorphic continuation to the entire plane which satisfies 
$$f(s)=\frac{A^{1-s}\prod_{i=1}^{k} \Gamma(\alpha_{i}(1-s)+\mu_{i})}{B^{s}\prod_{j=1}^{K} \Gamma(\beta_{j}s+\om_{j})}f(1-s)$$
with $\alpha_{i},\beta_{j}$ real and positive, and $A,B,\mu_{i},\om_{j}$ complex. This is a bit of a generalization of the functional equation condition of the "Selberg Class" discussed in the original paper by Selberg in \cite{Selberg1}. \\
\tab In our case, as for $L$ functions and other modular forms, it seems reasonable to assume that $Z_{r}(1-s)$ should be replaced by $Z_{r^{\ast}}(1-s)$, where $r^{\ast}$ is another continuous function. Now, we have a well posed question: If $r:\R/2\pi\Z\to \R_{>0}$ traces the square, is there a continuous function $r^{\ast}:\R/2\pi\Z\to \R_{>0}$ such that
\begin{equation}
	Z_{r}(s)=\frac{A^{1-s}\prod_{i=1}^{k} \Gamma(\alpha_{i}(1-s)+\mu_{i})}{B^{s}\prod_{j=1}^{K} \Gamma(\beta_{j}s+\om_{j})}Z_{r^{\ast}}(1-s)
\end{equation}
for some $\alpha_{i},\beta_{j}\in \R_{>0}$, $A,B,\mu_{i},\om_{j}\in \C$, and for all $s\in \C$? We will give a negative answer to this question. \\
\tab In our other question about the fibers of the mapping $r\mapsto Z_{r}(s)$, we have at least one conjecture, mentioned in a previous section. However, this conjecture only applies to the $GL(2,\R)$-orbits of an element $r\in C(\R/2\pi\Z,(0,\infty))$. There is a broader conjecture looming here, namely whether $Z_{r}(s)=Z_{r'}(s)$ implies that $r'$ is in the $GL(2,\R)$-orbit of $r$. The square provides a counterexample to this conjecture. 

\subsection*{Functional Equation}
\tab The goal of this section is to prove the following.
\begin{theorem}
If $r$ traces out the unit square, there is no continuous $r^{\ast}$ such that (12) holds for some $\alpha_{i},\beta_{j}\in \R_{>0}$, $A,B,\mu_{i},\om_{j}\in \C$, and for all $s\in \C$. 
\end{theorem}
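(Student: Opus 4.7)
My plan is to derive a contradiction by comparing pole structures. Assume for contradiction that a continuous $r^{*}:\mathbb{R}/2\pi\mathbb{Z}\to\mathbb{R}_{>0}$ and parameters $\alpha_{i},\beta_{j}\in\mathbb{R}_{>0}$, $A,B,\mu_{i},\omega_{j}\in\mathbb{C}$ satisfy~(12). Since $Z_{r}(s)=8\zeta(2s-1)$, substituting $w=1-s$ into~(12) and rearranging yields the meromorphic identity on $\mathbb{C}$
\[
 Z_{r^{*}}(w)\;=\;8\,C(w)\,\zeta(1-2w),\qquad C(w)\;:=\;\frac{B^{1-w}\prod_{j}\Gamma(\beta_{j}(1-w)+\omega_{j})}{A^{w}\prod_{i}\Gamma(\alpha_{i}w+\mu_{i})}.
\]
First I would establish that $Z_{r^{*}}(w)$ has a simple pole at $w=1$ with positive residue equal to $\tfrac{1}{2}\int_{0}^{2\pi}r^{*}(\theta)^{2}d\theta=\operatorname{area}(D^{*})$. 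This can be extracted by splitting $Z_{r^{*}}(w)$ into the explicit polar integral $\int_{|x|\ge 1}r^{*}(\theta(x))^{2w}|x|^{-2w}dx$, which carries the stated simple pole, plus a remainder that is analytic near $w=1$; the conclusion is also compatible with the Point-Count Corollary of Section~2.

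Since $\zeta(-1)=-\tfrac{1}{12}\ne 0$, this pole must be produced by $C(w)$ itself. The exponentials $A^{w},B^{1-w}$ are nowhere-vanishing entire, and the only source of poles in $C(w)$ is the numerator $Q(1-w):=\prod_{j}\Gamma(\beta_{j}(1-w)+\omega_{j})$, so some factor $\Gamma(\beta(1-w)+\omega)$ must have $\omega=-k$ for a nonnegative integer $k$, producing the factor $\Gamma(\beta(1-w)-k)$. This single factor then drags along the infinite sequence of poles $w_{\ell}:=1+\ell/\beta$ for $\ell=1,2,3,\dots$, all lying in $\Re(w)>1$. In that half-plane $Z_{r^{*}}$ is analytic (Lemma~6) and $\zeta(1-2w)$ has simple zeros only at the trivial half-odd-integers $\{3/2,5/2,7/2,\dots\}$; therefore each $w_{\ell}$ must either coincide with such a trivial zero or be cancelled by a zero of $C(w)$, i.e.\ by a pole of $P(1-w):=\prod_{i}\Gamma(\alpha_{i}w+\mu_{i})$. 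But each factor $\Gamma(\alpha_{i}w+\mu_{i})$ has poles at $w=-(\mu_{i}+k)/\alpha_{i}$ tending to $-\infty$, so contributes only finitely many poles to $\Re(w)>1$, and the finite product $P$ offers only a finite set of cancellation points.

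The combinatorial heart of the argument is to verify that for any $\beta>0$, infinitely many $w_{\ell}$ fall outside $\{3/2,5/2,\dots\}$. If $w_{1}=1+1/\beta$ itself equals $n+\tfrac{1}{2}$ for some $n\ge 1$, then $\beta=2/(2n-1)$ and $w_{2}=1+2/\beta=2n$ is a positive integer, not a half-odd-integer; more generally, writing $1/\beta=p/q$ in lowest terms and parity-analyzing the Diophantine equation $2\ell p=q(2m-1)$ shows that the $\ell$'s yielding $w_{\ell}\in\{3/2,5/2,\dots\}$ form at most a proper arithmetic sub-progression of $\mathbb{Z}_{\ge 1}$. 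Consequently infinitely many $w_{\ell}$ are genuine extraneous poles of $C(w)\zeta(1-2w)=Z_{r^{*}}(w)$ in $\Re(w)>1$, too many to be absorbed by the finite pool of $P$-poles, contradicting the analyticity of $Z_{r^{*}}$ there.

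I expect the main obstacle to be the preliminary pole identification for merely continuous (not necessarily H\"{o}lder) $r^{*}$: the Section~3 Eisenstein-series expansion required $\alpha$-H\"{o}lder continuity with $\alpha>\tfrac{1}{2}$, so one must either uniformly approximate $r^{*}$ by H\"{o}lder functions and pass to the limit, or invoke the direct polar-integral/Tauberian computation suggested above. Once that is in hand, the rest of the argument is the pure meromorphic bookkeeping described.
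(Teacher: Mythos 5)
Your argument is correct and is essentially the paper's own proof: both hinge on (i) the pole of $Z_{r^{\ast}}$ at $1$ forcing some $\Gamma(\beta_{j}(1-s)+\omega_{j})$ to contribute an arithmetic progression of poles $1+\ell/\beta_{j}$ in the half-plane of convergence, and (ii) the observation that if such a progression meets one half-odd-integer (the only available cancellation points there, coming from the trivial zeros of $\zeta(1-2w)$, equivalently the poles of $\Gamma(1/2-s)$ in the paper's rearranged identity) then it meets infinitely many integers, which cannot all be absorbed by the finitely many remaining cancellation points --- this is exactly the paper's Lemma 41. The one simplification worth noting is that the pole at $w=1$ is immediate from Landau's theorem applied to the positive-coefficient generalized Dirichlet series $Z_{r^{\ast}}$ (the paper's Proposition 40), so the polar-integral/H\"{o}lder-approximation step you flag as the main obstacle can be bypassed entirely.
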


We will first need a couple of lemmas here, which was only proven partially in the first section. However, there is a difference in notation between ours and the more standard notation. Let $\{a_{n}\}$, $\{\lambda_{n}\}$ be two sequences of real numbers, with $\lambda_{n}$ increasing. Then a \underline{Generalized Dirichlet Series} is of the form 
$$f(s)=\sum_{n=1}^{\infty}a_{n}e^{-\lambda_{n}s}.$$
This is an equivalent definition to our $t$-Dirichlet Series given by 
$$(D_{t}.a)(s)=\sum_{n=1}^{\infty}a_{n}t_{n}^{-s}=\sum_{n=1}^{\infty}a_{n}e^{-\log(t_{n})s}$$
and there is a change of variables $\lambda_{n}=\log(t_{n})$. Note that in the case of $Z_{r}(s)$, the change of variables is actually $\lambda_{k}=2\log(t_{k})$, since the $t_{k}$ is squared there. Given this, we prove the lemma. 

\begin{lemma}
	Let $r\in C(\R/2\pi\Z,(0,\infty))$. Then $Z_{r}(s)$ converges in the half plane $\Re(s)>1$.  
\end{lemma}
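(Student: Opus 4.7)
The plan is to read $Z_r(s)$ as a generalized Dirichlet series in the normalization just set up, and then deduce convergence from the growth of its partial sums. Using Corollary 4, write
$$Z_r(s)=\sum_{k=1}^{\infty}a_k t_k^{-2s}=\sum_{k=1}^{\infty}a_k e^{-\lambda_k s},\qquad \lambda_k:=2\log(t_k),$$
where $\{\lambda_k\}$ is strictly increasing and, since $r$ is bounded below (being continuous and positive on a compact domain), satisfies $\lambda_k\to\infty$. I will then invoke the classical convergence criterion for generalized Dirichlet series: if the partial sums $A(N)=\sum_{k=1}^{N}a_k$ satisfy $A(N)=O(e^{\alpha\lambda_N})$, then $\sum a_k e^{-\lambda_k s}$ converges for every $s$ with $\Re(s)>\alpha$. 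This criterion is a routine Abel-summation/summation-by-parts argument, leveraging $\lambda_N\to\infty$ to discard the boundary term.

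The step with actual content is the bound $A(N)=O(t_N^2)$, which will translate to $A(N)=O(e^{\lambda_N})$ and yield the abscissa bound $\alpha\le 1$. By construction $A(N)$ is precisely the number of nonzero lattice points contained in $t_N D$. Because $r$ is continuous on the compact circle, $M:=\max_\theta r(\theta)<\infty$, so $t_N D\subseteq \overline{B_{Mt_N}(0)}$. A standard Gauss-type lattice point count (comparing each lattice point to the unit square it anchors) gives
$$A(N)\;\le\;\#\{(m,n)\in\Z^2\ssm\{(0,0)\}:m^2+n^2\le M^2 t_N^2\}\;\le\;C\,t_N^2$$
for some constant $C$ depending only on $M$. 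Thus $\log A(N)\le \lambda_N+O(1)$, so the abscissa of convergence satisfies $\alpha\le 1$, proving the lemma.

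The hardest part is genuinely conceptual rather than technical: one must be careful that the generalized Dirichlet series framework applies, i.e. that $\{\lambda_k\}$ is a bona fide strictly increasing sequence tending to infinity. The strict monotonicity is exactly the content of Corollary 4 (relying on Proposition 2), and divergence to infinity follows from the bound $t_k\ge |(m,n)|/M$ applied to the $k$-th lattice point in the ordering, forcing $t_k\to\infty$. Everything else is a routine application of Abel summation; in particular, since we already have absolute convergence for $\Re(s)>1$ from Lemma 5, the current lemma can alternatively be obtained by the trivial observation that absolute convergence implies convergence, but the generalized-Dirichlet-series route is the one suggested by the preceding discussion and is what will be reused when refining the abscissa in later sections.
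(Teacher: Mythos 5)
Your proof is correct and follows essentially the same route as the paper: both read $Z_r(s)$ as a generalized Dirichlet series with $\lambda_k=2\log t_k$ and bound the lattice-point count $A(N)$ by $O(t_N^{2})$ to control the abscissa of convergence via the Hardy--Riesz formula (the paper uses a bounding square where you use a disk, an immaterial difference). The only substantive gap between the two is that the paper also proves the matching lower bound $A(N)\geq c\,t_N^{2}$ so that the abscissa equals $1$ exactly --- not needed for the lemma as literally stated (which, as you note, already follows from the earlier absolute-convergence lemma), but it is precisely what the subsequent proposition on the pole at $s=1$ requires, since Landau's theorem places the singularity at the abscissa of convergence.
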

\tab Note throughout this proof, we use the notation of the first section. 
\begin{proof}
	From \cite{Hardy1}, $Z_{r}(s)$ converges for $\Re(s)>\gm$, where 
	$$\limsup\frac{\log(A(k))}{2\log(t_{k})}=\gm,$$
	where $A(k)=\sum_{n=1}^{k}a_{k}$. As noted before, $A_{k}$ is in fact the number of lattice points in the region $t_{k}D$. Since $r$ is continuous, there is a dilation of the unit square $Q_{1}$ with side length $M\in \Z$ which bounds the region $D$. Furthermore, there is a square $Q_{2}$ entirely contained in $D$ with side length $m\in \Z$. The number of lattice points in $t_{k}D$ is bounded above by $\lceil t_{k}Q_{1} \rceil$ and bounded below by $\lfloor t_{k}Q_{2} \lfloor $, and the number of lattice points in these are $(t_{k}M)^{2}$ and $(t_{k}m)^{2}$, respectively. Now,
	$$\lim\limits_{k\to\infty}\frac{2\log( \lfloor t_{k}m \rfloor )}{2\log(t_{k})}=1\leq 
	\lim\limits_{k\to \infty}{\log(A(k))}{2\log(t_{k})} \leq \lim\limits_{k\to\infty}\frac{2\log( \lceil t_{k}M \rceil )}{2\log(t_{k})}=1$$
	since $t_{k}$ increases to infinity as $k\to \infty$. Therefore $\gm=1$. 
\end{proof}

This immediately gives the following proposition

\begin{proposition}
	Let $r\in C(\R/2\pi\Z,(0,\infty))$. Then $Z_{r}(s)$ has a pole at $s=1$. 
\end{proposition}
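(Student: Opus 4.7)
The plan is to combine the preceding lemma (which identifies the abscissa of convergence of $Z_r$ as exactly $s=1$) with Landau's theorem for Dirichlet series with non-negative coefficients, and to sandwich $Z_r$ between two shifted copies of the Epstein zeta function of the Gaussian integers in order to read off the exact behavior at $s=1$.

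First, observe that $Z_r(s)=\sum_{k=1}^{\infty}a_{k}t_{k}^{-2s}$ is a generalized Dirichlet series with non-negative real coefficients, since the $a_{k}$ are lattice-point counts. The preceding lemma pins the abscissa of convergence at $s=1$. By Landau's theorem for generalized Dirichlet series with non-negative coefficients (Hardy--Riesz, cf.\ \cite{Hardy1}), the function cannot be analytically continued across $s=1$ along the real axis, so $s=1$ is a genuine singularity of $Z_r$.

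Second, I would quantify the singularity by a direct comparison. Continuity of $r$ on the compact $\R/2\pi\Z$ gives constants $0<m\leq r(\theta)\leq M$. Using the formula $Z_r(s)=\sum_{(m,n)\neq(0,0)} r(\theta(m,n))^{2s}(m^{2}+n^{2})^{-s}$ and restricting to real $s>1$, we obtain
$$m^{2s}E(s)\;\leq\; Z_r(s)\;\leq\; M^{2s}E(s),$$
where $E(s)=\sum_{(m,n)\neq(0,0)}(m^{2}+n^{2})^{-s}$ is the Epstein zeta function of $\Z[i]$. Using the standard factorization $E(s)=4\zeta(s)L(s,\chi_{-4})$ and $L(1,\chi_{-4})=\pi/4$, $E$ has a simple pole at $s=1$ with residue $\pi$. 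Multiplying through by $(s-1)$ and letting $s\to 1^{+}$ traps
$$\pi m^{2}\;\leq\;\liminf_{s\to 1^{+}}(s-1)Z_r(s)\;\leq\;\limsup_{s\to 1^{+}}(s-1)Z_r(s)\;\leq\;\pi M^{2}.$$
Thus $Z_r$ blows up at the simple-pole rate $(s-1)^{-1}$ along the real approach to $1$, which combined with the Landau singularity identifies $s=1$ as a first-order pole.

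The main obstacle is upgrading this real-axis blow-up to a pole in the full complex-analytic sense, since the latter in principle demands meromorphic continuation of $Z_r$ to a neighborhood of $s=1$. Under the H\"older hypothesis of Theorem 12 this is clean: the Eisenstein series $(E^{\sharp}.f_{4q})(I,s)$ for $q\neq 0$ are holomorphic at $s=1$, so the singularity of $Z_r$ comes solely from the $q=0$ summand $\wh{r^{2s}}(0)\cdot(E^{\sharp}.f_{0})(I,s)$, which inherits the simple pole of the classical Epstein zeta with residue $\pi\wh{r^{2}}(0)$. For merely continuous $r$ the Eisenstein-series representation is not available and one falls back on the Landau--Epstein combination above as the substantive content of ``pole at $s=1$'' in the statement of the proposition.
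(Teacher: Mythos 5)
Your first paragraph is exactly the paper's proof: the preceding lemma puts the abscissa of convergence at $\sigma=1$, and Landau's theorem for generalized Dirichlet series with non-negative coefficients forces a singularity there. Everything after that is a genuine refinement that the paper does not attempt, and it addresses a real imprecision in the paper's own argument: Landau gives only a \emph{singularity} at $s=1$, whereas the proposition asserts a \emph{pole}, and those are not the same thing without some form of meromorphic continuation near $s=1$. Your sandwich $m^{2s}E(s)\leq Z_r(s)\leq M^{2s}E(s)$ against the Epstein zeta function of $\Z[i]$ (with residue $\pi$ at $s=1$, via $E(s)=4\zeta(s)L(s,\chi_{-4})$ and $L(1,\chi_{-4})=\pi/4$) correctly traps $(s-1)Z_r(s)$ between $\pi m^{2}$ and $\pi M^{2}$ as $s\to 1^{+}$, which pins down the first-order blow-up rate along the real axis; and your observation that under the H\"older hypothesis the pole is carried entirely by the $q=0$ term $\wh{r^{2s}}(0)(E^{\sharp}.f_{0})(I,s)$, with residue $\pi\wh{r^{2}}(0)$, is the clean way to get an honest simple pole when the Eisenstein series formula applies. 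You are also right to flag, rather than paper over, that for merely continuous $r$ the statement ``pole at $s=1$'' is really only justified in the weakened sense of a Landau singularity with simple-pole growth rate. In short: same core lemma as the paper, but your version both quantifies the singularity and is more honest about what Landau's theorem does and does not deliver.
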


\begin{proof}
	This is clear from Landau's theorem, which states that if a generalized Dirichlet Series has positive coefficients, then there is a singularity at the real point on the abcissa of convergence. Again, see \cite{Hardy1}.  
\end{proof}

We need another small lemma relating to the poles of the gamma factors which will be of use later. 

\begin{lemma}
	Let $S=\{s|s=\frac{k-k_{0}}{\alpha_{i}}+1,k\geq 0\}$ for some $k_{0}\in \Z_{\geq 0}$, $\beta_{i}\in\R_{>0}$. If $\frac{1}{2}+\ell\in S$ for some $\ell\in\Z$, then $S$ contains infinitely many integers in the domain $\Re(s)>\frac{1}{2}+\ell$. 
\end{lemma}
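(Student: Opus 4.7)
The plan is to exploit the hypothesis to extract rationality and a parity condition on the step size of the arithmetic progression $S$, and then invoke elementary divisibility to pick out infinitely many admissible integers.

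First, I would rewrite $S$ as the arithmetic progression $S = \{a + k\delta : k \geq 0\}$ with $a = 1 - k_0/\alpha_i$ and $\delta = 1/\alpha_i > 0$. The hypothesis $a + k_1 \delta = \tfrac{1}{2}+\ell$ for some $k_1 \geq 0$, combined with the $k = k_0$ term $a + k_0\delta = 1$, yields $(k_1 - k_0)\delta = \ell - \tfrac{1}{2}$. In particular $k_1 \neq k_0$ (since $\ell$ is an integer), and $\delta = (2\ell - 1)/(2(k_1 - k_0)) \in \Q$, so $\alpha_i$ is rational as soon as $S$ contains any half-integer.

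Next, I would write $\delta = p/q$ in lowest terms with $p, q > 0$. The key parity observation is that $d := \gcd(2\ell - 1,\, 2(k_1 - k_0))$ divides the odd integer $2\ell - 1$, so $d$ is odd; hence $p = |2\ell - 1|/d$ is odd and $q = 2|k_1 - k_0|/d$ is even. Write $q = 2q'$. The signs of $k_1 - k_0$ and of $2\ell - 1$ match (both determined by whether $\ell \geq 1$ or $\ell \leq 0$), so no sign issues appear after passing to absolute values.

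Finally, I would examine the generic element $s = \tfrac{1}{2}+\ell + (k-k_1)p/q$ and ask when it is an integer with $s > \tfrac{1}{2}+\ell$. The integrality condition rearranges to $2(k-k_1)p = 2q'(2N+1)$, i.e. $(k-k_1)p$ is $q'$ times an odd integer. Since $\gcd(p,q')=1$, this is equivalent to $k - k_1 = q' j$ with $jp$ odd; because $p$ is odd, this is further equivalent to $j$ odd. The inequality $s > \tfrac{1}{2}+\ell$ becomes $j > 0$, and $k = k_1 + q' j \in \Z_{\geq 0}$ is automatic. Taking $j = 1, 3, 5, \ldots$ produces the required infinite family.

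The principal obstacle is the parity step: deducing that the lowest-terms denominator of $1/\alpha_i$ is even, which rests entirely on the fact that $2\ell - 1$ is odd for every integer $\ell$. Everything downstream is routine bookkeeping with congruences modulo $q'$.
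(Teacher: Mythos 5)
Your argument is correct and is essentially the paper's: both view $S$ as an arithmetic progression passing through $1$ and the half-integer $\tfrac12+\ell$, deduce that the common difference is rational with odd numerator and even denominator in lowest terms, and then step through the progression to land on integers infinitely often. If anything, your parametrization $k=k_1+q'j$ with $j$ odd and positive is \emph{more} careful than the paper's, which asks for $\frac{k-k_0}{k_1-k_0}$ to be an even \emph{positive} integer and thereby quietly assumes $k_1>k_0$ (equivalently $\ell\ge 1$); when $\ell\le 0$ one needs that ratio to be even and nonpositive to get $s>\tfrac12+\ell$ with $k\ge 0$, a sign issue your version handles automatically.
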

\begin{proof}
	Since $\frac{1}{2}+\ell\in S$, we may conclude that for some integer $k_{1}\in \Z_{\geq 0}$, $\beta_{i}=\frac{k_{1}-k_{0}}{\frac{1}{2}+\ell}$. Then the elements of $S$ are of the form $s=\frac{k-k_{0}}{k_{1}-k_{0}}(\ell+\frac{1}{2})+1$. Then $s$ is an integer greater than $\frac{1}{2}+\ell$ as long as $\frac{k-k_{0}}{k_{1}-k_{0}}$ is an even positive integer. As $k$ ranges through positive integers, this happens infinitely many times.
\end{proof}

We are now in position to prove the theorem.

\begin{proof}
	\tab Suppose there is $\alpha_{i},\beta_{j},\mu_{i},\om_{j},A,B$ as in the definition as above:
	$$8\zeta(2s-1)=\frac{A^{1-s}\prod \Gamma(\alpha_{i}(1-s)+\mu_{i})}{B^{s}\prod \Gamma(\beta_{j}s+\om_{j})}Z(r^{\ast},1-s)$$
	for some continuous $r^{\ast}$.
	Now, this should be the same as for $s\mapsto 1-s$ so that 
	$$8\zeta(1-2s)=\frac{A^{s}\prod \Gamma(\alpha_{i}s+\mu_{i})}{B^{1-s}\prod \Gamma(\beta_{j}(1-s))+\om_{j})}Z(r^{\ast},s).$$
	Multiplying both sides by $\pi^{s-1/2}\Gamma(1/2-s)$, we obtain
	$$8\pi^{s-1/2}\Gamma(1/2-s)\zeta(1-2s)=8\pi^{-s}\Gamma(s)\zeta(2s)=\frac{\pi^{s-1/2}A^{s}\Gamma(1/2-s)\prod \Gamma(\alpha_{i}s+\mu_{i})}{B^{1-s}\prod \Gamma(\beta_{j}(1-s)+\om_{j})}Z(r^{\ast},s)$$
	Which means, if we multiply through, that 
	$$8\pi^{-s}B^{1-s}\Gamma(s)\prod \Gamma(\beta_{j}(1-s)+\om_{j})\zeta(2s)=\pi^{s-1/2}A^{s}\Gamma(1/2-s)\prod \Gamma(\alpha_{i}s+\mu_{i})Z(r^{\ast},s)$$
	From here, we analyze the poles and holomorphicity of the two sides of this equation in the region $\Re(s)\geq 1$.\\
	\tab Let $P_{L}$ and $P_{R}$ be the set of poles of the left and right hand sides in $D=\{s|\Re(s)\geq 1\}$. These two sets are assumed equal. The poles of the left hand side are, explicitly,
	$$P_{L}=D\cap\left( \bigcup \left\{s|s=\frac{k+\om_{j}}{\beta_{j}}+1,k\in \Z_{\geq 0}\right\}\right)=D\cap \left(\bigcup S_{j}\right)$$
	where $S_{j}$ is the set of poles of $\Gamma(\beta_{j}(1-s)+\om_{j})$. The set $P_{R}$ is more delicate.
	The $\Gamma(\frac{1}{2}-s)$ contributes poles at $\{s|s=k+\frac{1}{2},k\in \Z_{\geq 0}\}$. In this region, the product of gamma factors $\Gamma(\alpha_{i}s+\mu_{i})$ contributes only finitely many poles and no zeros. From \cite{Hardy1}, we know that $Z(r^{\ast},s)$ has finitely many zeros in this region, and the lemma states that $Z(r^{\ast},s)$ has a pole at $s=1$. Since the LHS has no zeros here, we conclude that the only zeros of $Z(r^{\ast},s)$ may only cancel out the poles of the other factors on the RHS. Thus, 
	$$P_{R}=D\cap \left(\{1\}\cup \{s|s=\frac{1}{2}+k,k\in\Z,k\geq N\geq 0 \} \right)\cup F,$$
	where $F$ is the finite set of poles coming from the gamma factors and not canceled by $Z(r^{\ast},s)$, and $N$ depends on the zeros of $Z(r^{\ast},s)$. Note that for some $c$, $\{s|\Re(s)>c\}\cap P_{R}$ contains infinitely many half integers. Furthermore, $P_{R}$ contains only finitely many integers.  \\
	\tab Now, since $P_{L}=P_{R}$, $1\in P_{L}$. This means that one of the $S_{j}$, which we will assume without loss of generality is $S_{1}$, is the set 
	$$S_{1}=\left\{s|s=\frac{k-k_{0}}{\beta_{1}}+1,k\in \Z_{\geq 0}    \right\}$$
	for some $k_{0}\in \Z_{\geq 0}$. The set $S_{1}\subset P_{R}$ as well. Now, since $S_{1}$ contains elements of arbitrarily large real part, $S_{1}$ must also contain a half integer. By the lemma, then $S_{1}$ must contain infinitely many integers as well. This contradicts the fact that $P_{R}$ contains only finitely many integers. 
\end{proof}

Notably, this proof relied on the pole of $Z_{r}(s)$ at $s=1$, and this relies heavily on analytic properties of $r$. The pole there made the set of poles "uneven" in a sense. 

\subsection*{An Odd Shape}

We describe a shape which has the same Hlawka Zeta Function as the square. Its boundary is the union of the following lines: 
$$L_{1}=\{(x,y)\in \R^{2} | -1\leq x \leq 1, y=-1\},$$
$$L_{2}=\{(x,y)\in \R^{2} | -1\leq y \leq 1, x=-1\},$$
$$L_{3}=\{(x,y)\in \R^{2}|-1\leq x \leq 0, y=\frac{1}{2}-\frac{1}{2}x\},$$
$$L_{4}=\{(x,y)\in \R^{2}|0\leq x \leq 1, y=\frac{1}{2}+\frac{1}{2}x\},$$
$$L_{5}=\{(x,y)\in \R^{2} | 1\leq x \leq 2, y=1\},$$
$$L_{6}=\{(x,y)\in \R^{2} | 1\leq x \leq 0, y=x-1\},$$
$$L_{7}=\{(x,y)\in\R^{2} | -1 \leq y \leq 0, x=1\}.$$ 
This shape, call it $O$, has the same area as the square centered at the origin with side length 2. One can show the following about this shape fairly simply: 
\begin{proposition}
	The shape $O$ has $t_{k}=k$ and $a_{k}=8k$. Thus, if $Q$ is the square, then $Z(O,s)=Z(Q,s)$. 
\end{proposition}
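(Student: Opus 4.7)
The plan is to argue directly that $t_O(m,n) \in \Z_{>0}$ for every lattice point $(m,n) \in \Z^2\ssm\{(0,0)\}$ and that exactly $8k$ such lattice points satisfy $t_O(m,n) = k$ for each integer $k \geq 1$. Once this is established, the Dirichlet series expression $Z_O(s) = \sum_k a_k t_k^{-2s}$ coincides term by term with the one already computed for the square in this section, so $Z(O,s) = Z(Q,s)$.

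The first step is to verify that $O$ is star-shaped with respect to the origin, so that $r_O \in C(\R/2\pi\Z,(0,\infty))$ is well defined and the setup of Section 1 applies. The only potentially problematic feature is the inward turn of $\partial O$ at the vertex $(1,0)$, but a direct check shows that every ray from the origin at angle $\theta \in [0,\arctan(1/2)]$ exits through $L_6$ (not through the line $x=1$), and the remaining angles are handled by inspection of the adjacent edges. The seven corners
\[
(1,-1),\ (1,0),\ (2,1),\ (1,1),\ (0,\tfrac{1}{2}),\ (-1,1),\ (-1,-1)
\]
thereby partition $\R/2\pi\Z$ into seven closed sectors on each of which $\partial O$ lies along a single line $\alpha x+\beta y=1$ with $\alpha,\beta \in \Z$. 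Substituting $(x,y) = r_O(\theta)(\cos\theta,\sin\theta)$ yields $r_O(\theta) = (\alpha\cos\theta+\beta\sin\theta)^{-1}$ on that sector, so for a lattice point $(m,n)$ with angle in the sector,
\[
t_O(m,n) = \frac{\sqrt{m^2+n^2}}{r_O(\theta(m,n))} = \alpha m + \beta n.
\]
Running through the sectors corresponding to the edges $L_7, L_6, L_5, L_4, L_3, L_2, L_1$, the formulas are respectively $m,\ m-n,\ n,\ 2n-m,\ 2n+m,\ -m,\ -n$, each of which is a positive integer on the relevant sector.

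For each $k\ge 1$ I then count the lattice points with $t_O(m,n) = k$ sector by sector. The five sectors supplying the formulas $m, n, -m, -n, m-n$ each contribute an arithmetic progression of $k+1$ or $2k+1$ points by direct inspection; the two slanted sectors supplying $2n-m$ and $2n+m$ contribute lattice points on the lines $m = \pm(2n-k)$, giving $k/2+1$ points if $k$ is even and $(k+1)/2$ points if $k$ is odd. Summing these seven contributions with multiplicity gives $8k+7$ when $k$ is even and $8k+6$ when $k$ is odd. The shared sector boundaries pass through the seven corners $(k,0), (2k,k), (k,k), (0,k/2), (-k,k), (-k,-k), (k,-k)$, each lying on two adjacent sectors; all seven are lattice points when $k$ is even, but $(0,k/2)$ is not when $k$ is odd, so the overcount to subtract is $7$ or $6$ respectively. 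In both parity cases the result is exactly $8k$ distinct lattice points with $t_O(m,n) = k$, and since this count is positive for every $k$, the ordered sequence of distinct $t_O$-values is $t_k = k$.

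The main obstacle is the parity bookkeeping: both the $L_3, L_4$-sector contributions and the overcount at the corner $(0,k/2)$ shift by one between even and odd $k$, and these two shifts must cancel to leave the same answer $8k$ in both cases. Once they do, $(t_k, a_k) = (k, 8k)$ for $O$ matches the pair already computed for $Q$, so $Z(O,s) = Z(Q,s)$ termwise.
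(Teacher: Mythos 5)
Your proof is correct and follows essentially the same elementary route as the paper --- a direct lattice-point count establishing $t_{k}=k$ and $a_{k}=8k$ --- except that you carry out in full the sector-by-sector bookkeeping (including the parity cancellation between the two slanted edges and the corner direction $(0,k/2)$) that the paper dismisses as "readily checked." The arithmetic checks out: the seven closed sectors contribute $8k+7$ (for $k$ even) or $8k+6$ (for $k$ odd) points with multiplicity, the double-counted corner directions number $7$ or $6$ accordingly, and so $a_{k}=8k$ in both cases, with $t_{k}=k$ following since every lattice point has an integer $t$-value and every positive integer occurs.
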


\begin{proof}
	It is clear that $t_{k}=k$, and it is also clear that this region is star-shaped as well. Now, the idea for the $a_{k}$ is that the vertical and horizontal lines account for the amount of lattice points as the square does, so it suffices to show that the "corner" and "bump" account for the same amount of lattice points. This is readily checked. 
\end{proof}

This sort of counterexample is fairly degenerate, but shows that if there is a conjecture to be made about the fibers of $r\mapsto Z_{r}(s)$, perhaps there need to be a stronger condition on the class of functions considered.	
	
\section{Final Remarks}

\subsection*{Further Study}
There are a number of open questions still about the Hlawka Zeta Function, but these results should lend some light on the situation. One might be tempted to pursue the following: \\
\tab (1): The methodology that was used to find interpret the functional equation for the ellipse may be able to find any sort of general functional equation. Namely, one can use the Fourier series of $r(\theta)$ in terms of $\cos^{n}(\theta)$ and $\sin^{n}(\theta)$, then use the extended binomial theorem and basis change to find the $4q$ Fourier coefficients of $r(\theta)^{s}$. This will involve even more, far more difficult combinatorics. Then, if $Z(r,s)$ has a functional equation $s\mapsto 1-s$, the functional equation of the $(E^{\sharp}.f_{q})(I,s)$ should say something about how the Fourier coefficients change. \\
\tab (2): While we have found an example where $Z_{r}(s)=Z_{r^{\ast}}(s)$ where $r^{\ast}$ is not in the $GL(2,\R)$-orbit of $r$, this does not disprove our conjecture about the stabilizer of the induced action of $GL(2,\R)$ on $Z_{r}(s)$, $g\cdot Z_{r}(s)=Z_{g\cdot r}(s)$. This conjecture is still open. Furthermore, we have reason to believe that the following conjecture may be true: 
\begin{conjecture}
	Let $r\in C^{1}(\R/2\pi\Z,(0,\infty))$, i.e. $r$ is continually differentiable. Then $Z_{r}(s)=Z_{r^{\ast}}(s)$ implies that $r^{\ast}$ is in the $GL(2,\R)$ orbit of $r$. 
\end{conjecture}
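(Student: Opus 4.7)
The plan is to leverage the Eisenstein series decomposition of $Z_r(s)$ proved in Theorem 11 and refined in Corollary 17 together with the $C^1$ regularity of $r$ to recover $r$ from $Z_r$ up to the $GL(2,\R)$-action. The heart of the strategy is that $Z_r(s)$ encodes, through its expansion $Z_r(s)=\sum_{q} \wh{r^{2s}}(4q)(E^{\sharp}.f_{4q})(I,s)$, a great deal of Fourier-analytic data about $r$, and one should try to extract this data from $Z_r$ alone and then invert the assignment $r\mapsto\{\wh{r^{2s}}(4q)\}$ modulo the $GL(2,\R)$-orbit.

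First, I would argue that the Eisenstein series $(E^{\sharp}.f_{4q})(I,s)$, as $q$ varies, are linearly independent over the field of meromorphic functions. The functional equation proved in Theorem 30 shows that the gamma-factor ratio $\Gamma(s)^2/\Gamma(s+2q)\Gamma(s-2q)$ depends essentially on $q$, producing distinct pole/zero patterns (for instance, the trivial zeros of $(E^{\sharp}.f_{4q})(I,1-s)$ at positive integers $\le 2|q|$). Via a residue calculation along sequences of $s$ in the critical strip, or by exploiting the Mellin asymptotics coming from Perron's formula, one ought to be able to isolate each coefficient $\wh{r^{2s}}(4q)$ from $Z_r(s)$. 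Combining this with the hypothesis $Z_r=Z_{r^*}$ then yields $\wh{r^{2s}}(4q)=\wh{(r^*)^{2s}}(4q)$ for all $q\in\Z$ as identities of meromorphic functions of $s$.

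Second, I would specialize $s$ to positive integers. For $s=n\in\Z_{>0}$, the identity reads
\begin{equation*}
\int_0^{2\pi} r(\theta)^{2n}e^{-4iq\theta}\,d\theta=\int_0^{2\pi} r^*(\theta)^{2n}e^{-4iq\theta}\,d\theta
\end{equation*}
for all $n$ and all $q\in\Z$. Setting $\rho=r^2,\ \rho^*=(r^*)^2$, one obtains identical generating data for all $n$-th powers of $\rho$ and $\rho^*$ against the characters $e^{-4iq\theta}$. Because $r$ is $C^1$ and strictly positive, $\rho$ admits a rapidly convergent Fourier expansion and the moment sequence $\{\wh{\rho^n}(4q)\}_{n,q}$ determines $\rho$ up to a reparametrization preserving all Fourier modes whose index is not divisible by $4$. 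The reparametrizations in question are exactly those coming from rotations by multiples of $\pi/2$ together with the $GL(2,\R)$ action described in Proposition 26, since that action is the only natural way to permute the lattice of Fourier modes $\{e^{i4q\theta}\}$ while distorting the others.

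The main obstacle lies precisely in this last inversion step. The $C^1$ hypothesis is essential: it rules out the square/$O$-shape pathology, because in that counterexample the piecewise-linear boundary forced lattice points to align ``coincidentally'' at common radii without any interpolating smooth structure to detect the difference. For $C^1$ data one expects the boundary to be determined by much finer information, but proving that ``matching Fourier moments indexed by multiples of $4$ for every $s$'' implies ``same $GL(2,\R)$ orbit'' amounts to a new rigidity theorem for star-shaped regions with $C^1$ boundary. I would attempt this by first verifying it inside each $K$-orbit (where Proposition 26 already gives most of the information), then using the Cartan decomposition to reduce the general case to understanding the action of diagonal matrices $\bar a=\diag(a,1)$, where the formula at the end of Section 4 for $\bar a\cdot r$ gives an explicit change of variable and should allow matching the Fourier moments of $\rho$ and $\rho^*$ one degree at a time. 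Pushing this through rigorously — in particular, showing that the $C^1$ assumption prevents exotic reparametrizations from fitting the moment data — is where I expect the conjecture to demand genuinely new ideas.
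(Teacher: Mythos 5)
This statement is posed in the paper as an open \emph{conjecture}; the paper offers no proof, only the remark that it ``has basis in our examination of the asymptotics of $t_{k}$ and $a_{k}$.'' So there is nothing to compare your argument against, and what you have written is, as you yourself concede, a program rather than a proof. Two of its steps have genuine gaps. First, the extraction step: you want to recover each $\wh{r^{2s}}(4q)$ from $Z_{r}(s)=\sum_{q}\wh{r^{2s}}(4q)(E^{\sharp}.f_{4q})(I,s)$. This is an \emph{infinite} sum whose ``coefficients'' are themselves functions of $s$, so linear independence of the $(E^{\sharp}.f_{4q})(I,s)$ over the meromorphic functions, even if established, does not let you separate the terms. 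Moreover, the decomposition is only proved for $\Re(s)>1$, where every $(E^{\sharp}.f_{4q})(I,s)$ with $q\neq 0$ is holomorphic and the paper establishes no analytic continuation of $Z_{r}$ for general $r$; the residue and pole/zero arguments you invoke in the critical strip therefore have no region in which to operate. At best the simple pole of $(E^{\sharp}.f_{0})(I,s)$ at $s=1$ isolates the $q=0$ term near $s=1$ (i.e.\ the area of $D$), which is far from the full family of identities $\wh{r^{2s}}(4q)=\wh{(r^{\ast})^{2s}}(4q)$.

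Second, even granting those identities, your final inversion step misidentifies the ambiguity group. You claim the reparametrizations preserving the $4$-divisible Fourier moments of all powers of $\rho$ are ``exactly'' those coming from the $GL(2,\R)$ action of Proposition 26. But that action does \emph{not} preserve these moments: the circle has $\wh{\rho^{n}}(4q)=0$ for all $q\neq 0$ while its image under a shear, the ellipse, has all of them nonzero (this is precisely how Section 5 computes the ellipse's coefficients), and correspondingly $Z_{r}\neq Z_{g\cdot r}$ for generic $g$ by Conjecture 29. So ``same moment data'' is neither implied by nor implies ``same $GL(2,\R)$ orbit,'' and the step in which you pass from matching moments to membership in the orbit is not a reduction of the conjecture but a restatement of it (indeed of a different, inequivalent statement). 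The paper's square-versus-$O$ example shows the fiber of $r\mapsto Z_{r}$ can leave the orbit for merely continuous $r$; identifying where the $C^{1}$ hypothesis actually enters is the entire content of the problem, and your proposal does not yet supply that mechanism.
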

This conjecture has basis in our examination of the asymptotics of $t_{k}$ and $a_{k}$.

\subsection*{Acknowledgments}

\tab This research was under the tutelage of Joseph Hundley at SUNY Buffalo, who has pushed me ever-forward throughout the year long-process. I am grateful for the opportunity to work under and along with him through this time. The original definitions and motivation for this work is found in \cite{Huxley1}. Furthermore, this research was supported by the Summer Math Scholarship over the span of 2017. Without this support, the extent of results would not have been possible.

\bibliography{math}

\bibliographystyle{amsra}

\end{document}